\newtheorem{Theorem}{Theorem}[section]
\newtheorem{Prop}[Theorem]{Proposition} 
\newtheorem{Corollary}[Theorem]{Corollary} 
\newtheorem{Lemma}[Theorem]{Lemma}
\theoremstyle{definition} 
\newtheorem*{Definition}{Definition} 
\theoremstyle{remark} 
\newtheorem*{Remark}{Remark} 
\newtheorem*{Example}{Example}
\renewcommand{\d}{\partial }
\newcommand{\rr}{\mathbb{R}}
\newcommand{\zz}{\mathbb{Z}}
\newcommand{\nn}{\mathbb{N}}
\newcommand{\cS}{\mathcal{S}}
\newcommand{\cC}{\mathcal{C}}
\newcommand{\cH}{\mathcal{H}}
\newcommand{\cD}{\mathcal{D}}
\DeclareMathOperator\diam{diam}
\DeclareMathOperator\myc{C}
\DeclareMathOperator\myh{H}
\DeclareMathOperator\Hom{Hom}
\DeclareMathOperator\Ext{Ext}
\DeclareMathOperator\Tor{Tor}
\newcommand{\symdiff}{\mathbin{\triangle}}
\newcommand{\C}[1][*]{\myc^{#1}}
\newcommand{\Cx}[1][*]{\myc{\mathrm X}^{#1}}
\newcommand{\cx}[1][*]{\myc{\mathrm X}_{#1}}
\newcommand{\cs}[1][*]{\myc_{#1}^{\mathrm s}}
\newcommand{\clf}[1][*]{\myc_{#1}^{lf}}
\newcommand{\cf}[1][*]{\myc_{#1}}
\renewcommand{\H}[1][*]{\myh^{#1}}
\newcommand{\Hx}[1][*]{\myh{\mathrm X}^{#1}}
\newcommand{\hx}[1][*]{\myh{\mathrm X}_{#1}}
\newcommand{\hlf}[1][*]{\myh_{#1}^{\mathrm lf}}
\newcommand{\h}[1][*]{\myh_{#1}}
\newcommand{\rH}[1][*]{\tilde{\myh}^{#1}}
\newcommand{\rh}[1][*]{\tilde{\myh}_{#1}}
\newcommand{\ga}{\alpha }
\newcommand{\gs}{\sigma }
\newcommand{\gt}{\tau}
\newcommand{\gr}{\rho}
\newcommand{\gD}{\Delta}
\newcommand{\gd}{\delta}
\newcommand{\gk}{\kappa}
\newcommand{\gl}{\lambda}
\newcommand{\go}{\omega}
\renewcommand{\ge}{\varepsilon}
\let\oldsubset\subset
\renewcommand{\subset}[1][]{\mathrel {\overset{\scriptscriptstyle #1}{\oldsubset}}}
\let\oldin\in
\renewcommand{\in}[1][]{\overset{#1}{\oldin}}
\let\oldnotin\notin
\renewcommand{\notin}[1][]{\overset{#1}{\oldnotin}}
\DeclarePairedDelimiter\abs{\lvert}{\rvert}
\newcommand{\Supp}[2][]{\abs{#2}_{#1}}
\newcommand{\SupP}[1]{\abs*{#1}}
\NewDocumentCommand\ccap{o}{\mathbin{\overset{\scriptscriptstyle \mathrm{c}
	\IfNoValueTF{#1}{} {, #1} }
	{\cap} } }
\NewDocumentCommand\cminus{o}{\mathbin{\oset{\mathrm{c}
	\IfNoValueTF{#1}{} {, #1} }
	{-} } }	
\NewDocumentCommand\ceq{o}{\mathbin{\overset{\scriptscriptstyle \mathrm{c}
	\IfNoValueTF{#1}{} {, #1} }
	{=} } }	
	\NewDocumentCommand\cneq{o}{\mathbin{\overset{\scriptscriptstyle \mathrm{c}
	\IfNoValueTF{#1}{} {, #1} }
	{\neq} } }	
\NewDocumentCommand\csubset{o}{\mathrel{\subset[\mathrm{c}\IfNoValueTF{#1}{} {, #1} ]}}
\NewDocumentCommand\he{o}{\mathbin{\overset{ \scriptscriptstyle{#1}}
	{\simeq} } }	
\newcommand{\oset}[3][0ex]{
 \mathrel{\mathop{#3}\limits^{
  \vbox to#1{\kern 0\ex@
  \hbox{$\scriptscriptstyle#2$}\vss}}}}
\newcommand{\cupp}{\mathbin{\smile}}
\newcommand{\capp}{\mathbin{\frown}}
\title{Coarse cohomology of the complement}
\author{Arka Banerjee}
\address{Department of Mathematics and Statistics\\
Auburn University\\
Auburn, AL~36849}
\email{azb0263@auburn.edu}
\author{Boris Okun}
\address{Department of Mathematical Sciences\\
University of Wisconsin--Milwaukee\\
Milwaukee, WI~53201}
\email{okun@uwm.edu}
\begin{document}
\begin{abstract}
    In this paper we define the coarse (co)homology of the complement of a subspace in a metric space, generalizing the coarse (co)homology of Roe.
    We give a model space which encodes coarse geometric structure of the complement.
    We also introduce a new approach to coarse Poincaré duality spaces.
    We prove a version of coarse Alexander duality for these spaces and give a homological criterion for a space to be a coarse PD($n$) space.
    Our approach is inspired by the work of Kapovich and Kleiner, but is somewhat different, and we believe, simpler.
\end{abstract}

\maketitle

\section{Introduction}

The goal of this paper is to recast the coarse Alexander duality theorems of Kapovich and Kleiner \cite{kk05} into the framework of the coarse homology and cohomology theories of Roe~\cites{r93, r03}.
Recall that for $A$, a subcomplex of $S^{n}$, the classical Alexander duality is given by the isomorphisms $\rH(S^{n} -A ) \cong \rh[n-1-*](A)$ and $\rh(S^{n} -A ) \cong \rH[n-1-*](A)$.
The results of \cite{kk05} are similar in spirit in the context of subsets of coarse Poincaré duality spaces, but their actual statements are quite technical.
Our main result is the following theorem (Theorem \ref{t:pd}):
\begin{Theorem}
    If $X$ is a coarse PD($n$) space, then for any $A\subset X$
    \begin{align*}
        \Hx[k](X-A) &\cong \hx[n-k](A), \\
        \hx[k](X-A) &\cong \Hx[n-k](A).
    \end{align*}
\end{Theorem}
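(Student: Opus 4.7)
The plan is to deduce the isomorphism from the coarse Poincaré duality of $X$ itself, using cap product with the coarse fundamental class of $X$ as the bridge between the two sides.

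Since $X$ is a coarse PD($n$) space, there is a coarse fundamental class $[X]$, and cap product with $[X]$ yields a chain equivalence inducing the absolute duality $\H[k](X) \cong \h[n-k](X)$. The key observation is that this cap product should be compatible with the support conditions that define the complement theories: a coarse cochain whose support controls it near $X-A$ (the data underlying $\Hx[k](X-A)$) caps with $[X]$ to produce a coarse chain whose support controls it near $X \setminus (X-A) = A$ (the data underlying $\hx[n-k](A)$). My first step would therefore be to construct a chain map
\[
\Cx[k](X-A) \xrightarrow{\,[X] \capp - \,} \cx[n-k](A)
\]
refining the absolute PD cap product, verifying that propagation constants and support data are correctly translated.

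To show this chain map is a quasi-isomorphism, I would build a ladder of long exact sequences. From the model-space construction of the earlier sections, both $\Hx(X-A)$ and $\hx(A)$ fit into long exact sequences comparing them to the absolute theories $\H(X)$, $\h(X)$, $\H(A)$ and $\h(A)$, roughly of the shape
\[
\cdots \to \Hx[k](X-A) \to \H[k](X) \to \H[k](A) \to \Hx[k+1](X-A) \to \cdots
\]
in cohomology, and the dual shape in homology. Cap product with $[X]$ assembles these into a ladder whose columns over $\H[k](X)$ and $\h[n-k](X)$ are isomorphisms by the absolute coarse Poincaré duality. The remaining columns, involving the theories on $A$, match up after reindexing, and the five-lemma then delivers $\Hx[k](X-A) \cong \hx[n-k](A)$.

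The main obstacle will be the first step: verifying at the chain level that cap product with $[X]$ really does translate the ``supported near $X-A$'' condition on cochains into the ``supported near $A$'' condition on chains, with compatible propagation constants. Once this chain-level compatibility is established, the ladder diagram and the five-lemma argument are essentially formal. The second isomorphism $\hx[k](X-A) \cong \Hx[n-k](A)$ follows by the mirror argument, swapping the roles of chains and cochains (equivalently, evaluating the duality pairing in the other variable).
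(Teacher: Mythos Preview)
Your instinct that support compatibility drives the argument is right, but the plan is miscalibrated in two places, and the paper's route is more direct.

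First, the support bookkeeping is inverted. By Lemma~\ref{X-A}, a cochain $\phi\in\Cx[k](X-A)$ satisfies $\Supp{\phi}\ccap\gD\csubset\gD_A$: its near-diagonal support is coarsely contained \emph{in $A$}, not in $X-A$. The duality map therefore does not swap ``near $X-A$'' for ``near $A$''; it \emph{preserves} ``near $A$''. This is exactly what the control conditions in the definition of a coarse PD($n$) space encode: from $\Supp{p(\phi)}\csubset\Supp{\phi}$ and $\Supp{p(\phi)}\csubset\gD$ one gets $\Supp{p(\phi)}\csubset\Supp{\phi}\ccap\gD\csubset\gD_A$, hence $p(\phi)\in\cx[n-k](A\subset X)$.

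Second, the long exact sequence you invoke is not set up in the paper and does not exist as written. Since $\Cx(X)\subset\Cx(X-A)$ (the condition ``bounded'' is stronger than ``$\csubset\gD_A$''), the natural map in cohomology runs $\Hx(X)\to\Hx(X-A)$, opposite to your arrow, and there is no short exact sequence of complexes whose third term is $\Cx(A)$. The five-lemma ladder has nothing to stand on.

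The paper bypasses all of this. It works with the abstract maps $p,q$ and homotopies $F,G$ from the definition (not specifically cap product; that identification comes only later, in Theorem~\ref{t:op}). Lemma~\ref{l:res} verifies, by precisely the support calculation above, that all four maps restrict to the subcomplexes $\Cx(X-A)\leftrightarrow\cx[n-*](A\subset X)$ and $\Cx(A\subset X)\leftrightarrow\cx[n-*](X-A)$. Since the homotopies $F,G$ restrict as well, $p$ and $q$ remain chain homotopy inverses on these subcomplexes, and Lemma~\ref{l:equiv1} trades $\cx(A\subset X)$ for $\cx(A)$. In short, what you flag as ``the main obstacle, the first step'' is---once the support direction is corrected---the whole proof; the ladder and five-lemma are neither needed nor available.
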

Our main innovation is in the definition of the left-hand sides of these formulas, and in fact in the introduction of a notion of the coarse complement.

One might think naively that, since in the coarse geometry one is allowed to replace any subset with its metric neighborhood, the coarse complement $X-A$ should be the complement of a (perhaps bigger and bigger) metric neighborhood, $X-N_r(A)$.
If $A$ is bounded, this is indeed the right picture, as all these complements are coarsely equivalent to each other (and in fact to $X$).
However, for unbounded subsets it is not a right notion.
For example, if $X$ is a Euclidean plane and $A$ is a line in $X$, then $X-N_r(A)$ are still all coarsely equivalent to $X$, contrary to the idea that a line coarsely separates a plane, and therefore the coarse topology should change.

Instead, we consider complements of \emph{expanding} neighborhoods $X-N_f(A)$, where $N_{f}(A)= \bigcup_{x\in A} B(x, f(x))$ for proper functions $f:A\to [0, \infty)$.
These complements form a directed system of metric spaces under inclusion (in the direction of slower growth of $f$), and we define coarse (co)chain complexes of the complement $X-A$ as the appropriate limits of the usual coarse complexes of these complements.

Our second observation, inspired by~\cite{msw11}, is that almost all relevant information is captured by the notion of \emph{coarse containment}.
We write $A\csubset B$ to mean that $A \subset N_{r}(B)$ for some $r$.
In some sense coarse geometry is the study of the poset of the equivalence classes of subsets of $X$ induced by this relation.
From this point of view the coarse complement $X\cminus A$ is the collection of classes of subsets which are coarsely disjoint from $A$, i.e.
have bounded intersection with any metric neighborhood of $A$.
This leads us to a very explicit description of the limiting complexes in terms of support, which is very similar to Roe's definition of the coarse cochains.
This description is key to our proof of Alexander duality.

Moreover, this description allows us to identify the coarse type of $X\cminus A$ with the coarse type of the metric space $(X/\Bar{A}, d_{A})$ whose metric comes from the pseudometric $d_A(x, y) = \min\{d(x, A)+d(y, A), d(x, y)\}$ on $X$.
This identification can be helpful in computations of $\Hx(X-A)$.

Our definition of a coarse PD($n$) space is a slight generalization of the definition of Kapovich and Kleiner, in terms of controlled chain homotopy equivalence between coarse chain and cochain complexes.
We do not require bounded geometry or simplicial structure. 
The classical Poincaré duality maps on manifolds are given by the cap product with the fundamental class in one direction and the slant product with the orientation class in the other.
We define a coarse version of these classes (called \emph{orientation pair}) and prove a similar statement for coarse PD($n$) spaces (Theorem~\ref{t:op}):
\begin{Theorem}
    $X$ is a coarse PD($n$) space if and only if it has an orientation pair.
\end{Theorem}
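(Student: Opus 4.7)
The plan is to establish the equivalence by extracting chain-level data in each direction, paralleling the classical fact that Poincaré duality is equivalent to the existence of a fundamental class whose cap product is an isomorphism. The forward direction is essentially a matter of producing canonical generators from the duality chain homotopy equivalence, while the reverse direction requires building the duality maps out of the orientation pair by cap and slant products and verifying that they are controlled chain homotopy inverses.

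For the ``only if'' direction, I would start from the controlled chain homotopy equivalence $\Phi\colon \Cx[*](X) \to \cf[n-*](X)$ (and its inverse $\Psi$) provided by the coarse PD($n$) hypothesis. Applying these maps to distinguished elements — a generator of $\Cx[0](X)$ on one side and of $\cf[n](X)$ on the other — I would extract the two components of the orientation pair: the fundamental class, which is the image of a coarse $0$-cochain (in the sense of a ``unit'' in $\Cx[0]$), and the orientation class, obtained dually. The compatibility conditions of an orientation pair (expressing that the two classes are dual via evaluation, and that they satisfy cap/slant identities) should unpack directly from the chain homotopy data $\Phi\Psi\simeq \mathrm{id}$ and $\Psi\Phi\simeq \mathrm{id}$, since these are exactly the statements that the extracted classes implement the duality on specific cycles and cocycles.

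For the ``if'' direction, given an orientation pair $(\theta,\omega)$, define one duality morphism by cap product with $\theta$, i.e.\ $\,\cdot\capp\theta\colon \Cx[k](X)\to \cf[n-k](X)$, and the other by slant product with $\omega$, $\,\cdot/\omega\colon \cf[k](X)\to \Cx[n-k](X)$. The verification then proceeds in three steps: (i) both products are well-defined as morphisms of the coarse (co)chain complexes — this is the bookkeeping of supports showing that $\theta$ and $\omega$, being suitably controlled, do not destroy propagation; (ii) they are chain maps, which is the usual Leibniz formula combined with $\theta$ being a cycle and $\omega$ a cocycle; and (iii) the two compositions are chain homotopic to the identity, which is extracted from the compatibility relation between $\theta$ and $\omega$ in the orientation pair, together with the standard algebraic identity relating iterated cap and slant products with a diagonal class.

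The main obstacle will be the control-theoretic part of step~(iii) in the reverse direction. Ordinary cap and slant products can enlarge supports in ways that are invisible in the classical algebraic setting but fatal in the coarse setting; one must use the fact that the orientation pair comes with definite propagation bounds, and then produce explicit chain homotopies — typically built from a ``prism'' construction on the diagonal — whose propagation can be estimated. Assembling such a controlled prism, so that it converges and lives in the appropriate coarse complex rather than merely in an algebraic closure of it, is where the substantive work lies, and where the argument departs most sharply from its ordinary-topological model.
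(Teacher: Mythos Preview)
Your overall plan is the same as the paper's, but two points need tightening.

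First, the complexes in the definition of coarse PD($n$) are $\C(X)$ (all Alexander--Spanier cochains) and $\cx(X)$ (coarse chains), not $\Cx(X)$ and $\cf(X)$ as you write. This matters in both directions. In the forward direction you want $c=p(1)$ where $1\in\C[0](X)$ is the constant function --- this is \emph{not} in $\Cx[0](X)$ for unbounded $X$, so your ``unit in $\Cx[0]$'' does not exist. The diagonal class is then built explicitly from $q$ by the formula $\phi(\gs\otimes\gt)=\pm q(\gt)(\gs)$; the support condition on $q$ is exactly what forces $\phi\in\Cx[n](X\otimes X-\gd(X))$, and the orientation-pair identity $\phi(x\otimes c)=qp(1)(x)=1$ drops out. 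In the reverse direction the cap and slant maps are $\cdot\capp c:\C[k](X)\to\cx[n-k](X)$ and $\phi/\cdot:\cx[k](X)\to\C[n-k](X)$, with the cap landing in coarse chains, not finitely supported ones.

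Second, your ``prism construction on the diagonal'' is reaching toward the right difficulty but not the right mechanism. The paper's route through step~(iii) is: rewrite $(\phi/a)\capp c$ as $p_{1*}(\phi\capp(c\otimes a))$ via Spanier's cap/slant identity, then use the controlled chain homotopy between the factor-swap $T_*$ and the identity on $\cx(\gd(X)\subset X\otimes X)$ (assembled from Alexander--Whitney and the simplicial cross product) to pass to $p_{1*}(T^*\phi\capp(a\otimes c))=(T^*\phi/c)\capp a$. The final ingredient is the observation that $T^*U$ is again a diagonal class for $[X]$, so $T^*\phi/c=\pm 1$ and the composite is the identity. The cochain-side composition is handled symmetrically using the cup product and the analogous $T^*$-homotopy. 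So the controlled homotopy you need is not a generic prism but specifically the one witnessing $T_*\simeq\mathrm{id}$ near the diagonal, together with the symmetry $U\leftrightarrow T^*U$; without naming this you have not yet isolated why the two compositions collapse to the identity rather than merely to some endomorphism.
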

As a corollary, we obtain that uniformly acyclic topological manifolds are coarse PD($n$) spaces, a similar statement is claimed in~\cite{kk05} with only a sketch of the proof.
\begin{Remark}
    This theorem, as well as the Alexander duality, indicate that the term ``coarse PD($n$) space'' is perhaps a misnomer, a more appropriate name would be ``coarse homology manifold''.
\end{Remark}
\subsection{Overview} This paper is organized as follows.
In Section~\ref{s:prelim}, we introduce coarse language and state basic results about coarse containment.
We also describe several variations of Alexander--Spanier cochains and the corresponding cohomology theories.

Section~\ref{s:cohomology complements} contains our main definition of the coarse (co)homology of a complement.
The main result there is Lemma~\ref{X-A}, which gives an equivalent description of the cochain complex.
In Section~\ref{s:complements}, we discuss a more geometric version of our approach.
In Section~\ref{s:separation} we discuss the notion of coarse separation and show that elements in $\Hx[1](X-A;\zz_2)$ corresponds to different ways in which $A$ coarsely separates $X$.
In Section~\ref{s:PD} we introduce the notion of a coarse Poincaré duality space, and prove the main result of this paper, the coarse Alexander Duality, Theorem~\ref{t:pd}.
A standard argument then implies a coarse version of the Jordan separation theorem.
We study coarse versions of the standard (co)homological products in Section~\ref{s:products} and use them in Section~\ref{s:alg pd} to derive a homological criterion for a space to be a coarse PD($n$) space, and to prove that uniformly acyclic $n$-manifolds are coarse PD($n$) spaces.
Finally, in Section~\ref{s:Algebraic duality} we show that (under a mild set-theoretic assumption) integral coarse chains and coarse cochains are dual of each other in the sense of taking $\Hom( \cdot, \zz)$.
\subsection{Acknowledgements} We thank Kevin Schreve for many helpful conversations and valuable comments on the early draft of the paper.
The second author would like to thank the Max Planck Institute for Mathematics for its support and excellent working conditions.

\section{Preliminaries}\label{s:prelim}
\subsection{Coarse containment} We adopt the notation from~\cite{msw11}.
Let $(X, d)$ be a metric space.
For $A \subset X$ denote $N_{R}(A)=\{x \in X \mid d(x, A) <R \}$.
We will call such neighborhoods \emph{metric neighborhoods} of $A$.
We will say that $A$ is \emph{$R$-contained} in $B$, $A \subset[R] B$, if $A \subset N_{R}(B)$.
$A$ is \emph{coarsely contained} in $B$, $A \csubset B$, if $A \subset[R] B$ for some $R$.
Two subsets are \emph{coarsely equal}, $A \ceq B$, if $A \csubset B$ and $B \csubset A$.
All bounded subsets are obviously coarsely equal, and we declare the empty set to be coarsely equal to bounded subsets.
Formally, this requires special casing some of the metric arguments in the paper.
A way to avoid doing this is to add one extra point to $X$ and all its subsets and extend the metric.
We will simply ignore this annoyance.

We will use the following observation about the intersection of metric neighborhoods in later sections:
\begin{Lemma}\label{l:nbds}
    Let $A$, $B$ and $C$ be subsets of metric space $X$.
    If $C \subset[R] A$ and $C \subset[r] B$, then
    \[
    C \subset[R] A \cap N_{R+r}(B).
    \]
\end{Lemma}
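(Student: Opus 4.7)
The plan is to prove the inclusion pointwise by a direct triangle inequality argument, with essentially no extra machinery needed.

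Fix an arbitrary point $c \in C$. By the hypothesis $C \subset[R] A$, there exists some $a \in A$ with $d(c, a) < R$. By the hypothesis $C \subset[r] B$, there also exists some $b \in B$ with $d(c, b) < r$. The point $a$ is already our desired witness: it lies in $A$, and the triangle inequality gives
\[
d(a, b) \le d(a, c) + d(c, b) < R + r,
\]
so $a \in N_{R+r}(B)$. Hence $a \in A \cap N_{R+r}(B)$ and $d(c, a) < R$, which shows $c \in N_R(A \cap N_{R+r}(B))$.

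Since $c \in C$ was arbitrary, this establishes $C \subset N_R(A \cap N_{R+r}(B))$, i.e., $C \subset[R] A \cap N_{R+r}(B)$. There is no substantive obstacle here; the only subtle point is recognizing that the same witness $a$ already used for $C \subset[R] A$ can be reused, rather than trying to construct a new point in the intersection. The bounded/empty-set convention flagged earlier in the section does not require special handling, since the argument is purely existential in the points of $A$ and $B$ and is vacuous if $C$ is empty.
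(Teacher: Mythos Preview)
Your proof is correct and is essentially identical to the paper's own proof: both pick a point of $C$, take a witness $a\in A$ within distance $R$, and use the triangle inequality with a witness in $B$ to see that $a\in N_{R+r}(B)$. The paper's version is slightly terser (it does not explicitly name the point $b$), but the argument is the same.
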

\begin{proof}
    If $x \in C$, then there exist $y \in A$ such that $d(x, y) < R$.
    Therefore $y \in A \cap N_{R+r}(B)$, and hence $x \in N_R(A \cap N_{R+r}(B))$ by the triangle inequality.
\end{proof}

\subsection{Coarse intersection}
Recall from~\cite{msw11} the notion of \emph{coarse intersection}: $A \ccap B \ceq C$ if for all sufficiently large $R$, $N_{R}(A) \cap N_{R}(B) \ceq C$.
The coarse intersection is not always well-defined, it may happen that the coarse type of $N_{R}(A) \cap N_{R}(B) $ does not stabilize as $R$ goes to infinity.
However the notion ``coarse intersection is coarsely contained in'' is well-defined.
$A \ccap B \csubset C$ means that for any $R$, $N_{R}(A) \cap N_{R}(B) \csubset C$.
It is immediate from Lemma~\ref{l:nbds} that this condition is equivalent to the condition that for any $R$, $A \cap N_{R}(B) \csubset C$.

The following Lemma shows that coarse containment and intersection behave as expected.
\begin{Lemma}\label{l:subcap}
    Let $A$, $B$, $C$ and $D$ be subsets of metric space $X$.
    If $A \csubset B$, $A \csubset C$, and $B \ccap C \csubset D$, then $A \csubset D$.
\end{Lemma}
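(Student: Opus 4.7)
The plan is to unpack the definitions into explicit constants and then combine Lemma~\ref{l:nbds} with the equivalent reformulation of coarse intersection containment that the text provides just before the statement (namely, $B \ccap C \csubset D$ is equivalent to: for every $R$, $B \cap N_R(C) \csubset D$).

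Concretely, I would proceed as follows. First, choose $R$ with $A \subset[R] B$ and $r$ with $A \subset[r] C$, which exist by the two coarse containment hypotheses. Next, apply Lemma~\ref{l:nbds} (with $A$ in the role of $C$, $B$ in the role of $A$, and $C$ in the role of $B$) to obtain
\[
A \subset[R] B \cap N_{R+r}(C).
\]
Now invoke the equivalent formulation of $B \ccap C \csubset D$ with the constant $R+r$: there exists $T$ such that
\[
B \cap N_{R+r}(C) \subset[T] D.
\]
Composing the two containments via the triangle inequality gives $A \subset[R+T] D$, which is exactly $A \csubset D$.

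This is essentially a bookkeeping argument, so I do not expect a genuine obstacle. The only subtle point is to use the ``$B \cap N_R(C)$'' version of coarse intersection containment rather than the symmetric ``$N_R(B)\cap N_R(C)$'' version; the former meshes directly with the conclusion of Lemma~\ref{l:nbds}, which only thickens the second set. Using the symmetric version would still work but would require an extra step to replace $B$ with $N_s(B)$ for some $s$ and then absorb $s$ into the final constant.
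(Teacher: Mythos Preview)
Your proof is correct. The paper's argument is essentially the same unpacking of definitions, but it is even shorter: it takes a single $R$ with $A\subset N_R(B)\cap N_R(C)$ and applies the \emph{symmetric} form of $B\ccap C\csubset D$ directly to get $N_R(B)\cap N_R(C)\subset N_{R'}(D)$, hence $A\subset N_{R'}(D)$. No appeal to Lemma~\ref{l:nbds} is needed, so your closing remark is backwards --- the symmetric route is the one-step version, while your asymmetric route via Lemma~\ref{l:nbds} is the slight detour.
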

\begin{proof}
    By hypothesis, $A\subset N_R(B)$ and $A\subset N_R(C)$ for some $R$.
    Also we have $N_R(B)\cap N_R(C)\subset N_{R'}(D)$ for some $R'$.
    It follows that $A\subset N_{R'}(D)$.
\end{proof}
A subset $C$ is \emph{coarsely disjoint} from $A$ if $A \ccap C \ceq *$, in other words if $A \cap N_{R}(C)$ is bounded for all $R$.
Note that coarse disjointness is independent of the ambient space, it depends only on the metric on $A\cup C$.

\subsection{Coarse difference} We will write $C \csubset B \cminus A $ to mean that $C \csubset B$ and $A \ccap C \ceq *$.
One should think of the coarse difference $B \cminus A $ as a collection of subsets, it rarely has a well-defined coarse type in $X$.
Nevertheless, the notion of coarse containment between differences is well-defined.
We will write $( D \cminus C ) \csubset ( B \cminus A )$ to mean that $Y \csubset D \cminus C $ implies $Y \csubset B \cminus A $.
Unwinding the definitions, this really means that any subset of any metric neighborhood of $D$ which has bounded intersection with $C$ is contained in some metric neighborhood of $B$ and has bounded intersection with $A$.
\begin{Lemma}\label{l:complement}

    $B \ccap D \csubset A$ if and only if $(D \cminus A ) \csubset ( D \cminus B) $.
\end{Lemma}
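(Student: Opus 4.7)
The plan is to prove the two implications separately; the forward direction is a direct manipulation using Lemma~\ref{l:nbds}, while the reverse direction proceeds by contradiction with a carefully constructed witness set $Y$.

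For the forward direction, assume $B \ccap D \csubset A$, and let $Y \csubset D \cminus A$, i.e.\ $Y \csubset D$ and $A \ccap Y \ceq *$. I need to show $B \ccap Y \ceq *$. Choose $r$ with $Y \subset N_r(D)$ and fix any $R$. Then
\[
B \cap N_R(Y) \subset B \cap N_{R+r}(D),
\]
and the hypothesis $B \ccap D \csubset A$ gives $B \cap N_{R+r}(D) \subset N_{R'}(A)$ for some $R'$. So the set $C := B \cap N_R(Y)$ satisfies $C \subset[R'] A$ and $C \subset[R] Y$; Lemma~\ref{l:nbds} yields $C \subset[R'] A \cap N_{R'+R}(Y)$. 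Since $A \ccap Y \ceq *$, the set $A \cap N_{R'+R}(Y)$ is bounded, and hence so is $C$. Thus $Y \csubset D \cminus B$.

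For the reverse direction, suppose $B \ccap D \not\csubset A$. Then there is some $R$ for which $B \cap N_R(D) \not\csubset A$, so I can extract a sequence $b_n \in B \cap N_R(D)$ with $d(b_n, A) \geq n$. Set $Y := \{b_n\}$. Clearly $Y \subset N_R(D)$, so $Y \csubset D$. For any $r$, if $a \in A \cap N_r(Y)$ then $d(a, b_n) < r$ for some $n$, forcing $n \leq d(b_n, A) < r$; hence $A \cap N_r(Y)$ is contained in the union of the finitely many balls $B(b_n, r)$ with $n < r$, and is therefore bounded. So $Y \csubset D \cminus A$. By hypothesis, $Y \csubset D \cminus B$, which means $B \cap N_1(Y)$ is bounded. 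But $Y \subset B \cap N_1(Y)$, so $Y$ itself must be bounded, contradicting $d(b_n, A) \to \infty$.

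The forward direction is essentially a formal consequence of Lemma~\ref{l:nbds}, so the only real content is in the reverse direction: one must produce a set $Y$ that is coarsely contained in $D$, coarsely disjoint from $A$, yet sits inside $B$ densely enough that coarse disjointness from $B$ would contradict its divergence from $A$. Taking the witness to be a sequence in $B \cap N_R(D)$ whose distances to $A$ blow up accomplishes exactly this, and I do not anticipate any further obstacle.
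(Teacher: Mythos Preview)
Your proof is correct and follows essentially the same approach as the paper: the paper dismisses the forward direction as obvious (you spell it out via Lemma~\ref{l:nbds}), and for the converse it likewise produces a sequence going away from $A$ as the witness $Y$, the only cosmetic difference being that the paper takes the sequence in $D \cap N_R(B)$ rather than in $B \cap N_R(D)$.
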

\begin{proof}
    The forward direction is obvious.
    To prove the converse, suppose $B \ccap D \not\csubset A$, then the intersection $D$ and some metric neighborhood of $B$ contains a sequence of points going away from $A$.
    This sequence is in $D \cminus A $ but not in $D \cminus B$.
\end{proof}
\subsection{Coarse maps} 
Suppose $f:X\to Y$ is a map between metric spaces.
$f$ is \emph{proper} if $f^{-1}(B)$ is bounded in $X$ whenever $B$ is bounded in $Y$.
$f$ is \emph{coarse} if it is proper and has an upper control: there exists a function $\rho:[0, \infty)\to [0, \infty)$, so that
\[
d(f(x), f(y))\leq \rho(d(x, y)) \quad \text{for all } x, y\in X.
\]
$f$ is a \emph{coarse embedding} if there exists two proper functions $\rho_-,\rho_+:[0,\infty) \to [0,\infty)$ such that
\[
\rho_-(d(x,y))\leq d(f(x),f(y))\leq \rho_+(d(x,y)).
\]
$f$ is a \emph{coarse equivalence} if it is a coarse embedding and $Y\csubset f(X)$.

Two maps $f, g : X\to Y$ are \emph{close} if $d(f(x), g(x))$ is uniformly bounded.
It is easy to see that a coarse map $f:X \to Y$ is a coarse equivalence if and only if it has a coarse inverse: i.e.
if there exists a coarse map $g:Y \to X$, such that the compositions $fg$ and $gf$ are close to the identities.

The following lemma is immediate from the definitions.
\begin{Lemma}\label{l:uc}
    Maps with an upper control preserve the coarse containment, and the preimages of coarsely disjoint sets under a coarse map are coarsely disjoint.
\end{Lemma}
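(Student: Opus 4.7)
The plan is to unwind the two definitions directly; no machinery beyond the defining inequalities is needed.

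For the first claim, suppose $f\colon X\to Y$ has upper control $\rho$, and $A\csubset B$ in $X$, so $A\subset[R] B$ for some $R$. For any $a\in A$ choose $b\in B$ with $d(a,b)<R$; then $d(f(a),f(b))\le \rho(R)$, so $f(a)\in N_{\rho(R)}(f(B))$. Hence $f(A)\subset[\rho(R)] f(B)$, which gives $f(A)\csubset f(B)$.

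For the second claim, let $f\colon X\to Y$ be coarse (proper with upper control $\rho$) and let $A,C\subset Y$ be coarsely disjoint, i.e.\ $A\cap N_R(C)$ is bounded for every $R$. Fix $R>0$ and take any $x\in f^{-1}(A)\cap N_R(f^{-1}(C))$. Then $f(x)\in A$, and there is $y\in f^{-1}(C)$ with $d(x,y)<R$, so $d(f(x),f(y))\le \rho(R)$ and $f(y)\in C$, giving $f(x)\in A\cap N_{\rho(R)}(C)$. Thus
\[
f\bigl(f^{-1}(A)\cap N_R(f^{-1}(C))\bigr)\subset A\cap N_{\rho(R)}(C),
\]
which is bounded by coarse disjointness. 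Since $f$ is proper, the preimage of a bounded set is bounded, so $f^{-1}(A)\cap N_R(f^{-1}(C))$ is bounded. This holds for all $R$, so $f^{-1}(A)$ and $f^{-1}(C)$ are coarsely disjoint.

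There is no real obstacle here; the lemma is a direct consequence of the definitions of upper control, properness, and coarse disjointness. The only thing to be mindful of is that in the second part one needs both halves of the definition of a coarse map: the upper control to push neighborhoods forward, and properness to pull boundedness back.
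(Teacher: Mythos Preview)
Your proof is correct and is exactly the direct unwinding of definitions the paper intends; the paper itself gives no argument, merely declaring the lemma ``immediate from the definitions.'' One small quibble: passing from $d(a,b)<R$ to $d(f(a),f(b))\le\rho(R)$ tacitly uses that $\rho$ is non-decreasing, which is the standard convention (and can always be arranged by replacing $\rho$ with $t\mapsto\sup_{s\le t}\rho(s)$), but is worth saying once.
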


A map of pairs $f:(X, A)\to (Y, B)$ is \emph{relatively proper} if for any $D \csubset B$, $f^{-1}(D) \csubset A$, and it is \emph{relatively coarse} if it is relatively proper and has an upper control.

Relatively coarse maps induce coarse maps on the coarse complements, in the following sense.
\begin{Lemma}\label{l:rel coarse map}
    Let $f:(X, A) \to (Y, B)$ be a relatively coarse map and $C\csubset X\cminus A$.
    Then $f(C)\csubset Y\cminus B$ and $f|_C$ is a coarse map.
\end{Lemma}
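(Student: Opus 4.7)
The plan is a direct definition chase through the two requirements of the conclusion. Unwinding, the hypothesis $C\csubset X\cminus A$ means that $C\cap N_{R'}(A)$ is bounded for every $R'$, while the goal $f(C)\csubset Y\cminus B$ reduces, since $f(C)\subset Y$ is automatic, to showing that $f(C)\cap N_R(B)$ is bounded for every $R$.

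The main idea is to bridge the two coarse disjointness statements through relative properness. The tautology $N_R(B)\csubset B$ (via $N_R(B)\subset[R] B$) lets us feed $N_R(B)$ into relative properness, yielding an $R'=R'(R)$ with $f^{-1}(N_R(B))\subset N_{R'}(A)$. Any $y\in f(C)\cap N_R(B)$ is then of the form $y=f(x)$ with
\[
x\in C\cap f^{-1}(N_R(B))\subset C\cap N_{R'}(A),
\]
and the right-hand side is bounded by the coarse disjointness of $C$ from $A$. Upper control on $f$ pushes this bounded set forward to a bounded set in $Y$ containing $f(C)\cap N_R(B)$, establishing the first claim.

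For the second claim, $f|_C$ inherits its upper control from $f$ because $C\subset X$. Properness is handled by the same mechanism: given a bounded $B'\subset Y$, one has $B'\csubset B$ trivially, so relative properness produces $R'$ with $f^{-1}(B')\subset N_{R'}(A)$, and hence $(f|_C)^{-1}(B')\subset C\cap N_{R'}(A)$ is bounded by coarse disjointness.

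There is no substantive obstacle; the content is entirely bookkeeping. The one step worth flagging explicitly is the observation that metric neighborhoods are tautologically coarsely contained in their base set with a controlled parameter, which is precisely what lets relative properness convert the parameter $R$ (measuring closeness to $B$ in $Y$) into a parameter $R'$ (measuring closeness to $A$ in $X$), against which the coarse disjointness of $C$ with $A$ can be invoked.
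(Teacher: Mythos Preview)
Your proof is correct and follows essentially the same approach as the paper's. The only cosmetic difference is that the paper phrases the argument in the abstract coarse-containment language (working with an arbitrary $D\csubset B$ and invoking $C\cap f^{-1}(D)\csubset C\ccap A\ceq *$), whereas you unwind the same logic concretely with explicit metric neighborhoods $N_R(B)$ and $N_{R'}(A)$; the two are equivalent by definition.
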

\begin{proof}
    To prove the first claim, we need to show that for any $D \csubset B$, $f(C)\cap D$ is bounded.
    Since $f$ is relatively proper, $f^{-1}(D) \csubset A$.
    Therefore,
    \[
    C\cap f^{-1}( D) \csubset C\ccap A \ceq *,
    \]
    and thus $C\cap f^{-1}( D) $ is bounded.
    Since $f(C)\cap D = f( C\cap f^{-1}( D) )$ and $f$ has an upper control $f(C)\cap D$ is bounded.

    To prove $f|_C$ is a coarse map, we only need to show $f|_C$ is a proper map because we already have an upper control by definition.
    Let $D\subset Y$ be bounded.
    Then $D\csubset B$ and, since $f$ is relatively proper, $f^{-1}(D) \csubset A$.
    Thus
    \[
    f|_{C}^{-1}(D)=f^{-1}(D) \cap C \csubset A \ccap C \ceq *.
    \]
    Hence $f|_{C}^{-1}(D)$ is bounded, and $f|_C$ is proper.
\end{proof}
\subsection{Alexander--Spanier complexes} 
We will refer to points in $X^{n+1}$ as \emph{$n$-simplices}.
Let $G$ be an abelian group (we will be mostly concerned with $G=\zz$ or $\zz/2$). 
We will think of functions $X^{n+1} \to G$ as $n$-cochains or $n$-chains on $X$, depending on the context, and in the latter case will use additive notation $c=\sum_{\gs\in X^{n+1}}c_\gs \gs$.

The basic complex is the complex of finitely supported integral chains
\[
\cf(X):=\{c=\sum_{i=1}^k c_i \sigma_i \mid c_i \in \zz, \, \sigma_i\in X^{*+1}\}
\]
equipped with the usual boundary map, defined on the basis by
\[
\partial (x_0, \dots, x_n):=\sum_{i=0}^n (-1)^i(x_0, \dots, \hat{x}_i, \dots, x_n).
\]
Tensoring it with $G$ gives the complex of finitely supported chains with coefficients in $G$.
\[
\cf(X;G):=\{c=\sum_{i=1}^k c_i \sigma_i \mid c_i \in G, \, \sigma_i\in X^{*+1}\}
\]
Taking $\Hom(\cf(X), G)$ gives the complex of all Alexander--Spanier cochains
\[
\C[*](X)=\{\phi: X^{*+1} \to G\}
\]
with the coboundary operator
\[
d(\phi)(x_0, \dots, x_n)=\sum_{i=0}^n(-1)^{i} \phi(x_0, \dots, \hat{x}_i, \dots, x_n).
\]
These are acyclic complexes.

Note that the boundary map $\partial$ is well-defined on a larger complex of \emph{locally finite} chains $\clf(X; G)$ which consists of chains $c$ satisfying the following property: for any bounded $B \subset X$ only finitely many simplices in $c$ have vertices in $B$.

\subsection{Coarse (co)homology} 
In what follows, we will need to measure distances between simplices of different dimensions.
A convenient way to do this is to stabilize simplices by repeating the last coordinate, as follows.
Denote by $X^\infty$ the subset of the product of countably many copies of $X$, consisting of eventually constant sequences.
Equip $X^{\infty}$ with the $\sup$ metric.
Let $i:X^{n+1} \to X^\infty$ denote the map $ (x_{0}, \dots, x_{n}) \mapsto (x_{0}, \dots, x_{n}, x_{n}, x_{n}, \dots)$.
For a function $\phi:X^{n+1} \to \zz$ define its stabilized support
\[
\Supp{\phi} = \{i(\gs) \mid \gs \in X^{n+1} \text{ and } \phi(\gs) \neq 0\} \subset X^{\infty}.
\]
Let $\gD=i(X)$ denote the diagonal of $X^{\infty}$.
Define the support of $\phi$ at scale $r$ to be $\Supp[r]{\phi} = \Supp{\phi} \cap N_{r}(\gD)$.

We now define coarse (co)homology theories, following Roe~\cite{r93} and Hair~\cite{h10} using our language.
The coarse cochain complex is
\[
\Cx(X; G) := \{\phi \in \C(X; G) \mid \Supp[r]{\phi} \text{ is bounded for all }r \}.
\]
An equivalent way to define coarse cochains is to require the support to be coarsely disjoint from the diagonal:
\[
\Cx(X; G) = \{\phi \in \C(X; G) \mid \Supp{\phi} \ccap \gD \ceq * \}.
\]
The coboundary operator $d$ preserves this property, and the coarse cohomology $\Hx(X; G)$ is defined to be the cohomology of the complex $(\Cx(X; G),d)$.
The coarse homology $\hx(X; G)$ is the homology of the following subcomplex of $\clf(X; G)$
\[
\cx(X; G):=\{c\in \clf(X; G)\mid \Supp{c}\csubset \gD\}
\]
equipped with the restriction of the boundary operator $\partial$.
Note that in the presence of the support condition $\Supp{c}\csubset \gD$ local finiteness is equivalent to $\Supp{c}$ having finite intersections with bounded subsets of $X^{\infty}$.
\begin{Example}
    Roe~\cite{r93} showed that for uniformly contractible proper metric spaces the coarse cohomology is isomorphic to the compactly supported Alexander--Spanier cohomology.
    In particular, this applies to the universal cover of finite aspherical complexes.
    In this case the coarse homology is isomorphic to the locally finite homology~\cite{r96}*{Chapter 2}.
    For example,
    \[
    \hx(\rr^n; G)=\Hx(\rr^n; G)=
    \begin{cases}
        G & *=n, \\
        0 & \text{otherwise}.
    \end{cases}
    \]
\end{Example}

\subsection{Change of coefficients} 
A short exact sequence of abelian groups $0 \to G' \to G \to G'' \to 0$ induces short exact sequences of coarse complexes, and hence long exact sequences
\[
\dots \to \hx(X; G') \to \hx(X; G) \to \hx(X; G'') \to \hx[*-1](X; G') \to \dots
\]
and
\[
\dots \to \Hx(X; G') \to \Hx(X; G) \to \Hx(X; G'') \to \Hx[*+1](X; G') \to \dots
\]
It follows that taking coarse complexes commutes with finite direct sums.
For a finitely generated abelian group $G$ this leads to an identification of the coarse complexes with coefficients in $G$ with the tensor product with $G$ of the integral coarse complexes, as follows.
Pick a finitely generated free abelian resolution of $G$:
\[
0 \to F_{1} \to F_{0} \to G \to 0.
\]
Tensoring it with $\cx(X; \zz)$ and comparing to the induced short exact sequence of the coarse chain complexes gives a commutative diagram:
\[
\begin{tikzcd}[column sep=small] 
    0 \ar[r] & \cx(X; \zz) \otimes F_{1} \ar[r] & \cx(X; \zz) \otimes F_{0} \ar[r] & \cx(X; \zz) \otimes G \ar[r] & 0 \\
    0 \ar[r] & \cx(X; F_{1}) \ar[r] \ar[u, equal] &\cx(X; F_{0}) \ar[r] \ar[u, equal] & \cx(X; G) \ar[r] \ar[u, equal, dashed ]& 0
\end{tikzcd}
\]
and similarly for cochains.
\begin{Remark}
    We are not aware of general universal coefficient formulas in the coarse setting.
    The difficulty here is that neither chains nor cochains are free abelian groups.
    However, for proper uniformly contractible spaces, since the coarse theories are isomorphic to the compactly supported cohomology and the locally finite homology~\cite{r93}, we have~\cite{m78b}*{Cor.
    4.18 and 4.25}:
    \begin{align*}
        0 \to \Ext(\Hx[*+1](X; \zz), G) \to &\hx(X; G) \to \Hom(\Hx(X; \zz), G) \to 0, \\
        0 \to \Hx(X; \zz)\otimes G \to &\Hx(X; G) \to \Tor(\Hx[*+1](X; \zz), G) \to 0.
    \end{align*}
    One might expect these formulas to hold in greater generality.
\end{Remark}

\subsection*{Functoriality} 
Coarse maps induce maps on the coarse (co)homology.
The following proposition goes back to Roe, cf.~\cite{r93}*{2.13}.
\begin{Prop}\label{coarse map}
    A coarse map $f:X\to Y$ induces chain maps $f_*:\cx(X)\to \cx(Y)$ and $f^*:\Cx(Y)\to \Cx(X)$ by the usual formulas.
    \[
    f_*((x_0, \dots, x_n)):=(f(x_0), \dots, f(x_n)),
    \]
    \[
    f^*\phi:=\phi\circ f_*.
    \]
    Close coarse maps induce chain homotopic maps.
\end{Prop}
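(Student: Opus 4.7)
The plan is to verify, in turn, that $f_*$ and $f^*$ are well-defined on the coarse complexes, that they commute with the (co)boundary, and finally to exhibit a chain homotopy for close maps. The key technical input is Lemma~\ref{l:uc} applied to the coordinate-wise map $f^{\infty}\colon X^{\infty} \to Y^{\infty}$, which is again coarse in the sup metric (inheriting the upper control $\rho$ from $f$, and proper because a bounded set in the sup metric has bounded projection onto each coordinate), and which carries $\gD_X$ into $\gD_Y$.

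For $f_*\colon \cx(X) \to \cx(Y)$, I would first observe that $\Supp{f_* c} \subset f^{\infty}(\Supp c)$, so $\Supp c \csubset \gD_X$ gives $\Supp{f_* c} \csubset \gD_Y$ by Lemma~\ref{l:uc}. Local finiteness follows because the preimage under $f^{\infty}$ of any bounded $B \subset Y^{\infty}$ is bounded (by properness), so only finitely many simplices of $c$ contribute to those of $f_* c$ with vertices in $B$; in particular, the coefficient of each simplex in $f_* c$ is a finite sum. For $f^*\colon \Cx(Y) \to \Cx(X)$, the key observation is $\Supp{f^* \phi} \subset (f^{\infty})^{-1}(\Supp \phi)$; then $\Supp \phi \ccap \gD_Y \ceq *$, together with the second half of Lemma~\ref{l:uc} and the inclusion $\gD_X \subset (f^{\infty})^{-1}(\gD_Y)$, yields $\Supp{f^* \phi} \ccap \gD_X \ceq *$. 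The identities $\partial f_* = f_* \partial$ and $d f^* = f^* d$ are formal, identical to the classical Alexander--Spanier computation.

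For the chain homotopy claim with close coarse maps $f, g\colon X \to Y$, I would use the standard prism formula
\[
H(x_0, \dots, x_n) = \sum_{i=0}^{n} (-1)^i (f(x_0), \dots, f(x_i), g(x_i), \dots, g(x_n)),
\]
which satisfies $\partial H + H \partial = g_* - f_*$ algebraically. To see that $H$ sends $\cx(X)$ into $\cx[*+1](Y)$, one checks that each simplex on the right has all of its vertices within bounded distance of each other, controlled jointly by the scale of $\gs$ and the closeness constant of $f$ and $g$, so stabilized supports of $H(c)$ stay coarsely contained in $\gD_Y$; local finiteness transfers as in the case of $f_*$. The dual construction on cochains is analogous. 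The principal obstacle I anticipate is simply the bookkeeping of how supports and local finiteness transfer under $f^{\infty}$ in the sup metric; once $f^{\infty}$ is recognized as a coarse map with the same control data as $f$, everything reduces to classical simplicial calculations.
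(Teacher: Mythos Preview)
Your proposal is correct and follows essentially the same approach as the paper: both recognize that the induced map $f^{\infty}\colon X^{\infty}\to Y^{\infty}$ is coarse, invoke Lemma~\ref{l:uc} together with properness to handle supports and local finiteness, and use the standard prism formula for the chain homotopy. Your write-up is more explicit (and your prism formula correctly carries the signs $(-1)^i$, which the paper's displayed formula omits), but there is no substantive difference in strategy.
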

\begin{proof}
    Note that $f$ induces a coarse map $f:X^{\infty} \to Y^{\infty}$, and that proper maps preserve local finiteness, so the claims follow from Lemma~\ref{l:uc}.

    For the second claim, take two close coarse maps $f,g:X\rightarrow Y$.
    Then one can easily check that the map 
    \[
    D_n(x_0,x_1,\ldots, x_n):=\sum_{i=0}^n(f(x_0),f(x_1),\dots, f(x_i),g(x_i),\dots, g(x_n))
    \]
   extends to a map $\cx[n](X) \to \cx[n+1](Y)$ and gives a chain homotopy between $f_*$ and $g_*$.
   Similarly, the dual $D_n^*$ maps $\Cx[n+1](Y)$ to $\Cx[n](X)$ and gives a chain homotopy between $f^*$ and $g^*$.
   \end{proof}
In particular, coarsely equivalent spaces have isomorphic coarse (co)homology.

\section{Coarse (co)homology of complements}\label{s:cohomology complements}
We now introduce coarse complements into the picture.
Since the results in this section apply to any choice of coefficients and to simplify notation, we fix a coefficient group $G$ and omit it from the notation.

Let $A\subset X$.
A straightforward approach to define the coarse cohomology of the complement $X-A$ is to look at cochains on $X$ which restrict to coarse cochains on complements of expanding neighborhoods of $A$, as follows.
Given a proper function $f:A \to \rr_{\geq 0}$, the corresponding expanding neighborhood of $A$ is obtained by taking the union of open balls of radius $f(x)$ centered at $x$ as $x$ varies throughout $A$:
\[
N_{f}(A)= \bigcup_{x\in A} B(x, f(x)).
\]
Note that $f$ can take $0$ as its value and so $N_{f}(A)$ might not be an actual neighborhood of $A$.

The complements of expanding neighborhoods $X-N_{f}(A)$ form a directed system under the inclusion, and we are interested in the limit in the direction of slower growth of $f$.
In other words, we want the complement to be as large as possible.

Then the coarse chains and cochains on the complements of expanding neighborhoods $X-N_{f}(A)$ form a direct and an inverse system, respectively.
We define the coarse chains and cochains of the complement to be the (co)limits of these systems.
Note that the system of complements $X-N_{f}(A)$ is cofinal in the system of sets coarsely disjoint from $A$, since for any such set $C\csubset X\cminus A$, the map $f:A\to \rr_{\geq 0}$ given by $x\mapsto d(x, C)/2$ is proper and $C\subset X-N_{f}(A)$.
Thus, we have
\begin{align*}
    \cx[n](X-A) &= \{ c =\sum_{\gs\in X^{n+1}}a_\gs\gs \mid \exists C \csubset X \cminus A \quad c \in \cx[n](C) \}, \\
    \Cx[n](X-A)&= \{ \phi \in \C[n](X) \mid \forall C \csubset X \cminus A \quad \phi|_{C}\in \Cx[n](C)\}.
\end{align*}

The usual (co)boundary operators make $\cx(X-A)$ and $\Cx(X-A)$ into complexes, and we call their (co)homology \emph{the coarse (co)homology of the complement} $\hx(X-A)$ and $\Hx(X-A)$.
Note that $X-A$ here is a part of notation, not a set-theoretic complement.
\begin{Remark}
    If $A$ is a bounded subset of $X$, then every subset of $X$ is coarsely disjoint from $A$.
    It follows that, $\Cx(X-A)=\Cx(X)$ and $\cx(X-A)=\cx(X)$.
\end{Remark}

\subsection{Relatively coarse maps} 
We now state an analog of Proposition~\ref{coarse map} in the coarse complement setting.
\begin{Prop}
    A relatively coarse map $f:(X, A)\to (Y, B)$ between pairs induces chain maps $f_*:\cx(X-A)\to \cx(Y-B)$ and $f^*:\Cx(Y-B)\to \Cx(X-A)$.
\end{Prop}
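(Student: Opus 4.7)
The plan is to define $f_{*}$ and $f^{*}$ at the level of simplices by exactly the formulas of Proposition~\ref{coarse map}, and then to verify that the coarse-complement support conditions are preserved. The bulk of the work has already been packaged in Lemma~\ref{l:rel coarse map}, which I would apply repeatedly.

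For the chain map $f_{*}$, I would start with $c \in \cx[n](X-A)$ and choose a witness $C \csubset X \cminus A$ so that $c \in \cx[n](C)$. Lemma~\ref{l:rel coarse map} then tells me that $f(C) \csubset Y \cminus B$ and that $f|_{C}: C \to f(C)$ is a coarse map. Applying Proposition~\ref{coarse map} to $f|_{C}$, I get $(f|_{C})_{*}(c) \in \cx[n](f(C))$, which is a subcomplex of $\cx[n](Y-B)$ by the very definition of the latter. Since the simplex-level formula is independent of the choice of witness $C$, this assembles into a well-defined map $f_{*}:\cx(X-A) \to \cx(Y-B)$.

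For the cochain map $f^{*}$, I would fix $\phi \in \Cx[n](Y-B)$ and check the defining condition of $\Cx[n](X-A)$: for every $C \csubset X \cminus A$ one must have $(f^{*}\phi)|_{C} \in \Cx[n](C)$. Given such a $C$, Lemma~\ref{l:rel coarse map} again yields $f(C) \csubset Y \cminus B$ together with coarseness of $f|_{C}$. The former implies $\phi|_{f(C)} \in \Cx[n](f(C))$, and then Proposition~\ref{coarse map} applied to the coarse map $f|_{C}$ shows $(f|_{C})^{*}(\phi|_{f(C)}) \in \Cx[n](C)$. Unwinding definitions, this last expression is precisely $(f^{*}\phi)|_{C}$, so $f^{*}\phi \in \Cx[n](X-A)$.

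The fact that $f_{*}$ and $f^{*}$ are chain maps follows from the same identities verified on each witness $C$ via Proposition~\ref{coarse map}, since the (co)boundaries on $\cx(X-A)$ and $\Cx(X-A)$ are simply restrictions of the Alexander--Spanier (co)boundaries. I do not anticipate any real obstacle: once Lemma~\ref{l:rel coarse map} is in hand, the argument is essentially a bookkeeping exercise of restricting to a witnessing subset, invoking the non-relative Proposition~\ref{coarse map} there, and reassembling. The only mild subtlety is remembering that for $f_{*}$ the witness $C$ comes from the chain itself, whereas for $f^{*}$ one must quantify over all witnesses $C \csubset X \cminus A$ simultaneously; both are handled identically by Lemma~\ref{l:rel coarse map}.
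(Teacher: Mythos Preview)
Your proposal is correct and follows essentially the same approach as the paper's proof: both reduce to Lemma~\ref{l:rel coarse map} to obtain that $f(C)\csubset Y\cminus B$ and $f|_C$ is coarse, and then invoke Proposition~\ref{coarse map} on the witness $C$. The paper's argument is slightly terser but the logical structure is identical.
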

\begin{proof}
    Suppose $c\in \cx(X-A)$.
    We want to show that $f_*(c)\in \cx(Y-B)$.
    Since $c\in \cx(X-A)$, there exists a $C\csubset X\cminus A$ such that $c\in \cx(C)$.
    By Lemma~\ref{l:rel coarse map}, $f|_C$ is a coarse map and hence Proposition~\ref{coarse map} implies that $f_*(c)\in \cx(f(C))$.
    Moreover by Lemma~\ref{l:rel coarse map}, $f(C)\csubset Y\cminus B$.
    It follows that $f_*(c)\in \cx(Y-B)$.

    Next we prove that $f^*(\phi)\in \Cx(X-A)$ if $\phi\in \Cx(Y-B)$.
    Let $C\csubset X\cminus A$.
    It is enough to show that $f^*\phi|_C\in \Cx(C)$.
    By Lemma~\ref{l:rel coarse map}, $f(C)\csubset Y\cminus B$.
    Since $\phi\in \Cx(Y-B)$, we have $\phi|_{f(C)}\in \Cx(f(C))$.
    Moreover, $f|_C$ is a coarse map by Lemma~\ref{l:rel coarse map}.
    Hence we have $f^*(\phi)|_C=({f|_C})^*(\phi|_C)\in \Cx(C)$ by Proposition~\ref{coarse map}.
    and the claim follows.
\end{proof}

\subsection{Another description} Recall that $i:X\to X^\infty$ denote the map $x\mapsto (x, \dots, x, \dots)$ and $\gD=i(X)$.
For a subset $A\subset X$, we denote the set $i(A)\subset \gD$ by $\gD_A$.
The next two lemmas provide more explicit descriptions of $\cx[n](X-A)$ and $\Cx[n](X-A)$.
\begin{Lemma}\label{X-A}
    \begin{align*}
        \cx[n](X-A)&=\{ c \in \clf[n](X) \mid \Supp{c} \csubset \gD \cminus \gD_{A} \}, \\
        \Cx[n](X-A)&= \{ \phi \in \C[n](X) \mid \Supp{\phi} \ccap \gD \csubset \gD_A \}.
    \end{align*}
\end{Lemma}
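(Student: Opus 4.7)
The plan is to verify both identifications by matching the defining conditions. Everything is driven by one elementary observation: if a stabilized simplex $i(\gs)$ lies in $N_R(\gD)$, then the vertices of $\gs$ are pairwise within $2R$ of each other. This lets me translate between support conditions near the diagonal and diameter bounds on vertex sets, and similarly between proximity to $\gD_A$ and proximity of individual vertices to $A$.

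For the chain identification, if $c \in \cx[n](X-A)$, i.e.\ $c \in \cx[n](C)$ for some $C \csubset X \cminus A$, then $\Supp{c} \csubset \gD_C \subset \gD$, and any $i(\gs) \in \Supp{c} \cap N_R(\gD_A)$ has all vertices in the bounded set $C \cap N_R(A)$, forcing $\Supp{c} \ccap \gD_A \ceq *$. Conversely, given $c \in \clf[n](X)$ with $\Supp{c} \csubset \gD \cminus \gD_A$, take $C$ to be the set of all vertices appearing in $c$; the diameter bound (with $\Supp{c} \subset N_r(\gD)$) yields $\Supp{c} \subset N_{2r}(\gD_C)$, hence $c \in \cx[n](C)$, and if $C \cap N_R(A)$ were unbounded then choosing a simplex of $c$ through each witness would produce an unbounded sequence in $\Supp{c} \cap N_{R+2r}(\gD_A)$, contradicting coarse disjointness.

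For the cochain identification, assume first $\Supp{\phi} \ccap \gD \csubset \gD_A$ and let $C \csubset X \cminus A$. For each $R$ pick $r$ with $\Supp{\phi}\cap N_R(\gD) \subset N_r(\gD_A)$; any $i(\gs) \in \Supp{\phi|_C} \cap N_R(\gD_C)$ then has vertices in $C \cap N_r(A)$, a bounded set, so $\phi|_C \in \Cx[n](C)$.

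The reverse inclusion is the main obstacle, and I would argue it by contrapositive. Suppose $\Supp{\phi} \ccap \gD \not\csubset \gD_A$, so there is $R$ and a sequence $i(\gs_k) \in \Supp{\phi}\cap N_R(\gD)$ with $d(i(\gs_k), \gD_A) \to \infty$. Combining the diameter bound with the estimate $\inf_a \sup_j d(v_j^k, a) \leq \min_j d(v_j^k, A) + 2R$ forces every vertex $v_j^k$ of $\gs_k$ to escape every metric neighborhood of $A$ as $k \to \infty$. Letting $C$ be the union of all vertices of all $\gs_k$ then gives $C \csubset X \cminus A$, so by hypothesis $\phi|_C \in \Cx[n](C)$, yet $i(\gs_k) \in \Supp{\phi|_C} \cap N_{2R}(\gD_C)$ is an unbounded sequence, a contradiction. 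The subtle steps are the $\min$-$\inf$ interchange that turns a single infimum in $d(i(\gs_k),\gD_A)$ into control of each individual vertex distance, and packaging all the vertices into one coarsely disjoint set $C$ rather than handling them one simplex at a time.
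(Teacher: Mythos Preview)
Your argument is correct. For the chain identification your proof is essentially identical to the paper's: both take $C$ to be the set of vertices appearing in $c$ (the paper writes this as $C=\bigcup_{i} p_i(\Supp{c})$) and verify the two support conditions.

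For the cochain identification your route is a little different. The paper observes that ``$\phi|_C$ is coarse for all $C\csubset X\cminus A$'' is exactly the statement $(\gD\cminus\gD_A)\csubset(\gD\cminus\Supp{\phi})$, and then applies Lemma~\ref{l:complement} to convert this into $\Supp{\phi}\ccap\gD\csubset\gD_A$ in one stroke. You instead argue the nontrivial implication directly by contrapositive, extracting a sequence of simplices whose vertices escape $A$ and packaging them into a single $C$. This is precisely what the proof of Lemma~\ref{l:complement} does in the abstract; you have unwound that lemma inline for this particular case. The paper's version is shorter and highlights the role of the general coarse set-theoretic fact, while yours is self-contained and makes the metric estimates (the $2R$ diameter bound, the $\min$--$\inf$ comparison) explicit.
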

\begin{proof}
    The condition $c \in \cx[n](C)$ for some $C \csubset X \cminus A$ implies that $\Supp{c} \csubset \gD_{C}$.
    Therefore $\Supp{c} \csubset \gD \cminus \gD_{A}$.
    Vice versa, if $\Supp{c} \csubset \gD \cminus \gD_{A}$, set $C=\bigcup_{i=0}^{n} p_{i}(\Supp{c})$.
    Then $C \csubset X \cminus A$ and $\Supp{c} \subset C^{n+1}$, so $c \in \cx[n](C)$.
    This proves the first equation.

    To prove the second, note that the condition $\phi |_{C}$ is coarse for all $C \csubset X \cminus A$ means that $\Supp{\phi} \cap C^{n+1}$ is coarsely disjoint from $\gD_{C}$. 
    Since we can replace $C$ with its metric neighborhood, this is equivalent to $\Supp{\phi} \ccap C^{n+1} \ccap \gD_{C} \ceq *$, which we can rewrite as $\Supp{\phi} \ccap \gD_{C} \ceq * $.
    The condition $C \csubset X \cminus A$ is equivalent to $\gD_{C} \csubset \gD \cminus \gD_{A}$, so we have $( \gD \cminus \gD_{A} ) \csubset ( \gD \cminus \Supp{\phi} ) $.
    By Lemma~\ref{l:complement} this is equivalent to $\Supp{\phi} \ccap \gD \csubset \gD_{A}$, as claimed.
\end{proof}

We now consider a slight generalization of the above (co)chain complexes.
Let $A, B \subset X$.
Define:
\begin{align*}
    \cx(B-A\subset X)&=\{ c \in \clf(X) \mid \Supp{c} \csubset \gD_{B} \cminus \gD_{A} \}, \\
    \Cx(B-A \subset X)&= \{ \phi \in \C(X) \mid \Supp{\phi} \ccap \gD_B \csubset \gD_A \}.
\end{align*}
We claim that the homology of these complexes does not depend on the ambient space $X$, it depends only on the metric on $B \cup A$:
\begin{Lemma}\label{l:equiv1}
    The maps induced by inclusion
    \begin{align*}
        i_{*} :\cx(B-A \subset B \cup A) \to \cx(B-A \subset X), \\
        i^{*}:\Cx(B-A \subset X) \to \Cx(B-A \subset B \cup A)
    \end{align*}
    are chain homotopy equivalences.
\end{Lemma}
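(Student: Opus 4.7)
The plan is to construct an explicit chain-homotopy inverse via a nearest-point projection. Using the axiom of choice if needed, pick for each $x\in X$ a point $\pi(x)\in B\cup A$ with $d(x,\pi(x))\leq d(x,B\cup A)+1$, and arrange $\pi(x)=x$ for $x\in B\cup A$. Although $\pi$ is not a coarse map on all of $X$, it has uniformly bounded displacement on any subset $Y\csubset B\cup A$. Define $\pi_*(x_0,\ldots,x_n):=(\pi(x_0),\ldots,\pi(x_n))$ and $\pi^*\phi:=\phi\circ\pi_*$; these will serve as the chain-homotopy inverses to $i_*$ and $i^*$.

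The first step is to verify that $\pi_*$ and $\pi^*$ are well defined in the correct directions. If $c\in\cx(B-A\subset X)$, then $\Supp{c}\csubset\gD_B$ forces all vertices appearing in $c$ to lie within a uniform metric neighborhood of $B$, so $\pi$ acts on them with uniformly bounded displacement. Together with the triangle inequality and Lemma~\ref{l:subcap}, this yields $\Supp{\pi_*(c)}\csubset\gD_B\cminus\gD_A$ and $\pi_*(c)\in\clf(B\cup A)$, so $\pi_*(c)\in\cx(B-A\subset B\cup A)$. The cochain case is dual: any $\gs\in X^{n+1}$ with $\pi^*\phi(\gs)\neq 0$ satisfies $\pi_*(\gs)\in\Supp{\phi}$, and bounded displacement transfers the coarse condition on $\Supp{\phi}$ to one on $\Supp{\pi^*\phi}$.

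The second step is to check the two pairs of compositions. Because $\pi|_{B\cup A}=\mathrm{id}$, the compositions $\pi_*i_*$ and $i^*\pi^*$ are literally the identity. For the remaining pair $i_*\pi_*$ and $\pi^*i^*$ I would use the standard prism chain homotopy
\[
D_n(x_0,\ldots,x_n):=\sum_{i=0}^n(-1)^i(x_0,\ldots,x_i,\pi(x_i),\ldots,\pi(x_n))
\]
and its cochain dual, exactly as in the second half of the proof of Proposition~\ref{coarse map}. The simplicial identity $\partial D+D\partial=i_*\pi_*-\mathrm{id}$ is the same calculation as there.

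The main obstacle is checking that $D$ preserves the support conditions defining our complexes. Each mixed simplex $(x_0,\ldots,x_i,\pi(x_i),\ldots,\pi(x_n))$ has both halves $x_j$ and $\pi(x_j)$ uniformly close to $B$, giving coarse containment of $\Supp{D_n(c)}$ in $\gD_B$. For coarse disjointness from $\gD_A$, the key point is that a bounded metric neighborhood of $\gD_A$ pulls back under a bounded-displacement perturbation to a (larger) bounded metric neighborhood of $\gD_A$, so if $\Supp{c}$ is coarsely disjoint from $\gD_A$, so is $\Supp{D_n(c)}$. The dual check for the cochain prism is parallel, and once these support verifications are in place the chain-homotopy equivalence follows immediately.
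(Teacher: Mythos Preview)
Your proposal is correct and follows essentially the same route as the paper: choose an approximate nearest-point projection $\pi:X\to B\cup A$ fixing $B\cup A$ pointwise, observe that $\pi$ has uniformly bounded displacement on any set $\csubset \gD_{B\cup A}$, use the induced $\pi_*$ and $\pi^*$ as inverses, get $\pi_*i_*=\mathrm{id}$ and $i^*\pi^*=\mathrm{id}$ for free, and handle the remaining compositions with the prism homotopy $D_n$. The paper's proof is the same construction with slightly terser support checks.
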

\begin{proof}
    Choose an approximate projection $\pi :X \to B \cup A$ with the property that $\pi$ restricts to the identity on $B \cup A$ and for all $x \in X$, $d(x, \pi(x)) < d(x, B \cup A) +1$.
    This map induces a map on $n$-simplices $\pi :X^{n+1} \to (B \cup A)^{n+1}$, and we claim that it extends linearly to a well-defined map $\pi_{n}: \cx[n](B-A \subset X) \to \cx[n](B-A \subset B \cup A)$.

    Indeed, if $c \in \cx[n](B-A \subset X)$, then there exists $R$ such that $\Supp{c} \subset[R] \gD_B$.
    Since $\pi |_{N_{R} (\gD_{B \cup A})}$ is $(R+1)$-close to the identity, it follows that $\pi_{n}(c)$ is a well-defined chain in $\cx[n](B-A \subset B \cup A)$.

    Next define
    \[
    D_n(x_0, \dots, x_n)=\sum_{i=0}^{n} (-1)^i(x_0, \dots, x_i, \pi(x_i), \dots, \pi(x_n)).
    \]
    A similar argument shows that $D_{n}$ extends linearly to a well-defined map $D_{n}: \cx[n](B-A \subset X) \to \cx[n+1](B-A \subset X)$.
    Finally, we note that $\pi_{*} i_{*}=id$ and $i_{*} \pi_{*} -id =D_{*}\partial + \partial D_{*} $.

    The approximate projection $\pi$ induces a map $\pi^*:\C(B \cup A) \to \C(X)$, $\pi^{*}(\phi)= \phi\pi$.
    Thus we have $\Supp{\pi^{*}(\phi)} \cap N_{R}( \gD_{A} ) \subset (\pi |_{N_{R}( \gD_{A} )})^{-1}(\Supp{\phi})$ and $\Supp{\pi^{*}(\phi)} \cap N_{R}( \gD_{B} ) \subset (\pi |_{N_{R}( \gD_{B} )})^{-1}(\Supp{\phi})$.
    Since $\pi |_{N_{R}( \gD_{B\cup A} )}$ is close to identity, it follows that $\pi^{*}$ restricts to a map $\pi^*:\Cx(B-A \subset B \cup A) \to \Cx(B-A \subset X)$.
    Similarly $D^{*}$ defined by $D^{*}(\phi) = \phi D_{*}$ gives a chain homotopy between $\pi^*i^{*} $ and $id$.
\end{proof}

It follows that we can write unambiguously $\hx(B - A)$ and $\Hx(B - A)$.
Note that when $B=X$ or $A=\emptyset$ this agrees with the previous notation.
Also, these definitions make sense even if $B$ is empty, and in this case the complexes become the Alexander--Spanier complexes of finitely supported chains and all cochains on $X$ and hence
\[
\hx(\emptyset; G)=\Hx(\emptyset; G)=
\begin{cases}
    G & *=0, \\
    0 & \text{otherwise}.
\end{cases}
\]

\section{Coarse complement as a space}\label{s:complements}

As we already mentioned, $X \cminus A$ rarely has a coarse type of a subspace of $X$.
However, it is still possible to compute the coarse cohomology of the complement as the coarse cohomology of a single space, but this space is not a subspace of $X$, but rather a quotient space.
We continue to suppress the coefficients $G$ from the notation.

We start by defining a pseudometric $d_A$ on $X$ by
\[
d_A(x, y) = \min(d(x, A)+d(y, A), d(x, y)).
\]
\begin{Lemma}\label{l:d-da}
    \leavevmode
    \begin{enumerate}
        \item\label{i:nbdA}
        $N_r^{d}(A^{n+1}) = N_r^{d_A}(A^{n+1})$.
        \item\label{i:d}
        $N_r^{d}(\gD) \subset N_r^{d_A}(\gD)$.
        \item\label{i:da}
        $N_r^{d_A}(\gD) \subset N_{2r}^{d}(\gD \cup A^{n+1})$.
    \end{enumerate}
\end{Lemma}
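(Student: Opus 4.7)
The plan is to prove the three inclusions separately. The common observation underlying the arguments is that $d_A \le d$ pointwise on $X \times X$, and that $d_A(x, a) = d(x, A)$ whenever $a \in A$ (since $d(a, A) = 0$ kills that term and $d(x, A) \le d(x, a)$ makes the minimum equal to $d(x, A)$). The sup-metric on $X^{n+1}$ then satisfies $d_A^{sup} \le d^{sup}$ coordinatewise.

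For part (\ref{i:nbdA}), the inclusion $N_r^d(A^{n+1}) \subset N_r^{d_A}(A^{n+1})$ is immediate from $d_A \le d$. For the reverse, given $x$ with $d_A^{sup}(x, a) < r$ for some $a \in A^{n+1}$, the second observation forces $d(x_i, A) < r$ for each $i$. Choosing coordinatewise near-projections $a'_i \in A$ within $d$-distance $r$ of each $x_i$ then exhibits $x \in N_r^d(A^{n+1})$.

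Part (\ref{i:d}) is immediate from $d_A \le d$ applied coordinate-wise.

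The main step is part (\ref{i:da}), which I would prove by a case split. Given $x \in N_r^{d_A}(\gD)$, take $y \in X$ with $d_A(x_i, y) < r$ for all $i$. By definition of $d_A$, each index must satisfy at least one of $d(x_i, y) < r$ or $d(x_i, A) + d(y, A) < r$. Partition $\{0, \dots, n\}$ into $I_1 = \{i : d(x_i, y) < r\}$ and its complement $I_2$, where the second inequality then necessarily holds. If $I_2 = \emptyset$, then $x$ lies within $d^{sup}$-distance $r$ of the diagonal tuple $(y, \dots, y)$ and we are done. Otherwise, picking any $i_0 \in I_2$ gives $d(y, A) < r$, so there exists $a \in A$ with $d(y, a) < r$. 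The triangle inequality then yields $d(x_i, A) < 2r$ for $i \in I_1$, while $d(x_i, A) < r$ for $i \in I_2$, so $x \in N_{2r}^d(A^{n+1}) \subset N_{2r}^d(\gD \cup A^{n+1})$.

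The only subtlety to highlight is the factor of $2$ in part (\ref{i:da}): it is inherent, arising from the need to route coordinates that are ``close to $y$'' through $y$ in order to reach $A$, which costs one extra application of the triangle inequality.
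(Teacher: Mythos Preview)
Your proof is correct and follows essentially the same strategy as the paper's. The only cosmetic difference is in part~\eqref{i:da}: the paper splits cases on whether the diagonal point $y$ satisfies $d(y,A)\le r$ or $d(y,A)>r$, and in the former case uses the triangle inequality for $d_A$ together with part~\eqref{i:nbdA} to reach $N_{2r}^d(A^{n+1})$ without a coordinate partition, while in the latter case every coordinate is forced to satisfy $d(x_i,y)<r$; your partition $I_1\cup I_2$ reorganizes the same dichotomy.
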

\begin{proof}

    \eqref{i:nbdA} and \eqref{i:d} follow from the facts that $d_{A}(x, A)=d(x, A)$ and $d_A(x, y) \leq d(x, y)$.

    To prove \eqref{i:da}, let $\gs \in N_{r}^{d_{A}}(\gD)$.
    So we have $d_A(\gs, x ) < r$, for some $x \in \gD$.
    If $d(x, A^{n+1}) \leq r$, then $d(\gs, A^{n+1})=d_{A}(\gs, A^{n+1}) \leq d_{A}(\gs, x)+ d_{A}(x, A^{n+1}) < 2r$ and $\gs \in N_{2r}^{d}(A^{n+1})$.
    If $d(x, A^{n+1}) > r$ then, since $d_A(x, \gs) < r$, it follows from the definition of $d_{A}$ that $d_{A}(x, \gs) = d(x, \gs)$ and $\gs \in N_{2r}^{d}(\gD)$.
\end{proof}
Much of the coarse metric theory works the same for pseudometric spaces.
In particular, we have the usual definition of coarse (co)chains.
\begin{align*}
    \cx[n](X, d_A) &= \{c \in \clf[n](X, d_{A}) \mid \Supp{c} \csubset[d_A] \gD \}, \\
    \Cx[n](X, d_A) &= \{\phi\in \C[n](X) \mid \Supp{\phi} \ccap[d_A] \gD \ceq[d_A ] * \}.
\end{align*}
\begin{Prop}\label{(X, d_A)=X-A}
    \begin{align*}
        \cx[n](X, d_A) &= \cx[n](X-A), \\
        \Cx[n](X, d_A) &= \Cx[n](X-A).
    \end{align*}
\end{Prop}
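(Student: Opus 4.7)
The plan is to use Lemma~\ref{l:d-da} as a dictionary between the genuine metric $d$ and the pseudometric $d_A$, and to establish each of the four containments (two for chains, two for cochains) directly from the definitions.

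For the chain identity, the forward inclusion $\cx[n](X, d_A) \subset \cx[n](X-A)$ uses part~\eqref{i:da}: from $\Supp{c} \subset N_R^{d_A}(\gD)$ one obtains $\Supp{c} \subset N_{2R}^d(\gD) \cup N_{2R}^d(A^{n+1})$. The second piece has vertices in a $d_A$-bounded neighborhood of $A$, so $d_A$-local finiteness forces only finitely many simplices of $c$ there, and discarding them yields $\Supp{c} \csubset \gD$ in $d$. The same local-finiteness idea applied to $d$-neighborhoods of $\gD_A$ (whose points have all vertices in $N_r^d(A)$, a $d_A$-bounded set) gives the coarse disjointness $\Supp{c} \ccap \gD_A \ceq *$. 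The reverse inclusion follows from part~\eqref{i:d}: $\Supp{c} \csubset \gD$ in $d$ immediately implies $\Supp{c} \csubset[d_A] \gD$, and to verify $d_A$-local finiteness one checks that a simplex whose vertices lie in a $d_A$-bounded set $B \subset N_r^d(A)$ and which also lies in $N_R^d(\gD)$ must sit inside $N_{r+2R}^d(\gD_A)$; the disjointness hypothesis bounds this intersection, so its vertex set is $d$-bounded and the $d$-local finiteness of $c$ finishes.

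For the cochain identity, the inclusion $\Cx[n](X-A) \subset \Cx[n](X, d_A)$ again rests on part~\eqref{i:da}: one covers $\Supp{\phi} \cap N_r^{d_A}(\gD)$ by $\Supp{\phi} \cap N_{2r}^d(\gD)$ (contained in some $N_R^d(\gD_A)$ by hypothesis, hence with all vertices within $R$ of $A$) together with $\Supp{\phi} \cap N_{2r}^d(A^{n+1})$ (already with vertices within $2r$ of $A$). In both cases the vertices lie in a common $d$-neighborhood of $A$, so the union is $d_A$-bounded. The opposite inclusion is the subtler one: starting from $d_A$-boundedness of $S := \Supp{\phi} \cap N_r^{d_A}(\gD)$, one fixes any reference simplex $\sigma_0 \in S$, whose finitely many distinct vertices lie at some finite distance $R_0$ from $A$; then for every $\sigma \in S$ and every coordinate $k$, the bound $d_A(\sigma_k, (\sigma_0)_k) \leq D$ forces $d(\sigma_k, A) \leq R_0 + D$ regardless of which term realizes the minimum in $d_A$. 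Consequently all simplices of $S$ have vertices in a single $N_M^d(A)$; intersecting with $N_r^d(\gD)$ then concentrates each such simplex within $M+2r$ of a point of $A$, placing it inside $N_{M+2r}^d(\gD_A)$ as needed.

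The main obstacle is precisely this last step: $d_A$-boundedness of a set does not a priori bound the $d$-distance of its vertices to $A$, because $d_A$ collapses $A$ to a single point. The resolution is to anchor the estimate at one reference simplex, whose bounded number of distinct vertices supplies a finite $R_0$, after which the non-smooth minimum in the definition of $d_A$ is handled by a brief case analysis. The corner case $A = \emptyset$ is harmless, since then $d_A = d$ and $\gD_A = \emptyset$, and both identities reduce to tautologies.
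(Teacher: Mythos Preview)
Your proposal is correct and follows essentially the same route as the paper: both arguments use the three parts of Lemma~\ref{l:d-da} as the dictionary between $d$- and $d_A$-neighborhoods and verify the four containments directly from the support descriptions. The only noteworthy difference is in the cochain inclusion $\Cx[n](X,d_A)\subset\Cx[n](X-A)$: the paper short-circuits your anchoring step by observing that $d_A$-bounded is equivalent to $\csubset[d_A] A^{n+1}$, which by part~\eqref{i:nbdA} is the same as $\csubset[d] A^{n+1}$, whence $\Supp{\phi}\ccap[d]\gD\csubset[d] A^{n+1}\ccap[d]\gD=\gD_A$ in one line; your reference-simplex argument reproves this fact by hand, which is a bit longer but perfectly sound.
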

\begin{proof}

    Let $c \in \cx[n](X, d_A)$.
    Since $c \in \clf[n](X, d_A)$ and any bounded set in $(X^{n+1}, d)$ is bounded in $(X^{n+1}, d_A)$, we have $c \in \clf[n](X)$.
    Moreover, since $A^{n+1}$ is bounded in $d_{A}$ metric, Lemma~\ref{l:d-da}\eqref{i:nbdA} implies that $\Supp{c}\cap N^d_r(A^{n+1})$ is finite for any $r\geq 0$.
    So $\Supp{c}$ is coarsely disjoint from $A^{n+1}$.

    On the other hand, the condition $\Supp{c}\csubset[d_A]\gD$ and Lemma \ref{l:d-da}\eqref{i:da} give $\Supp{c} \csubset[d] \gD \cup A^{n+1}$.
    Hence, $\Supp{c}\csubset[d] \gD\cminus[d]\gD_A$, which together with local finiteness implies $c\in \cx[n](X-A)$.

    Conversely, let $c \in \cx[n](X-A)$.
    Then, for any $r$, $\Supp{c} \cap N^d_r(\gD_A) $ is bounded, and hence finite, since $c$ is locally finite.

    Then, since $\Supp{c} \csubset[d] \gD$, $\Supp{c} \cap N^d_r(A^{n+1}) $ is also finite, and therefore, by Lemma \ref{l:d-da}\eqref{i:nbdA}, we have that $ \Supp{c} \cap N_r^{A^{n+1}}(\gD_A) $ is finite as well.
    Also, the condition $\Supp{c} \csubset[d] \gD$ implies by Lemma \ref{l:d-da}\eqref{i:d} that $\Supp{c} \csubset[d_A] \gD$.
    Hence $c\in \cx[n](X, d_A)$.
    This proves the first assertion.

    Let $\phi \in \Cx[n](X, d_A)$.
    Then $\Supp{\phi} \ccap[d_{A}] \gD \ceq[d_{A}] *$.
    Since $A^{n+1}$ is bounded in $(X^{n+1}, d_{A})$, this is equivalent to $\Supp{\phi} \ccap[d_{A}] \gD \csubset[d_{A}] A^{n+1}$.
    By Lemma~\ref{l:d-da}\eqref{i:nbdA} and \eqref{i:d}, we have $\Supp{\phi} \ccap[d] \gD \csubset[d] A^{n+1}$.
    It follows that $( \Supp{\phi} \ccap[d] \gD ) \csubset[d] (A^{n+1} \ccap[d] \gD) = \gD_{A}$.
    Hence, $\phi \in \Cx[n](X-A)$.

    Conversely, if $\phi\in \Cx[n](X-A)$, then $\Supp{\phi} \ccap[d] \gD \csubset[d] \gD_A$, and it follows that $\Supp{\phi} \ccap[d] (\gD \cup A^{n+1}) \csubset[d] A^{n+1}$.
    By Lemma~\ref{l:d-da}\eqref{i:nbdA} and \eqref{i:da} this implies $\Supp{\phi} \ccap[d_{A}] \gD \csubset[d_{A}] A^{n+1}$.
    Hence $\phi\in \Cx[n](X, d_A)$.
    This proves the second assertion.
\end{proof}
Since $d_A(x, y)=0$ if and only if $(x, y)\in \overline{A} \times \overline{A}$, the pseudometric $d_A$ becomes a metric on the quotient space $X/ \overline{A}$.
One easily checks that the quotient map $q:(X, d_A) \to (X/\overline{A}, d_A)$ is a coarse equivalence, and therefore it induces an isomorphism on the coarse (co)homology.
Using the previous proposition we obtain the following.
\begin{Prop}\label{(X-A)=(X/A)}
    The quotient map $q: X \to X/\overline{A}$ induces isomorphisms $\hx(X/\overline{A}) = \hx(X - A)$ and $\Hx(X/\overline{A}) = \Hx(X - A)$.
\end{Prop}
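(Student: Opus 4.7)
The plan is to realize $q$ as a coarse equivalence between the pseudometric space $(X, d_A)$ and its metric quotient $(X/\overline{A}, d_A)$, and then combine this with Proposition~\ref{(X, d_A)=X-A}. By construction of the quotient pseudometric, one has $d_A(q(x), q(y)) = d_A(x, y)$ for all $x, y \in X$, so $q$ is tautologically $1$-Lipschitz and admits the identity function as both upper and lower control. To produce a coarse inverse, I would pick any set-theoretic section $s : X/\overline{A} \to X$, that is, choose one representative per equivalence class. Then $qs = \operatorname{id}_{X/\overline{A}}$ exactly, and $d_A(sq(x), x) = 0$ for every $x \in X$, so $sq$ is close to $\operatorname{id}_X$ in the pseudometric sense. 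Thus $q$ and $s$ are mutually coarse-inverse maps.

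Next, I would invoke the pseudometric extension of the functoriality of Proposition~\ref{coarse map}. Its proof uses only the existence of an upper control and properness, and the support conditions defining $\cx$ and $\Cx$ are phrased in terms of coarse containment, coarse disjointness, and metric neighborhoods, all of which carry over verbatim to pseudometrics (as remarked just before Proposition~\ref{(X, d_A)=X-A}). Consequently $q_*$ and $s_*$ induce mutually quasi-inverse chain maps between $\cx(X, d_A)$ and $\cx(X/\overline{A}, d_A)$: the composition $q_* s_*$ is strictly the identity since $qs=\operatorname{id}$, while the prism formula $D_n$ from Proposition~\ref{coarse map} applied to the close maps $sq$ and $\operatorname{id}_X$ produces a chain homotopy $s_* q_* \simeq \operatorname{id}$. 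The same argument dualizes to give that $q^*$ is a chain homotopy equivalence on $\Cx$. Combining these equivalences with the equalities $\cx(X, d_A) = \cx(X - A)$ and $\Cx(X, d_A) = \Cx(X - A)$ from Proposition~\ref{(X, d_A)=X-A} produces the claimed isomorphisms on $\hx$ and $\Hx$.

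The only non-formal point to verify is that $q_*$ preserves local finiteness, since that property is not directly bundled into the coarse support condition. But if $B \subset (X/\overline{A})^{n+1}$ is bounded in the quotient metric, then $q^{-1}(B)$ is $d_A$-bounded in $X^{n+1}$, so any locally finite chain in $\cx(X, d_A)$ uses only finitely many simplices mapping into $B$; hence $q_*(c)$ is a well-defined locally finite chain on the quotient. With this check in hand, the assembly of Proposition~\ref{coarse map} and Proposition~\ref{(X, d_A)=X-A} completes the argument.
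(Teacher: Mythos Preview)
Your proposal is correct and follows essentially the same route as the paper: the paper states (in the paragraph immediately preceding the proposition) that $q:(X,d_A)\to(X/\overline{A},d_A)$ is a coarse equivalence and then invokes Proposition~\ref{(X, d_A)=X-A}. You have simply filled in the details the paper leaves implicit, namely the explicit coarse inverse $s$, the pseudometric version of Proposition~\ref{coarse map}, and the local-finiteness check for $q_*$.
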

\begin{Remark}
    Another coarse model for $(X \cminus A)$ can be obtained by the ``coning off'' construction, popular in the study of relatively hyperbolic groups.
    Here instead of taking a quotient by $A$ one attaches a cone $cA$ on $A$, and declares $d(c, a)=1$ for all $a \in A$.
\end{Remark}
\section{Separations and \texorpdfstring{$\Hx[1](X-A;\zz/2)$}{HX1(X-A; Z/2)}}\label{s:separation}

We now consider the coarse cohomology with $\zz/2$ coefficients $\Hx(X-A;\zz/2)$.
It is the cohomology of the cochain complex
\[
\Cx[n](X-A; \zz/2) = \{ \phi \in \C[n](X; \zz/2) \mid \Supp{\phi} \ccap \gD \csubset \gD_A \}.
\]
We will see that $\Hx[1](X-A;\zz/2)$ measures the number of coarse components of the complement of $A$.
Making this precise requires some preparation.

There is a simple interpretation of $1$-cocycles with $\zz/2$-coefficients in terms of subsets of $X$.
The power set $2^{X}$ is a boolean algebra, with multiplication given by the intersection and addition by the symmetric difference.
The complementation map $C \to X-C$ is a free involution on $2^{X}$.
Let $\cS$ be the quotient space, we will think of its elements as unordered pairs $\{C, X-C\}$, and refer to them as \emph{separations} of $X$.
Alternatively, since $X-C=X \symdiff C$, one can think of $\cS$ as a quotient of abelian groups $2^{X}/ \{\emptyset, X\} $.
As an abelian group $2^{X}$ is isomorphic to $\C[0](X; \zz/2) $, the isomorphism is given by taking supports and the map $C \to 1_{C}$.
Thus we get a commutative diagram, where $F$ is the induced isomorphism.
\[
\begin{tikzcd}
    0 \ar[r] & \{\emptyset, X\} \ar[r] \ar[d, equal] & 2^{X} \ar[r] \ar[d, "1_{(\cdot)}", rightharpoonup, shift left=1pt] & \cS \ar[r] \ar[d, "F"]& 0 \\
    0 \ar[r] &\zz/2 \ar[r] & \C[0](X; \zz/2) \ar[r, "d"] \ar[u, "\Supp{\cdot}", rightharpoonup, shift left=1pt] & Z^{1}(X; \zz/2) \ar[r] & 0
\end{tikzcd}
\]
More explicitly, $F(\{C, X-C\})=d(1_{C})$, and its inverse is given by the following lemma.
\begin{Lemma}
    Given a cocycle $\phi \in Z^{1}(X; \zz/2)$ consider the following relation on $X$: $x \sim y$ if $\phi(x, y)=0$.
    This relation is an equivalence relation with at most two equivalence classes, i.e.
    a separation.
\end{Lemma}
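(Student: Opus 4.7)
The plan is to unpack the $1$-cocycle condition $d\phi = 0$ and repeatedly specialize it to appropriately chosen triples. Recall that for a $1$-cochain, the coboundary formula gives
\[
d\phi(x_0, x_1, x_2) = \phi(x_1, x_2) - \phi(x_0, x_2) + \phi(x_0, x_1),
\]
so in $\zz/2$ the cocycle condition reads
\[
\phi(x_0, x_1) + \phi(x_0, x_2) + \phi(x_1, x_2) = 0
\]
for all $x_0, x_1, x_2 \in X$. This single identity is the engine of the whole proof.

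First I would verify that $\sim$ is an equivalence relation by plugging in degenerate triples. Applying the cocycle condition to $(x,x,x)$ yields $3\phi(x,x) = 0$, hence $\phi(x,x) = 0$, which gives reflexivity. Applying it to $(x,y,x)$ gives $\phi(x,y) + \phi(x,x) + \phi(y,x) = 0$, so $\phi(x,y) = \phi(y,x)$, giving symmetry. For transitivity, suppose $x \sim y$ and $y \sim z$; then $\phi(x,y) = \phi(y,z) = 0$, and the cocycle condition on $(x,y,z)$ forces $\phi(x,z) = \phi(x,y) + \phi(y,z) = 0$, so $x \sim z$.

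Finally, to show that there are at most two equivalence classes, I argue by contradiction. Suppose $x_1, x_2, x_3$ belong to three distinct classes. Then $\phi(x_i, x_j) = 1$ for all $i \neq j$, and the cocycle condition applied to $(x_1, x_2, x_3)$ gives
\[
1 + 1 + 1 = 0 \quad \text{in } \zz/2,
\]
which is false. Hence no three points can represent three different classes, so there are at most two.

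There is no real obstacle here; the argument is a direct unpacking of the cocycle identity, and the restriction to at most two classes is precisely the arithmetic fact that $1 + 1 + 1 \neq 0$ in $\zz/2$. The only thing worth noting is that the map $F$ built above already predicts this: separations correspond to $0$-cochains mod constants, and the non-triviality of the constant $\zz/2$ is exactly the obstruction to ever seeing more than two classes.
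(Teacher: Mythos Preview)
Your proof is correct and is essentially the same as the paper's: both derive the cocycle identity $\phi(x,y)+\phi(x,z)+\phi(y,z)=0$ in $\zz/2$ and then specialize it to degenerate triples to obtain reflexivity, symmetry, and transitivity, and use it once more to bound the number of classes by two. The only cosmetic difference is that the paper phrases the last step directly (if $y\not\sim x$ and $z\not\sim x$ then $y\sim z$) rather than by contradiction.
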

\begin{proof}
    Since $\phi$ is a cocycle with $\zz_2$-coefficients, for any $x, y, z \in X$ we have
    \[
    \phi(x, y)+\phi(x, z)+\phi(y, z)=0.
    \]
    This equation immediately implies the transitivity of the relation.
    Taking $z=y=x$ in the equation shows that $\phi(x, x)=0$ and hence the relation is reflexive.
    Taking $z=y$ shows then that $\phi(x, y)=\phi(y, x)$ and hence the relation is symmetric.

    The equation also implies that if neither $y$ nor $z$ is related to $x$, then $y$ and $z$ are related, and therefore there are at most two equivalence classes.
\end{proof}

We now introduce coarse supports into the picture.
Some of the following definitions are inspired by \cite{m18}.
Let $A\subset X$, a subset $C\subset X$ is a \emph{coarse complementary component} of $A$ if $C\ccap (X-C)\csubset A$.
$C$ of $A$ is \emph{shallow} if $C\csubset A$, otherwise it is \emph{deep}.
The \emph{$r$-boundary} of $C$ is
\[
\partial_r C := \{x \in X-C \mid d(x, C) \leq r\}.
\]
The following description follows easily from Lemma~\ref{l:nbds}.
\begin{Lemma}\label{l:complement criterion}
    $C$ is a coarse complementary component of $A$ if and only if $\partial_r(C) \csubset A$ for all $r$.
\end{Lemma}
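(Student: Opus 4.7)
The plan is to establish the equivalence by unfolding both sides to metric statements and checking that the ``symmetric'' neighborhood description $N_R(C)\cap N_R(X-C)\csubset A$ and the ``asymmetric'' boundary description $\partial_r C\csubset A$ are the same thing up to doubling of the parameter. The first is $C\ccap(X-C)\csubset A$ by definition of coarse intersection; the second is the hypothesis of Lemma~\ref{l:complement criterion}. The passage from one to the other is exactly the content of Lemma~\ref{l:nbds}.

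\textbf{Forward direction.} Assuming $C\ccap(X-C)\csubset A$, I would show that each $\partial_r C$ is a subset of $N_{r+1}(C)\cap N_{r+1}(X-C)$, which is coarsely contained in $A$ by hypothesis applied with $R=r+1$. Indeed, $x\in\partial_r C$ means $x\in X-C$ and $d(x,C)\leq r$, so $d(x,C)<r+1$ and $d(x,X-C)=0<r+1$.

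\textbf{Backward direction.} Assuming $\partial_r C\csubset A$ for all $r$, fix $R$ and apply Lemma~\ref{l:nbds} with the lemma's $C,A,B$ instantiated as $N_R(C)\cap N_R(X-C)$, $X-C$, and $C$ respectively (and both radii in the lemma equal to $R$). This gives
\[
N_R(C)\cap N_R(X-C)\subset[R] (X-C)\cap N_{2R}(C).
\]
The right-hand side is contained in $\partial_{2R}C$, since any $x\in X-C$ with $d(x,C)<2R$ satisfies $d(x,C)\leq 2R$. By hypothesis $\partial_{2R}C\csubset A$, so $N_R(C)\cap N_R(X-C)\csubset A$; as $R$ was arbitrary, $C\ccap(X-C)\csubset A$.

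\textbf{Main obstacle.} There really is no substantive obstacle here; the lemma is a translation between two formulations of the same ``coarse frontier'' notion, and the only bookkeeping is to be careful with strict versus non-strict inequalities in the definitions of $N_R(\cdot)$ and $\partial_r$, which is absorbed by harmless $+1$'s and a factor of $2$ in the scale parameter.
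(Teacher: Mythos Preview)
Your proof is correct and follows exactly the route the paper indicates: the paper merely says the lemma ``follows easily from Lemma~\ref{l:nbds}'', and you have spelled out precisely that derivation, handling the strict/non-strict inequalities correctly. There is nothing to add.
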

We will call a collection $\{C_{\ga}\}$ of coarse complementary components \emph{uniform} if for any $r$ there exists $R$ such that $\d_r(C_{\ga}) \subset[R] A$ for all $C_{\ga}$.
Since $\d_r$ of an arbitrary union is contained in the union of $\d_{r}$'s, Lemma~\ref{l:complement criterion} implies
\begin{Lemma}\label{l:union}
    If $\{C_{\ga}\}$ is uniform, then its union $\bigcup C_{\ga}$ is a coarse complementary component of $A$.
\end{Lemma}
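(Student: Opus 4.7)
The plan is to apply the criterion of Lemma~\ref{l:complement criterion}, which reduces the claim to verifying that for every $r \geq 0$, the $r$-boundary $\partial_r\bigl(\bigcup_\ga C_\ga\bigr)$ is coarsely contained in $A$. So I first fix an arbitrary $r$ and try to control this $r$-boundary using the individual boundaries $\partial_r(C_\ga)$, whose coarse behavior is governed by the uniformity hypothesis.

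The key set-theoretic observation, already flagged in the text just before the statement, is that
\[
\partial_r\Bigl(\bigcup_\ga C_\ga\Bigr) \subset \bigcup_\ga \partial_r(C_\ga).
\]
To see this, take any $x \in \partial_r\bigl(\bigcup_\ga C_\ga\bigr)$. By definition $x \notin \bigcup_\ga C_\ga$ and $d\bigl(x, \bigcup_\ga C_\ga\bigr) \leq r$, so there exists some index $\ga$ with $d(x, C_\ga) \leq r$. Since $x$ lies in no $C_\ga$, this forces $x \in \partial_r(C_\ga)$, proving the inclusion.

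Now I invoke uniformity: by hypothesis, applied to the chosen $r$, there is an $R = R(r)$ such that $\partial_r(C_\ga) \subset[R] A$ for every $\ga$. Taking the union over $\ga$ preserves this single constant, giving $\bigcup_\ga \partial_r(C_\ga) \subset[R] A$, and combining with the inclusion above yields $\partial_r\bigl(\bigcup_\ga C_\ga\bigr) \subset[R] A$. Since $r$ was arbitrary, Lemma~\ref{l:complement criterion} then tells us that $\bigcup_\ga C_\ga$ is a coarse complementary component of $A$.

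There is essentially no obstacle here: the whole content is in the boundary-of-union inclusion, which is entirely elementary, together with the fact that uniformity is precisely what lets a single $R$ handle the possibly infinite union. The one thing worth being careful about is not confusing coarse containment of each $\partial_r(C_\ga)$ in $A$ (which would give different constants $R_\ga$ for each $\ga$ and would not suffice for an arbitrary union) with the uniform version in the hypothesis.
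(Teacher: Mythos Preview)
Your proof is correct and follows exactly the approach indicated in the paper: the boundary-of-union inclusion combined with Lemma~\ref{l:complement criterion} and the uniform bound. You have simply spelled out in detail what the paper states in a single sentence.
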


Let $\cC_{A}$ denote the collection of all coarse complementary components of $A$.
As before, each $C \in \cC_{A}$ determines a separation $\{C, X-C\} \in \cS$, we will refer to it as a \emph{separation of $X$ with respect to $A$}.
Such a separation is \emph{deep} if both $C$ and $X-C$ are deep, and \emph{shallow} otherwise.
We will say that \emph{$A$ separates $X$} if there exists a deep separation of $X$ with respect to $A$.
Let $\cS_A$ denote the collection of all such separations, and let $\cS\cS_A$ be the subcollection of shallow separations.

By Lemma~\ref{l:union}, $\cC_{A}$ is closed under finite unions, and since it is closed under the complementation, it follows that $\cC_{A}$ is a subalgebra of the boolean algebra $2^{X}$, and $\cS\cS_{A}$ and  $\cS_{A}$ are subgroups of $\cS$.
Let $\cD\cS_A=\cS_A/\cS\cS_A$, its nonzero elements are equivalence classes of deep separations of $X$ with respect to $A$ modulo shallow ones.

Note that $1_{C} \in \Cx[0](X-A; \zz/2)$ precisely when $C$ is shallow, so $F$ maps isomorphically $\cS\cS_{A}$ onto $BX^{1}( X-A; \zz/2)$.
Moreover,
\[
\Supp{d(1_{C}) } \ccap \gD_{X} =(C\times (X-C) \cup (X-C) \times X) \ccap \gD = \gD_{C} \ccap \gD_{X-C},
\]
so $F$ restricts to an isomorphism $F: \cS_{A} \to ZX^{1}( X-A; \zz/2)$.

Thus we have:
\begin{Prop}\label{p:Hx and separation}
    $F$ induces an isomorphism $\cD\cS_A \to \Hx[1](X-A; \zz/2)$.
\end{Prop}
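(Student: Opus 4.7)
The plan is to deduce the statement directly from the two restriction isomorphisms that the discussion preceding the proposition already supplies: $F$ sends $\cS_A$ bijectively onto $ZX^{1}(X-A; \zz/2)$ and $\cS\cS_A$ bijectively onto $BX^{1}(X-A; \zz/2)$. Given these, the proposition reduces to a formal quotient argument.

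First I would verify the two nested inclusions needed to form the quotients. On the separations side, $\cS\cS_A \subseteq \cS_A$ holds because a shallow $C$ with $C \csubset A$ satisfies $C \ccap (X-C) \csubset C \csubset A$, so it is automatically a coarse complementary component of $A$. On the cocycle side, $BX^{1}(X-A; \zz/2) \subseteq ZX^{1}(X-A; \zz/2)$ follows from $d^{2} = 0$ combined with the fact that $d$ preserves the support condition cutting out $\Cx[*](X-A; \zz/2)$.

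With these containments in hand, the group isomorphism $F:\cS \to Z^{1}(X;\zz/2)$ restricts compatibly to both subgroup pairs and therefore descends to an isomorphism of quotients. The left-hand side is $\cS_A/\cS\cS_A = \cD\cS_A$ by definition, and the right-hand side is $ZX^{1}(X-A; \zz/2)/BX^{1}(X-A; \zz/2) = \Hx[1](X-A; \zz/2)$, which is the claim.

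The main obstacle is not in the formal quotient step above, but in the two restriction isomorphisms that it relies on. The subtle calculation behind the cocycle identification is $\Supp{d(1_C)} \ccap \gD = \gD_C \ccap \gD_{X-C}$, which is what translates the coarse-cochain support condition $\Supp{d(1_C)} \ccap \gD \csubset \gD_A$ into the geometric condition $C \ccap (X-C) \csubset A$ defining a coarse complementary component. If I had to derive the restriction isomorphisms from scratch, this translation between the boolean-algebra description of separations and the coarse-support conditions on cochains is where I would concentrate the care; everything after that is bookkeeping.
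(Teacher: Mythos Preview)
Your proposal is correct and matches the paper's approach exactly. The paper establishes the two restriction isomorphisms in the discussion immediately preceding the proposition and then states the proposition with no further proof (just ``Thus we have:''), leaving the quotient step implicit; you have simply made that step explicit. One minor remark: the inclusion $\cS\cS_A \subseteq \cS_A$ is actually by definition in the paper, since $\cS\cS_A$ is introduced as a subcollection of $\cS_A$, so your verification of it, while correct, is not needed.
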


We will need the following observation in Section~\ref{s:PD}.
\begin{Lemma}\label{l:deepsep}
    Suppose $A$ coarsely separates $X$.
    Then any deep coarse complementary component $C$ of $X$ with respect to $A$ contains a deep coarse complementary component $C' \subset C$ such that the separation $\{C', X-C'\}$ is deep.
\end{Lemma}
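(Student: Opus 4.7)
My plan is to exploit the boolean algebra structure on $\cC_A$ together with the hypothesis that $A$ coarsely separates $X$. Since $A$ coarsely separates $X$, by definition there is a coarse complementary component $D$ of $A$ such that both $D$ and $X-D$ are deep. I would like to cut the given deep component $C$ into pieces using $D$, and show that one of these pieces works as $C'$.

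The first step is to observe that $C \cap D$ and $C \cap (X-D)$ both lie in $\cC_A$. Since $\cC_A$ is closed under finite union (Lemma~\ref{l:union}) and obviously under complementation, it is a boolean subalgebra of $2^X$ and hence closed under intersections. (Alternatively, from $\d_r(C \cap D) \subset \d_r C \cup \d_r D$ and Lemma~\ref{l:complement criterion}, one sees directly that $C \cap D$ is a coarse complementary component of $A$.)

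Next, since $C = (C \cap D) \cup (C \cap (X-D))$ and $C$ is not coarsely contained in $A$, at least one of the two pieces is deep: otherwise both would be coarsely contained in $A$, forcing $C \csubset A$, a contradiction. Without loss of generality suppose $C \cap D$ is deep; the other case is completely symmetric. Set $C' := C \cap D$. Then $C' \subset C$, $C' \in \cC_A$, and $C'$ is deep by choice. It remains to check that the complement $X - C' = (X - C) \cup (X - D)$ is deep, but this is immediate since $X - D$ is deep by hypothesis and $X - D \subset X - C'$. Hence $\{C', X - C'\}$ is a deep separation, as required.

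The argument is essentially formal once one notices the boolean algebra structure; the only mild obstacle is the verification that $C \cap D$ really is a coarse complementary component of $A$, which either follows abstractly from closure under intersection or can be checked directly via the $\d_r$-criterion.
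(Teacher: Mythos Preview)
Your proof is correct and follows essentially the same idea as the paper: intersect $C$ with a chosen deep separation $\{D, X-D\}$ to produce $C'$. The paper first disposes of the easy case where $X-C$ is already deep (taking $C'=C$) and only then sets $C'=C\cap D$, whereas you skip that case split and instead argue by symmetry that one of $C\cap D$, $C\cap(X-D)$ must be deep; either way the complement contains the deep side of $\{D,X-D\}$, and the rest is identical.
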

\begin{proof}
    If $X-C$ is deep we can take $C'=C$, so assume that $X-C$ is shallow.
    By hypothesis, we have a deep separation $\{D, X-D\}$.
    Set $C'=C\cap D$.
    Then both $C'=D-(X-C)$ and its complement $X-C'=(X-D)\cup (X-C)$ are deep.
\end{proof}

The next lemma shows that pairwise disjoint collections of deep coarse complementary components bound from below the dimension of $\Hx[1](X-A; \zz/2)$.
\begin{Lemma}\label{l:dim}
    Suppose $\{C_{\ga}\}$ is a pairwise disjoint collection of deep coarse complementary components of $X$ with respect to $A$.
    Then any proper subcollection of $\{C_{\ga}\}$ maps to a linearly independent subset of $\Hx[1](X-A; \zz/2)$.
\end{Lemma}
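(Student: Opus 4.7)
The plan is to translate the claim through the isomorphism $F\colon \cD\cS_A \to \Hx[1](X-A;\zz/2)$ from Proposition~\ref{p:Hx and separation} and reduce it to checking that certain explicit separations are deep.

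First, I would unwind what $\zz/2$-linear independence means in this setting. Writing $J$ for the index set of the given proper subcollection, the images $F([\{C_\beta, X-C_\beta\}])$ for $\beta \in J$ are linearly independent in $\Hx[1](X-A;\zz/2)$ if and only if, for every nonempty finite $T \subset J$, the sum $\sum_{\beta \in T}[\{C_\beta, X-C_\beta\}]$ is nonzero in $\cD\cS_A = \cS_A/\cS\cS_A$. Since addition in the boolean algebra $2^X$ is symmetric difference, and since the $C_\beta$'s are pairwise disjoint, this sum is represented by the separation $\{C_T, X-C_T\}$, where $C_T := \bigcup_{\beta \in T} C_\beta$.

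Next I would verify that this separation actually lies in $\cS_A$. The finite collection $\{C_\beta\}_{\beta \in T}$ is automatically uniform, since one may take the maximum of the finitely many bounds provided by the fact that each $C_\beta$ is a coarse complementary component. Lemma~\ref{l:union} then implies $C_T \in \cC_A$, so $\{C_T, X-C_T\} \in \cS_A$.

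The heart of the argument is to check that this separation is deep, which is where the properness of the subcollection enters. The set $C_T$ is deep because it contains each deep $C_\beta$ for $\beta \in T$. For the complement, since $J$ is a proper subcollection of the full index set $I$, pick any $\gamma \in I \setminus J$; then $C_\gamma$ is disjoint from every $C_\beta$ with $\beta \in T$, so $C_\gamma \subset X - C_T$. As $C_\gamma$ is deep by hypothesis, so is $X - C_T$. Hence $\{C_T, X-C_T\}$ is a deep separation, represents a nonzero class in $\cD\cS_A$, and therefore $F$ sends the subcollection to a linearly independent set.

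The only mild obstacle I anticipate is bookkeeping around the identification: being careful that $\cS$ is a \emph{quotient} of $2^X$ by $\{\emptyset, X\}$ so that the sum is indeed represented by $\{C_T, X-C_T\}$ rather than by any sign-ambiguous complement. Once that is clear, the use of properness to produce the witness $C_\gamma$ for deepness of the complement is immediate.
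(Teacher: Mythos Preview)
Your proposal is correct and follows essentially the same approach as the paper: show that any nonempty finite sum of the $C_\beta$'s in the proper subcollection equals their union by disjointness, that this union is deep, and that its complement is deep because it contains a $C_\gamma$ from outside the subcollection. The paper's proof is terser (it relies on the already-established fact that $\cS_A$ is a subgroup rather than re-invoking Lemma~\ref{l:union}), but the content is identical.
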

\begin{proof}
    Since the coefficients are $Z/2$, we need to show that any finite sum $\sum_{i=1}^{n} C_{\ga_{i}}$ of the elements of the subcollection maps nontrivially.
    Disjointness implies that the sum is just the union, so it is deep, and so is its complement, since it contains a deep element not in the subcollection.
    Therefore the corresponding separation is not in $\cS\cS_{A}$.
\end{proof}

We now specialize to the case of geodesic metrics.
\begin{Prop}\label{p:uniform}
    If $X$ is a geodesic space and $A\subset X$, then for any $r$, the collection of path components of $X-N_r(A)$ is a uniform collection of coarse complementary components.
\end{Prop}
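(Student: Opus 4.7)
The plan is to fix $r$ and a scale $s>0$, and show directly that every path component $C_\alpha$ of $X-N_r(A)$ satisfies $\partial_s(C_\alpha) \subset[s+r+1] A$, with the constant $s+r+1$ independent of $\alpha$. Coarse complementariness of each $C_\alpha$ will then follow from Lemma~\ref{l:complement criterion}, and uniformity is built into the estimate.

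To carry this out, take any $y\in \partial_s(C_\alpha)$. By definition $y\notin C_\alpha$ and $d(y,C_\alpha)\leq s$, so for an arbitrary $\varepsilon>0$ we may pick $x\in C_\alpha$ with $d(x,y)<s+\varepsilon$. Since $X$ is geodesic, there is a geodesic $\gamma:[0,L]\to X$ with $\gamma(0)=x$, $\gamma(L)=y$, and $L=d(x,y)<s+\varepsilon$. The key dichotomy is:
\begin{itemize}
\item If the image of $\gamma$ lies entirely in $X-N_r(A)$, then $\gamma$ is a path in $X-N_r(A)$ starting at $x\in C_\alpha$, so its whole image lies in the path component $C_\alpha$. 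But this forces $y=\gamma(L)\in C_\alpha$, contradicting $y\in \partial_s(C_\alpha)$.
\item Otherwise, there exists $t\in[0,L]$ with $\gamma(t)\in N_r(A)$, so we can choose $a\in A$ with $d(\gamma(t),a)<r$. Then
\[
d(y,A)\leq d(y,\gamma(t))+d(\gamma(t),a) \leq (L-t)+r < s+\varepsilon+r.
\]
\end{itemize}
Letting $\varepsilon\to 0$ gives $d(y,A)\leq s+r$, hence $\partial_s(C_\alpha)\subset N_{s+r+1}(A)$, i.e.\ $\partial_s(C_\alpha)\subset[s+r+1]A$, where the constant $s+r+1$ depends only on $s$ and $r$, not on $\alpha$.

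This simultaneously verifies Lemma~\ref{l:complement criterion}'s criterion for each $C_\alpha$ to be a coarse complementary component of $A$ and the uniformity of the family $\{C_\alpha\}$. There is no real obstacle; the only subtle point is the first bullet, which uses that a continuous path in $X-N_r(A)$ starting at $x\in C_\alpha$ stays inside the path component of $x$ by the very definition of path component, even though path components of a closed set need not themselves be open.
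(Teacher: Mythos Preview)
Your proof is correct and follows essentially the same approach as the paper's: both fix $r$, take a point in $\partial_s(C_\alpha)$, connect it by a geodesic to a nearby point of $C_\alpha$, and argue that the geodesic must enter $N_r(A)$, yielding a uniform bound $\partial_s(C_\alpha)\subset[s+r]A$ (the paper writes $R+r$). Your $\varepsilon$-argument is slightly more careful about the infimum in $d(y,C_\alpha)\le s$, but otherwise the arguments are the same.
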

\begin{proof}
    Fix $r$, and let $C$ be a path component of $X-N_r(A)$.
    We will show that for any $R$, $\partial_R(C) \subset[R+r] A$.

    Let $x\in \partial_R(C)$.
    Moreover, assume that $x\notin N_r(A)$, otherwise there is nothing to prove.
    Then there exists $y \in C$ such that $d(x, y)\leq R$.
    Since $x \notin C$ and $y \in C$, $x$ and $y$ are in different path components of $X-N_r(A)$, thus a geodesic between $x$ and $y$ must dip into $N_{r}(A)$: there exists $z$ on the geodesic with $d(z, A) < r$.
    Note that $d(x, z) \leq d(x, y) \leq R$.
    Then by triangle inequality $d(x, A) \leq d(x, z) + d(z, A) < R+r$, as claimed.
\end{proof}
\begin{Prop}\label{p:geo}
    Let $X$ be a geodesic space and $A, C\subset X$.
    The following are equivalent:
    \begin{enumerate}
        \item\label{i:c-set}
        $C$ is a coarse complementary component of $A$.
        \item\label{i:r exists}
        There exist $r$ such that $C-N_r(A)$ is a union of path components of $X-N_r(A)$.
        \item\label{i:any r}
        There exist $r$ such that for any $R\geq r$, $C-N_R(A)$ is a union of path components of $X-N_R(A)$.
    \end{enumerate}
\end{Prop}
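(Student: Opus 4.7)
The plan is to prove the cyclic chain \eqref{i:any r}$\Rightarrow$\eqref{i:r exists}$\Rightarrow$\eqref{i:c-set}$\Rightarrow$\eqref{i:any r}. The first implication is trivial, via $R=r$. The geodesic hypothesis will enter the second implication through Proposition~\ref{p:uniform}, while continuity of paths is the crux of the third.

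For \eqref{i:r exists}$\Rightarrow$\eqref{i:c-set}, I would set $D = C - N_r(A)$. By hypothesis, $D$ is a union of path components of $X - N_r(A)$, so Proposition~\ref{p:uniform} exhibits these components as a uniform collection of coarse complementary components of $A$, and Lemma~\ref{l:union} gives that $D$ itself is a coarse complementary component. To upgrade this to $C$, I would write $C = D \cup (C \cap N_r(A))$ and check via Lemma~\ref{l:complement criterion} that $\partial_R C \csubset A$ for every $R$: any $x \in \partial_R C$ lies within $R$ of some $y \in C$, and a case split on $y \in D$ versus $y \in C \cap N_r(A)$ yields $\partial_R C \subset \partial_R D \cup N_{R+r}(A)$, which is coarsely contained in $A$.

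The substantive step is \eqref{i:c-set}$\Rightarrow$\eqref{i:any r}. Using Lemma~\ref{l:complement criterion} I would first pick $r$ with $\partial_1 C \subset N_r(A)$, and for any $R \geq r$ show that every path component $P$ of $X - N_R(A)$ is either contained in $C$ or disjoint from $C$; this immediately gives that $C - N_R(A) = C \cap (X - N_R(A))$ is a union of such components. Suppose toward a contradiction that some $P$ meets both $C$ and $X - C$, and choose a continuous path $\gamma : [0,1] \to P$ from a point of $C$ to a point of $X - C$. Set $t^* = \sup\{t \in [0, 1] \mid \gamma(t) \in C\}$ and split on whether $\gamma(t^*) \in C$. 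If $\gamma(t^*) \in C$, continuity yields some $t > t^*$ with $d(\gamma(t), \gamma(t^*)) \leq 1$ and $\gamma(t) \notin C$, putting $\gamma(t) \in \partial_1 C$; otherwise there are $t_n \to t^{*-}$ with $\gamma(t_n) \in C$, which puts $\gamma(t^*) \in \partial_1 C$. In either case $\partial_1 C \subset N_r(A) \subset N_R(A)$ contradicts the inclusion $\gamma([0, 1]) \subset P \subset X - N_R(A)$.

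The main obstacle is the last continuity argument: because $C$ is not assumed to be open or closed, the transition value $t^*$ may belong to either $C$ or $X - C$, and both cases must be handled separately to produce a point of $\partial_1 C$ on the image of $\gamma$. Everything else reduces to routine bookkeeping with coarse neighborhoods via Lemmas~\ref{l:complement criterion} and~\ref{l:union}.
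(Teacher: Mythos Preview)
Your proof is correct and follows the same cyclic scheme as the paper, invoking Proposition~\ref{p:uniform} and Lemma~\ref{l:union} for \eqref{i:r exists}$\Rightarrow$\eqref{i:c-set} (where you spell out the passage from $C-N_r(A)$ back to $C$ that the paper leaves implicit) and Lemma~\ref{l:complement criterion} for \eqref{i:c-set}$\Rightarrow$\eqref{i:any r}. The only stylistic difference is in the last step: the paper discretizes the path into a finite sequence of points at pairwise distance $<1$ and inducts, whereas you argue via the supremum on the continuous path; both produce a point of $\partial_1 C$ on the path and reach the same contradiction.
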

\begin{proof}
    $\eqref{i:any r}\Rightarrow \eqref{i:r exists}$ is clear.

    $\eqref{i:r exists}\Rightarrow \eqref{i:c-set}$ follows from Proposition~\ref{p:uniform}.

    $\eqref{i:c-set}\Rightarrow \eqref{i:any r}$ Let $C$ be a coarse complementary component of $A$ in $X$, then there exists $r$ such that $\partial_1(C)\subset N_{r-1}(A)$.
    Let $R \geq r$.
    We claim that $C':=C-N_{R}(A)$ is a union of path components of $X-N_{R}(A)$.

    Suppose $x \in C'$ and $y \in X-N_{R}(A)$ are connected by a path in $X-N_{R}(A)$.
    We need to show that $y \in C'$.
    Choose points $\{x=x_0, \dots, x_n=y\}$ on the path so that $d(x_i, x_{i+1}) < 1$ for all $i$.
    Suppose $x_i\in C'$, then $d(x_{i+1}, A) \geq d(x_i, A)-d(x_i, x_{i+1}) > R-1 \geq r-1$, so $x_{i+1} \notin N_{r-1}(A)$, and therefore $x_{i+1} \notin \partial_1(C)$.
    On the other hand, $d(x_{i+1}, C) \leq d(x_i, x_{i+1}) < 1$.
    Combining these together we get $x_{i+1}\in C'$.
    So by induction $y \in C'$.
\end{proof}
\begin{Theorem}\label{t:geo}
    Let $X$ be a geodesic space and $A\subset X$, and suppose $k=\dim_{\zz/2} \Hx[1](X-A; \zz/2)$ is finite.
    Then, for any $r$ the union of shallow path components of $X-N_r(A)$ is shallow: they all are contained in $N_{R}(A)$ for some $R$.
    Moreover, there exist $r$ such that for any $R\geq r$, $X-N_R(A)$ has exactly $k+1$ deep path components.
\end{Theorem}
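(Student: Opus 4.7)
The plan is to prove both claims together by tracking, for each scale $r$, the number $m(r)$ of deep path components of $X-N_r(A)$ and the union $U(r)$ of its shallow path components. Throughout, Proposition~\ref{p:uniform} gives that the collection of path components of $X-N_r(A)$ is uniform, so Lemma~\ref{l:union} implies that any subcollection has union which is a coarse complementary component of $A$; in particular $U(r)$ is, as is $D_S:=\bigcup_{j\in S}D_j$ for any subset $S\subset\{1,\dots,m(r)\}$ of the deep path components $D_1,\dots,D_{m(r)}$.

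I expect the first assertion, that $U(r)$ is shallow for every $r$, to be the main obstacle, since no direct path-connectivity argument produces it. My approach is by contradiction: if $U(r)$ is deep, then the depths $\mathrm{depth}(P):=\sup_{p\in P}d(p,A)$ of the shallow path components $P$ cannot be bounded. A short counting argument (if only finitely many shallow $P$ have $\mathrm{depth}(P)\geq n$, then $U(r)$ lies in a single metric neighborhood of $A$, hence is shallow) then shows that for every $n$, infinitely many shallow path components have depth at least $n$. Choose distinct shallow path components $P_1,P_2,\dots$ with $\mathrm{depth}(P_n)\geq n$, and partition them into countably many infinite subfamilies $\{I_j\}_{j\geq 1}$. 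Each $V_j:=\bigcup_{P\in I_j}P$ is a coarse complementary component by Lemma~\ref{l:union}, and is deep because $I_j$ contains components of arbitrarily large depth. The $V_j$ are pairwise disjoint and deep, so Lemma~\ref{l:dim} yields arbitrarily many linearly independent classes in $\Hx[1](X-A;\zz/2)$, contradicting $k<\infty$.

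Granted the first claim, the upper bound $m(r)\leq k+1$ follows by packaging the deep path components into separations. Using that $N_r(A)\cup U(r)$ is shallow, each proper nonempty $S\subset\{1,\dots,m(r)\}$ yields a deep separation $\{D_S,X-D_S\}$, and $S\mapsto[\{D_S,X-D_S\}]$ defines a $\zz/2$-linear map $(\zz/2)^{m(r)}\to\cD\cS_A\cong\Hx[1](X-A;\zz/2)$ whose kernel is exactly $\{\emptyset,\{1,\dots,m(r)\}\}$ (using Proposition~\ref{p:Hx and separation} for the isomorphism); comparing dimensions gives $m(r)-1\leq k$.

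For the matching lower bound at large scales, I pick deep separations $\{C_1,X-C_1\},\dots,\{C_k,X-C_k\}$ whose classes form a basis of $\Hx[1](X-A;\zz/2)$ and apply Proposition~\ref{p:geo} to obtain $r_0$ such that for every $R\geq r_0$ and each $i$, $C_i-N_R(A)$ is a union of path components of $X-N_R(A)$. Setting $S_i:=\{j:D_j\subset C_i\}$, the symmetric difference $C_i\symdiff D_{S_i}$ is contained in $N_R(A)\cup U(R)$ and hence shallow by the first part, so $[\{C_i,X-C_i\}]=[\{D_{S_i},X-D_{S_i}\}]$ in $\cD\cS_A$. Linear independence of the $C_i$-classes then forces the $S_i$'s to be linearly independent modulo $\{\emptyset,\{1,\dots,m(R)\}\}$, giving $k\leq m(R)-1$. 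Together with the upper bound, $m(R)=k+1$ for all $R\geq r_0$.
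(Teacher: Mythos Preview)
Your proof is correct and follows essentially the same route as the paper: the first claim is proved by contradiction using Lemma~\ref{l:dim} on infinitely many pairwise disjoint deep unions of shallow path components, the upper bound comes from a dimension count, and the lower bound uses Proposition~\ref{p:geo} to realize a basis of $\Hx[1](X-A;\zz/2)$ by unions of deep path components at large scale. Your packaging of the upper and lower bounds through the single homomorphism $(\zz/2)^{m(r)}\to\cD\cS_A$ with kernel $\{\emptyset,\{1,\dots,m(r)\}\}$ is a clean consolidation of what the paper does in two separate steps (a direct appeal to Lemma~\ref{l:dim} for the upper bound and a $2^{k+1}$-counting argument for the lower bound), but the underlying ideas are the same.
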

\begin{proof}
    If, for some $r$, the union of shallow path components is not shallow, then there is a sequence $\{C_i\}$ of shallow path components of $X-N_r(A)$ such that $C_i$ is not contained in $N_i(A)$ for all $i\in \nn$.
    Then the union of any subsequence is a deep complementary component, and since there are infinitely many subsequences whose unions are pairwise disjoint, $\dim_{\zz/2} \Hx[1](X-A; \zz/2)$ is infinite by Lemma~\ref{l:dim}.
    This proves the first claim.

    To prove the second, we observe that again by Lemma~\ref{l:dim} for any $R$, there exists at most $k+1$ deep path components in $X-N_R(A)$.
    For each of $2^{k}$ elements of $\Hx[1](X-A; \zz/2)$ we pick a separation which maps onto it, and take both of its coarse complementary components.
    So we get $2^{k+1}$ distinct coarse complementary components.
    By Proposition~\ref{p:geo} there exits $r$ such that for any $R\geq r$ these coarse complementary components with $N_{R}(A)$ removed are unions of path components of $X-N_{R}(A)$.
    Next we remove all shallow path components to obtain $2^{k+1}$ coarse complementary components, each of which is a union of deep path components of $X-N_{R}(A)$.
    Since any union of shallow path components is shallow, the removals do not affect the image in cohomology, and therefore all these unions are still distinct.
    Hence, $X-N_{R}(A)$ has at least $k+1$ deep path components.
    This finishes the proof.
\end{proof}

\section{Alexander Duality}\label{s:PD}
From now on, we will be mostly interested in integral (co)homology, so the omitted coefficients are $\zz$.
\begin{Definition}\label{def pdn}
    A metric space $X$ is a \emph{coarse Poincaré duality space of formal dimension $n$} (\emph{coarse PD($n$) space} for short), if there exist chain maps $p: \C(X) \to \cx[n-*](X)$ and $q: \cx[n-*](X) \to \C(X)$, so that $pq$ and $qp$ are chain homotopic to identities via chain homotopies $G:\cx(X) \to \cx[*+1](X)$ and $F: \C(X) \to \C[*-1](X)$ which are controlled:
    \begin{align*}
        \forall \phi\in \C(X) &\qquad \Supp{ p(\phi)} \csubset \Supp{\phi}, \\
        \forall \phi\in \C(X) &\quad \Supp{ F(\phi)} \ccap \gD \csubset \Supp{\phi} , \\
        \forall c \in \cx(X) &\quad \Supp{q(c)} \ccap \gD \csubset \Supp{c}, \\
        \forall c\in \cx(X) &\qquad \Supp{ G(c)} \csubset \Supp{c}.
    \end{align*}
\end{Definition}

The next lemma shows that the maps in the definition of a coarse PD($n$) space restrict to maps between $\Cx(X-A) $ and $\cx(A\subset X)$.
\begin{Lemma}\label{l:res}
    \setcounter{equation}{0}
    \begin{align}\label{i:p}
        p (\Cx(X-A)) &\subset \cx[n-*] (A \subset X) & p (\Cx(A \subset X)) &\subset \cx[n-*] (X-A)\\\label{i:F}
        F(\Cx(X-A)) &\subset \Cx[*-1](X-A) & F(\Cx(A \subset X)) &\subset \Cx[*-1](A \subset X) \\\label{i:q}
        q (\cx(A \subset X)) &\subset \Cx[n-*](X-A) & q(\cx(X-A)) &\subset \Cx[n-*](A \subset X) \\\label{i:G}
        G(\cx[*](A \subset X)) &\subset \cx[*+1](A \subset X) & G(\cx[*](X-A)) &\subset \cx[*+1](X-A)
    \end{align}
\end{Lemma}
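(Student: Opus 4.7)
The plan is to verify each of the eight inclusions by a direct support chase using Lemma~\ref{l:subcap} together with the four controlled-support conditions of Definition~\ref{def pdn}. The cases split naturally by rows: $p$ and $q$ swap the two complement types while $F$ and $G$ preserve them, and within each row the two statements share the same template. The only thing to watch is which of the two forms of support control applies.

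For the $p$-row, take $\phi \in \Cx(X-A)$. Then $p(\phi) \in \cx[n-*](X)$ gives $\Supp{p(\phi)} \csubset \gD$, and by Definition~\ref{def pdn} we also have $\Supp{p(\phi)} \csubset \Supp{\phi}$. Combined with the hypothesis $\Supp{\phi} \ccap \gD \csubset \gD_A$, Lemma~\ref{l:subcap} yields $\Supp{p(\phi)} \csubset \gD_A$, i.e.\ $p(\phi) \in \cx[n-*](A \subset X)$. For the companion case $\phi \in \Cx(A \subset X)$, the same $\Supp{p(\phi)} \csubset \Supp{\phi}$ pushes $\Supp{p(\phi)} \ccap \gD_A$ into the bounded set $\Supp{\phi} \ccap \gD_A$, so $p(\phi) \in \cx[n-*](X-A)$. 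The $G$-row is formally identical, with $\Supp{G(c)} \csubset \Supp{c}$ in place of the control on $p$; local finiteness is automatic because $G$ already lands in $\cx[*+1](X)$.

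For the $F$- and $q$-rows the given control is the weaker $\Supp{F(\phi)} \ccap \gD \csubset \Supp{\phi}$ (respectively $\Supp{q(c)} \ccap \gD \csubset \Supp{c}$), but since the target conditions on $\Cx$ themselves involve a coarse intersection with $\gD$ or $\gD_A$, and since $\gD_A \subset \gD$ gives $S \ccap \gD_A \csubset S \ccap \gD$ for any $S$, the same Lemma~\ref{l:subcap} template applies. Concretely, for $\phi\in\Cx(X-A)$ one applies Lemma~\ref{l:subcap} to $(\Supp{F(\phi)}\ccap \gD,\Supp{\phi},\gD,\gD_A)$; for $\phi\in\Cx(A\subset X)$ one applies it to $(\Supp{F(\phi)}\ccap \gD_A,\Supp{\phi},\gD_A,*)$, using first the $\gD_A\subset\gD$ observation to obtain the needed $\csubset \Supp{\phi}$. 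The $q$ statements run identically with chain hypotheses replacing cochain ones.

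There is no substantive obstacle; the content of the lemma is essentially that the four controls of Definition~\ref{def pdn} are stated in exactly the form that makes each of the eight propagations a one-step application of Lemma~\ref{l:subcap}. The only care needed is to avoid confusing the ``strong'' controls ($p$, $G$) with the ``weak'' ones ($F$, $q$), and to remember that $\gD_A \subset \gD$ is precisely what keeps the weak form sufficient for the $F$- and $q$-rows.
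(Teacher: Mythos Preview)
Your proposal is correct and follows essentially the same approach as the paper: a case-by-case support chase using the four control conditions of Definition~\ref{def pdn} together with Lemma~\ref{l:subcap}. Your organization into ``strong'' controls ($p$, $G$) versus ``weak'' controls ($F$, $q$), and the explicit observation that $\gD_A \subset \gD$ is what makes the weak form sufficient, is slightly more systematic than the paper's presentation, but the underlying logic is identical.
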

\begin{proof}
    The proof is a routine exercise in coarse set theory.

    \eqref{i:p} By assumption on $p$, $\Supp{p(\phi)} \csubset \Supp{\phi} $.
    Since $p$ maps into $\cx[n-*](X)$, $\Supp{p(\phi)} \csubset \gD $.
    If $\phi \in \Cx(X-A)$, then $\Supp{\phi} \ccap \gD \csubset \gD_A$.
    Combining these using Lemma~\ref{l:subcap} we get $\Supp{p(\phi)} \csubset \gD_A $ and thus $p(\phi) \in \cx[n-*](A \subset X)$.

    If $\phi \in \Cx(A \subset X)$, then $\Supp{\phi} \ccap \gD_{A} \csubset *$.
    So $\Supp{p(\phi)} \csubset \gD_{X} \cminus \gD_A $ and thus $p(\phi) \in \cx[n-*](X-A)$.

    \eqref{i:F} By assumption on $F$, $\Supp{ F(\phi)} \ccap \gD \csubset \Supp{\phi}$.
    If $\phi \in \Cx(X-A)$ then $\Supp{\phi} \ccap \gD \csubset \gD_A$.
    Combining these we get $\Supp{F(\phi)} \ccap \gD \csubset \gD_A $.
    So $F(\phi) \in \Cx[*-1](X-A)$.

    If $\phi \in \Cx(A \subset X)$, then $\Supp{\phi} \ccap \gD_{A} \csubset *$.
    It follows that $\Supp{ F(\phi)} \ccap \gD_{A} \csubset *$ and $F(\phi) \in \Cx[*-1](X-A)$.

    \eqref{i:q} Let $c \in \cx(A \subset X)$, so $\Supp{c} \csubset \gD_{A}$.
    By assumption on $q$, $\Supp{q(c)}\ccap \gD \csubset \Supp{c} $.
    Thus $\Supp{q(c)} \csubset \gD_{A} $.
    Hence $q(c) \in \Cx[n-*](X-A)$.

    Let $c \in \Cx(X-A)$ then $\Supp{c} \ccap \gD_A \csubset *$ Hence by the property of $q$, $\Supp{q(c)}\ccap \gD_A \csubset \Supp{c}$.
    By Lemma \ref{l:subcap}, we have $q(c)\in \Cx[n-*](A\subset X)$.

    \eqref{i:G} Let $c \in \cx(A \subset X)$, so $\Supp{c} \csubset \gD_{A}$.
    By assumption on $G$, $\Supp{G(c)} \csubset \Supp{c} $.
    Thus $\Supp{G(c)} \csubset \gD_A $.
    Hence $G(c) \in \cx[*+1](A\subset X)$.

    Let $c\in \Cx(X-A)$, so $\Supp{c} \ccap \gD_A \csubset *$.
    By assumption on $G$, $\Supp{G(c)} \csubset \Supp{c} $.
    Combining these, we get $\Supp{G(c)} \ccap \gD_A \csubset *$.
\end{proof}
\begin{Theorem}[Coarse Alexander Duality]\label{t:pd}
    If $X$ is a coarse PD($n$) space, then for any $A\subset X$ and any finitely generated abelian group $G$
    \begin{align*}
        \Hx[k](X-A; G) &\cong \hx[n-k](A; G), \\
        \hx[k](X-A; G) &\cong \Hx[n-k](A; G).
    \end{align*}
\end{Theorem}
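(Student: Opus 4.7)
The plan is to derive the duality directly from Lemma~\ref{l:res}, first for $G=\zz$, and then bootstrap to an arbitrary finitely generated abelian group.

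For the integer case, I would argue that the chain maps $p,q$ and the chain homotopies $F,G$ of Definition~\ref{def pdn} are $\zz$-linear maps on the ambient complexes, and the four columns of Lemma~\ref{l:res} show that each of them restricts to the four subcomplexes $\Cx(X-A)$, $\cx(A\subset X)$, $\Cx(A\subset X)$, $\cx(X-A)$. Reading the first column, $p$ and $q$ restrict to chain maps $\Cx[k](X-A) \to \cx[n-k](A\subset X)$ and $\cx[n-k](A\subset X) \to \Cx[k](X-A)$, with $F$ and $G$ restricting to chain homotopies witnessing that $qp$ and $pq$ are homotopic to the identities on these subcomplexes. Crucially, since the identities $qp-\id = dF+Fd$ and $pq-\id = \partial G + G\partial$ are equalities of $\zz$-linear maps on the full ambient complexes, they automatically restrict to any subcomplex on which both sides are defined, so no separate verification is needed. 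Passing to (co)homology and invoking Lemma~\ref{l:equiv1} to identify $\hx(A\subset X) \cong \hx(A)$ then yields $\Hx[k](X-A)\cong \hx[n-k](A)$. Repeating the argument using the second column of Lemma~\ref{l:res} gives a chain homotopy equivalence between $\Cx(A\subset X)$ and $\cx[n-*](X-A)$; taking (co)homology and identifying $\Hx(A\subset X)\cong \Hx(A)$ produces the second isomorphism $\hx[k](X-A)\cong \Hx[n-k](A)$.

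To upgrade to a finitely generated coefficient group $G$, I would use a finite free resolution $0\to F_1 \to F_0 \to G \to 0$ together with the identifications $\cx(\cdot;F_i)\cong \cx(\cdot;\zz)\otimes F_i$ and $\Cx(\cdot;F_i)\cong \Cx(\cdot;\zz)\otimes F_i$ from the change-of-coefficients subsection. Because $F_0$ and $F_1$ are flat, tensoring the $\zz$-linear chain homotopy equivalences constructed above immediately produces the Alexander duality with $F_0$ or $F_1$ coefficients on each side. The short exact sequence $0\to F_1\to F_0 \to G\to 0$ then induces, on each side of the purported isomorphism, a long exact sequence in coarse (co)homology, and the naturality of the restricted maps $p$, $q$ with respect to the coefficient sequence yields a ladder of commuting squares. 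A five-lemma argument extends the duality from $F_1$ and $F_0$ to $G$.

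I do not expect any serious obstacle: essentially all the hard work has been absorbed into Lemma~\ref{l:res} and the coarse set-theoretic formalism that supports it. The only place where some care is required is checking that the ladder used in the five-lemma genuinely commutes, but this is automatic because $p$ and $q$ are given by universal simplex-wise formulas, so they commute strictly with the coefficient change maps. The bookkeeping between $\hx(A\subset X)$ and $\hx(A)$ (and similarly for cohomology) is purely an invocation of Lemma~\ref{l:equiv1}.
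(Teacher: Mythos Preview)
Your proposal is correct and follows essentially the same route as the paper: both extract the duality from Lemma~\ref{l:res}, using that the maps $p$, $q$ and homotopies $F$, $G$ restrict to the relevant subcomplexes, and then invoke Lemma~\ref{l:equiv1} to pass from $\cx(A\subset X)$ and $\Cx(A\subset X)$ to $\cx(A)$ and $\Cx(A)$.

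The only difference is in the passage to general finitely generated $G$. The paper simply tensors the integral chain homotopy equivalence with $G$, using the identification $\cx(\,\cdot\,;G)\cong\cx(\,\cdot\,;\zz)\otimes G$ (and similarly for cochains) from the change-of-coefficients discussion; you instead route through a free resolution $0\to F_1\to F_0\to G\to 0$ and a five-lemma argument on the induced long exact sequences. Your argument works, but the detour is unnecessary: a chain homotopy equivalence survives tensoring with \emph{any} abelian group, flat or not, because $-\otimes G$ is an additive functor and hence preserves the homotopy identities $qp-\mathrm{id}=dF+Fd$ and $pq-\mathrm{id}=\partial G+G\partial$. So once you have the $\zz$-level equivalence you may apply $-\otimes G$ directly and skip the five lemma entirely.
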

\begin{proof}
    By Lemmas \ref{l:res} and \ref{l:equiv1}, $p\otimes G$ and $q \otimes G$ induce maps between $\Hx(X-A; G)$ and $\hx[n-*](A; G )$, and $\Hx(A; G)$ and $\hx[n-*](X-A; G )$, which are inverses of each other.
\end{proof}
One immediate corollary of the above theorem is the following computation of coarse (co)homology of coarse PD($n$) spaces.
\begin{Corollary}\label{c:HX of PD(n)}
    Suppose $X$ is a coarse PD($n$) space, and $G$ be a finitely generated abelian group.
    Then:
    \begin{enumerate}
        \item\label{i:hx of PD(n)}

        $ \Hx[*](X;G)=\hx[*](X; G)=
        \begin{cases}
            G & *=n,\\
            0 & \text{otherwise}.
        \end{cases}
        $

        \noindent Furthermore, any cycle representing a non-zero element of $\Hx[n](X; G)$ coarsely contains $\gD_{X}$ in its support.
        \item\label{i:hx of subset}
        If $A\subset X$ and $X$ is not coarsely contained in $A$, then $\Hx[n](A; G)=0$.
    \end{enumerate}
\end{Corollary}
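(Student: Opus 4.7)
My plan is to deduce both parts from Theorem~\ref{t:pd}, the computation of $\hx(\emptyset;G)$ and $\Hx(\emptyset;G)$ from Section~\ref{s:cohomology complements}, and the control built into the PD maps.

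For part~\ref{i:hx of PD(n)} I would apply Theorem~\ref{t:pd} with $A=\emptyset$, so that $X-\emptyset=X$, giving
\[
\Hx[k](X;G)\cong\hx[n-k](\emptyset;G),\qquad \hx[k](X;G)\cong\Hx[n-k](\emptyset;G).
\]
Since $\hx[*](\emptyset;G)=\Hx[*](\emptyset;G)=G$ in degree $0$ and $0$ otherwise, both isomorphisms concentrate in degree $n$ with value $G$. For the \emph{furthermore} statement, which I interpret via the identification $\Hx[n](X;G)\cong\hx[n](X;G)$ as the claim that a cycle $c\in\cx[n](X;G)$ representing a nonzero class must satisfy $\gD_X\csubset\Supp{c}$, I would pass through the PD map $q\colon\cx[n-*](X)\to\C(X)$ of Definition~\ref{def pdn}. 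The image $q(c)\in\C[0](X;G)$ is a $0$-cocycle (because $\partial c=0$), hence a constant function $X\to G$; since $q$ is a chain homotopy equivalence and $[c]\neq 0$, the class $[q(c)]$ is nonzero in $H^0(\C(X;G))=G$, so the constant is nonzero. Therefore the stabilized support $\Supp{q(c)}$ equals $\gD_X$, and the control property $\Supp{q(c)}\ccap\gD\csubset\Supp{c}$ immediately yields $\gD_X\csubset\Supp{c}$.

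For part~\ref{i:hx of subset}, Theorem~\ref{t:pd} applied to the pair $(X,A)$ gives $\Hx[n](A;G)\cong\hx[0](X-A;G)$, and Proposition~\ref{(X-A)=(X/A)} identifies the latter with $\hx[0](X/\overline{A};G)$. The hypothesis $X\not\csubset A$ means precisely that the pseudometric quotient $(X/\overline{A},d_A)$ is unbounded. Part~\ref{i:hx of PD(n)} provides $\hx[0](X;G)=0$ (for $n\geq 1$), from which coarse connectedness of $(X,d)$ follows by a standard argument: given $x,y\in X$ with $x-y=\partial c$ for some $c\in\cx[1](X;G)$, the points $x$ and $y$ must lie in the same component of the support graph of $c$, yielding a coarse path between them. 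Since $d_A\leq d$, this coarse connectedness passes to $(X,d_A)$ and hence to the quotient $X/\overline{A}$. A standard telescoping along coarse rays to infinity in the unbounded, coarsely connected pseudometric space $X/\overline{A}$ then yields $\hx[0](X/\overline{A};G)=0$, completing the proof.

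The step most likely to require care is this last telescoping: I need to manufacture, from a given locally finite $0$-chain on $X/\overline{A}$, a locally finite $1$-chain whose boundary recovers it, by routing coarse rays to infinity so that each bounded ball in the quotient is met by only finitely many $1$-simplices. The unboundedness of $X/\overline{A}$ (from $X\not\csubset A$) and the inherited coarse connectedness are exactly what enable the routing, but the bookkeeping to maintain local finiteness is the delicate point.
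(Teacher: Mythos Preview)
Your treatment of part~\eqref{i:hx of PD(n)} matches the paper's proof exactly, including the use of the control property of $q$ to pin down the support of a nonzero $n$-cycle.

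For part~\eqref{i:hx of subset} the paper takes a far shorter route and avoids the telescoping issue entirely. It argues on the cohomological side: a $0$-cocycle in $\C[0](X;G)$ is a constant function, and if nonzero its support is all of $\gD_X$; since $X\not\csubset A$, no nonzero constant lies in $\Cx[0](X-A;G)$, so $\Hx[0](X-A;G)=0$, and Alexander duality finishes. (As written, the paper's last line pairs $\Hx[0](X-A)$ with $\Hx[n](A)$; strictly, Theorem~\ref{t:pd} identifies $\Hx[0](X-A;G)$ with $\hx[n](A;G)$, so the one-line cocycle argument literally establishes the homological vanishing---which is also what is used downstream in the Jordan separation corollary.)

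Your route through $\hx[0](X-A;G)$ is the one that matches the stated $\Hx[n](A;G)=0$ under Theorem~\ref{t:pd}, but it has a real gap. The deduction that $\hx[0](X;G)=0$ forces coarse connectedness of $(X,d)$ does not work as you describe: from $x-y=\partial c$ with $c\in\cx[1](X;G)$ you cannot conclude that $x$ and $y$ lie in the same component of the support graph of $c$, because if the component containing $x$ is unbounded it can by itself carry a locally finite $1$-chain with boundary exactly $x$. Even granting coarse connectedness and unboundedness of $X/\overline{A}$, the telescoping step is not ``standard'': producing a $1$-chain in $\cx[1]$ requires simplices of \emph{uniformly} bounded diameter, and routing infinitely many rays through the space while maintaining local finiteness at a fixed scale needs control you have not supplied. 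The paper sidesteps both issues by working with $0$-cocycles rather than $0$-cycles.
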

\begin{proof}
    \eqref{i:hx of PD(n)} Taking $A=\emptyset$ in Theorem~\ref{t:pd}, we have
    \[
    \Hx(X; G)=\Hx(X-\emptyset; G)=\hx[n-*](\emptyset; G)=
    \begin{cases}
        G & *=n,\\
        0 & \text{otherwise.}
    \end{cases}
    \]
    A similar argument computes homology.

    For the second part, consider the map $q:\cx[n](X)\to \C[0](X)$ from the definition of a coarse PD($n$) space.
    If an $n$-cycle $c$ is nonzero in $\hx[n](X; G)$, then its dual $0$-cocycle $\phi=q\otimes G(c)$ is a nonzero constant function $\phi: X \to G$, in particular $\Supp{\phi}=\gD$.
    By definitions $\Supp{\phi} \ccap \gD\csubset \Supp{c}$ and $\Supp{c} \csubset \gD $, thus $\Supp{c} \ceq \gD $.

    \eqref{i:hx of subset} Since $0$-cocycles on $X$ are constant functions, $X\not \csubset A$ implies that the only $0$-cocycle in $\Cx[0](X-A; G)$ is $0$.
    Hence, $\Hx[n](A; G)=\Hx[0](X-A; G) =0$.
\end{proof}

Combining Proposition~\ref{p:Hx and separation} and the coarse Alexander duality shows that deep separations of a coarse PD($n$) space are determined by the $(n-1)^{\text{st}}$ coarse homology of the separating subspace:
\begin{Corollary}\label{c:ds}
    Let $X$ be a coarse PD($n$) space and $A\subset X$.
    Then
    \[
    \cD\cS_A=\hx[n-1](A; \zz/2).
    \]
\end{Corollary}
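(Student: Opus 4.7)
The plan is essentially a one-line assembly of two results already in hand. First, I would recall Proposition~\ref{p:Hx and separation}, which produces an isomorphism $F: \cD\cS_A \xrightarrow{\cong} \Hx[1](X-A; \zz/2)$, identifying deep-mod-shallow separation classes of $X$ with respect to $A$ with the first coarse cohomology of the coarse complement.

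Next, I would invoke the coarse Alexander duality (Theorem~\ref{t:pd}) with coefficients $G = \zz/2$ (which is finitely generated), $k = 1$, and the given subset $A \subset X$. This yields
\[
\Hx[1](X-A; \zz/2) \cong \hx[n-1](A; \zz/2).
\]
Composing the two isomorphisms gives $\cD\cS_A \cong \hx[n-1](A; \zz/2)$, as desired.

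Since both ingredients are already proved in the excerpt, there is no real obstacle; the only thing to verify is that the hypotheses line up, namely that $\zz/2$ is an allowed coefficient group in Theorem~\ref{t:pd} (it is finitely generated, so yes) and that Proposition~\ref{p:Hx and separation} was stated without extra geometric assumptions on $X$ (it was). The proof is therefore a direct concatenation, and I would write it as a two-line argument with no additional content.
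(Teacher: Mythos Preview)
Your proposal is correct and matches the paper's own argument exactly: the corollary is stated immediately after the sentence ``Combining Proposition~\ref{p:Hx and separation} and the coarse Alexander duality\ldots'' and is proved by precisely the two-step composition you describe. There is nothing to add.
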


As an application, we prove the following coarse version of the Jordan separation theorem, which combined with Theorem~\ref{t:geo} in the geodesic case, essentially recovers the coarse Jordan separation theorem of Kapovich and Kleiner \cite{kk05}*{Corollary 7.8}, see also~\cite{p07}*{Proposition 1}.
\begin{Corollary}[Coarse Jordan separation] Let $f:Y \to X$ be a coarse embedding, where $X$ is a coarse PD($n$) space.
    Then:
    \begin{enumerate}
        \item If $Y$ is a coarse PD($n-1$) space, then modulo shallow separations, there is a unique deep coarse separation of $X$ with respect to $f(Y)$.

        \noindent Moreover, if $C$ is a deep coarse complementary component of $X$ with respect to $f(Y)$, then $f(Y)\csubset C$.

        \item If $Y\subset Z$, where $Z$ is a coarse PD($n-1$) space, and $Z$ is not coarsely contained in $Y$, then $f(Y)$ does not coarsely separate $X$.
    \end{enumerate}
\end{Corollary}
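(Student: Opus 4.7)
My plan is to reduce both parts to Corollary~\ref{c:ds}, which identifies $\cD\cS_{f(Y)}$ with $\hx[n-1](f(Y); \zz/2)$. Since $f$ is a coarse embedding, it restricts to a coarse equivalence $Y \to f(Y)$ and hence $\hx[*](Y; G) \cong \hx[*](f(Y); G)$, reducing the problem to computing $\hx[n-1](Y; \zz/2)$ under each hypothesis.

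For part~(1), the coarse PD($n-1$) hypothesis on $Y$ together with Corollary~\ref{c:HX of PD(n)}\eqref{i:hx of PD(n)} yields $\hx[n-1](Y; \zz/2) \cong \zz/2$, and Corollary~\ref{c:ds} gives the uniqueness of the deep coarse separation modulo shallow ones. For the moreover clause, let $C$ be a deep coarse complementary component of $X$ with respect to $f(Y)$. By Lemma~\ref{l:deepsep} we may replace $C$ by a subset $C' \subset C$ so that $\{C', X - C'\}$ is a deep separation, and it suffices to prove $f(Y) \csubset C'$. The cocycle $d(1_{C'}) \in \Cx[1](X-f(Y); \zz/2)$ represents the nontrivial element of $\Hx[1](X - f(Y); \zz/2) \cong \zz/2$ via the isomorphism of Proposition~\ref{p:Hx and separation}. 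Applying the coarse Poincaré duality map $p$, which by Lemma~\ref{l:res}\eqref{i:p} restricts to a map $\Cx[1](X - f(Y)) \to \cx[n-1](f(Y) \subset X)$, produces a cycle $p(d(1_{C'}))$ representing the generator of $\hx[n-1](f(Y); \zz/2)$ under the coarse Alexander duality of Theorem~\ref{t:pd}. By the second half of Corollary~\ref{c:HX of PD(n)}\eqref{i:hx of PD(n)} applied to the coarse PD($n-1$) space $f(Y)$, the support of this cycle coarsely contains $\gD_{f(Y)}$, while the control property of $p$ gives $\Supp{p(d(1_{C'}))} \csubset \Supp{d(1_{C'})}$. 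Combining, $\gD_{f(Y)} \csubset \Supp{d(1_{C'})}$, and since $\Supp{d(1_{C'})}$ consists of stabilized simplices with one vertex in $C'$ and the other in $X - C'$, each $y \in f(Y)$ lies within a uniform distance of $C'$, giving $f(Y) \csubset C' \csubset C$.

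For part~(2), Theorem~\ref{t:pd} applied to the coarse PD($n-1$) space $Z$ with $A = Y$ gives $\hx[n-1](Y; \zz/2) \cong \Hx[0](Z - Y; \zz/2)$. Any $0$-cocycle in $\Cx[0](Z - Y; \zz/2)$ is a constant function on $Z$ whose support is coarsely contained in $Y$; the hypothesis $Z \not\csubset Y$ forces this constant to be $0$, so $\Hx[0](Z - Y; \zz/2) = 0$. Therefore $\cD\cS_{f(Y)} \cong \hx[n-1](Y; \zz/2) = 0$ by Corollary~\ref{c:ds}, and $f(Y)$ does not coarsely separate $X$. The main obstacle I expect is the moreover clause of part~(1): one has to identify $[p(d(1_{C'}))]$ as the coarse fundamental class of $f(Y)$ (so that its support property from Corollary~\ref{c:HX of PD(n)} applies) and then carefully translate the resulting support containment $\gD_{f(Y)} \csubset \Supp{d(1_{C'})}$ in $X^\infty$ back into a metric neighborhood relation on $X$.
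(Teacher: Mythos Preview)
Your proof is correct and essentially identical to the paper's. The obstacle you flag is handled exactly as you suspect: the paper invokes Lemma~\ref{l:equiv1} to replace $p(d(1_{C'}))\in\cx[n-1](f(Y)\subset X;\zz/2)$ by a homologous cycle $c\in\cx[n-1](f(Y);\zz/2)$ with $\Supp{c}\ceq\Supp{p(d(1_{C'}))}$, so that Corollary~\ref{c:HX of PD(n)}\eqref{i:hx of PD(n)} applies to $c$; the paper's endgame is then slightly more direct than yours, observing (as in the proof of Lemma~\ref{l:res}\eqref{i:p}) that $\Supp{d(1_{C'})}\ccap\gD_X\csubset\gD_{C'}$ forces $\Supp{p(d(1_{C'}))}\csubset\gD_{C'}$, so $f(Y)\csubset\Supp{c}\csubset C'$ without having to translate $\gD_{f(Y)}\csubset\Supp{d(1_{C'})}$ back into a metric statement.
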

\begin{proof}
    Since $f$ is a coarse equivalence between $Y$ and $f(Y)$, the separation claims follow immediately from Corollaries~\ref{c:HX of PD(n)} and \ref{c:ds}.

    To prove the coarse containment claim, note that the claim for a subset of $C$ implies the claim for $C$, so by Lemma~\ref{l:deepsep} it is enough to consider the case where both $C$ and $X-C$ are deep.

    In this case $[d(1_C)] \neq 0$ in $\Hx[1](X-f(Y); \zz/2)$ and hence its dual $[p(d(1_C))] \neq 0$ in $\hx[n-1](f(Y) \subset X; \zz/2)$.
    By construction of Lemma~\ref{l:equiv1} the cycle $p(d(1_C))$ is homologous to a cycle $c$ in $\cx[n-1](f(Y); \zz/2)$ with $\Supp{c} \ceq \Supp{p(d(1_C))}$.
    Since $\Supp{d(1_C)} \ccap \gD_{X }\csubset \gD_{C}$, arguing like in Lemma~\ref{l:res}\eqref{i:p} shows that $\Supp{p(d(1_C))}\csubset C$.
    By Corollary~\ref{c:HX of PD(n)} we have $f(Y)\csubset \Supp{c}$ and we conclude $f(Y)\csubset C$.
\end{proof}

\section{Products}\label{s:products}

Our next goal is to obtain a characterization of coarse PD($n$) spaces in terms of existence of a certain pair of (co)homology classes (Theorem~\ref{t:op}).
Our proof mostly follows the proof of a similar characterization of the usual homology manifolds by McCrory~\cite{m77}, we just need to keep careful track of supports.
In this section we gather the required information about various products.
Here we closely follow~\cite{s81a}*{Chapter 6}.
Our main observation is that certain slant and cap product maps between $\cx(X)$ and $\C(X)$ satisfy the support conditions of the maps $p$ and $q$ in the definition of a coarse PD($n$) space (Lemma~\ref{l:maps}).

To define product maps between (co)chain complexes, it is convenient to work with $\cf(X)\otimes \cf(Y)$, rather than with $\cf(X\times Y)$.
On the other hand, certain homotopies are easier to describe in $\cf(X\times Y)$.
So we will need to transfer these homotopies in $\cf(X\times Y)$ to homotopies in $\cf(X)\otimes \cf(Y)$.
To have control over the support of the homotopies we need to fix a particular choice of chain homotopy equivalence between $\cf(X\times Y)$ and $\cf(X) \otimes \cf(Y)$.

To simplify notation, denote $\cf(X) \otimes \cf(Y)$ by $\cf(X \otimes Y)$ and $\Hom(\cf(X) \otimes \cf(Y), \zz)$ by $\C(X \otimes Y)$.

For a simplex $\gs =(x_0, \dots, x_i, \dots, x_k)$ let ${}_i\gs=(x_0, \dots, x_i)$ and $\gs_{k-i}=(x_i, \dots, x_k)$ denote its front and back faces respectively.
The Alexander--Whitney map is given by:
\begin{align*}
    A: \cf(X \times Y) \to \cf(X) \otimes \cf(Y) \\
    \gr^{k} \mapsto \sum_{i} {}_i p_{X}(\gr) \otimes p_{Y}(\gr)_{k-i}.
\end{align*}
Here $p_{X}$ and $p_{Y}$ denote the projections to $X$ and $Y$, respectively: if $\gr^{k} = (x_{0}, y_{0} , \dots, x_{k}, y_{k} )$, $p_{X}(\gr):=(x_0, \dots, x_k) $ and $p_{Y}(\gr):=(y_0, \dots, y_k)$.

To get a map in the other direction, take two simplices $\sigma=(x_0, \dots , x_k)$ and $\tau=(y_0, \dots , y_l)$.
Arrange pairs $(x_{i}, y_{j})$ as the vertices of an $k\times l$ rectangular grid in $\rr^2$.
To a path $\gamma$ in this grid starting at $(x_{0}, y_{0})$ and ending at $(x_{k}, y_{l})$, always moving either north or east, associate a simplex $\rho_{\gamma}$ in $X\times Y$, obtained by reading the labels along the path.
Define the simplicial cross product
\begin{align*}
    S :\cf(X \otimes Y) &\to \cf(X\times Y) \\
    \gs^{k} \otimes \gt^{l} &\mapsto \sum_{\gamma}(-1)^{n(\gamma)}\, \rho_\gamma,
\end{align*}
where $n(\gamma)$ is the number of squares in the grid lying below the path $\gamma$.
Define the support $\Supp{\gs \otimes \gt}$ of $\gs\otimes \gt$ to be the element of $ (X \times Y)^{(k+1)(l+1)}$ whose $(i, j)$ coordinate is $(x_i, y_j)$.
Let $\clf(X\otimes Y)$ consists of all integral chains of the form $c=\sum a_{\gs, \tau} \gs\otimes \tau $ with the following support condition: there are only finitely many products of simplices that have a vertex in any given bounded set in $X\times Y$.
We will be interested in the following subcomplex of $\clf(X\otimes Y)$:
\[
\cx(X\otimes Y):=\{c\in \clf(X\otimes Y)\mid \Supp{c}\csubset \gD_{X\times Y}\}.
\]
\begin{Lemma}\label{l:AW}
    The maps $A$ and $S$ are coarsely support preserving chain homotopy equivalences between $\cx(X \times Y)$ and $\cx(X \otimes Y)$.
\end{Lemma}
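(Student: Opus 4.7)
The plan is to verify three things in turn: that $A$ and $S$ are chain maps (classical), that they respect the coarse support conditions defining $\cx(X\times Y)$ and $\cx(X\otimes Y)$, and that the compositions $A\circ S$ and $S\circ A$ are chain homotopic to the identity via homotopies with the same kind of support control.

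First I would check the coarse support preservation. Suppose $\gr=((x_0,y_0),\dots,(x_k,y_k))$ is a $k$-simplex in $X\times Y$ whose vertices lie within $R$ of each other in the sup metric. Then each $x_i$ is within $R$ of every $x_j$, and similarly for the $y$'s. Each summand $_{i}p_X(\gr)\otimes p_Y(\gr)_{k-i}$ of $A(\gr)$ has support the $(i+1)\times(k-i+1)$ grid of pairs $(x_a,y_b)$ with $a\le i\le b$, and all these pairs lie within $R$ of $\gD_{X\times Y}$. Symmetrically, if $\gs\otimes\gt$ has its supporting grid $\{(x_i,y_j)\}$ within $R$ of $\gD_{X\times Y}$, then every path simplex $\gr_\gamma$ produced by $S$ has vertices within $R$ of each other as a sequence of points in $X\times Y$. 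Local finiteness is preserved in both directions, because $A$ applied to a single $\gr$ produces at most $k+1$ summands and $S$ applied to a single $\gs\otimes\gt$ produces finitely many path simplices, each using only vertices already present.

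Next, I would establish the chain homotopy equivalence. A direct check using the front-face/back-face definition shows $A\circ S=\mathrm{id}$ on $\cf(X)\otimes \cf(Y)$, so nothing further is needed in that direction. For the other direction one must produce a chain homotopy $H\colon\cf(X\times Y)\to \cf[*+1](X\times Y)$ with $\partial H+H\partial=S\circ A-\mathrm{id}$. The classical acyclic models argument gives existence abstractly, but not the needed support control. Instead I would use the explicit Eilenberg--Zilber shuffle-based homotopy, which has the crucial feature that every simplex appearing in $H(\gr)$ has all of its vertices drawn from among the vertices of $\gr$. Granted this vertex-support property, if $\gr$ is within $R$ of $\gD_{X\times Y}$ then so is every simplex in $H(\gr)$, giving the coarse support preservation of $H$. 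Local finiteness of $H(c)$ then follows from local finiteness of $c$ by the same finite-branching argument used for $A$ and $S$.

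The main obstacle is precisely this second step: ruling out chain homotopies that introduce new vertices, since the abstract acyclic models argument does not by itself give the required control. My approach would be to write $H$ as an explicit alternating sum over shuffles of faces and partial products applied to $\gr$, and then observe combinatorially that no term involves any point outside the vertex set of $\gr$. Once this vertex-support property is in hand, coarse support preservation of $H$ and local finiteness are automatic, and the lemma follows.
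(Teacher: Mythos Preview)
Your overall strategy is correct and matches the paper's in spirit: both arguments rest on the observation that $A$, $S$, and the chain homotopies between their compositions and the identity use only vertices already present in the input simplex, which immediately gives coarse support preservation and local finiteness. However, your claim that $A\circ S=\mathrm{id}$ on the nose is false in this unnormalized setting. For example, with $\gs=(x_0,x_1)$ and $\gt=(y_0)$ one has $S(\gs\otimes\gt)=((x_0,y_0),(x_1,y_0))$, and applying $A$ yields $(x_0)\otimes(y_0,y_0)+(x_0,x_1)\otimes(y_0)$; the degenerate term $(x_0)\otimes(y_0,y_0)$ does not vanish in $\cf(X)\otimes\cf(Y)$. (The identity $A\circ S=\mathrm{id}$ is a statement about \emph{normalized} chains.) So you do need a homotopy $H$ on the $\cf(X\otimes Y)$ side as well---but your own vertex-preservation argument handles it with no change, so this is a slip rather than a gap.

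The paper takes a slightly different route for producing the homotopies: rather than writing down explicit Eilenberg--Zilber formulas, it invokes acyclic models and simply remarks that ``the acyclic models use the same vertices,'' which forces the required support condition by naturality. It then cites Hair's thesis for explicit formulas as an alternative. Your approach is more hands-on and self-contained but requires actually exhibiting the shuffle homotopy; the paper's is terser but leans on the reader recognizing that naturality of the acyclic-models homotopy with respect to vertex-set inclusions confines it to the given vertices.
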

\begin{proof}
    Note that $\Supp{S(\gs \otimes \gt)} \subset \Supp{\gs \times \gt}$ and $\Supp{A(\gr)} \subset \Supp{ p_{X}(\gr) \times p_{Y}(\gr)}$.
    The acyclic models produce chain homotopies $H: \cf(X \otimes Y) \to \cf[*+1](X \otimes Y) $ and $H': \cf(X \times Y) \to \cf[*+1](X \times Y) $ between their compositions and identities, which satisfy the same support conditions.
    (The point is that the acyclic models use the same vertices.) 
    Since $ \Supp{ p_{X}(\gr) \times p_{Y}(\gr)} \subset[\diam \gr] \Supp{\gr}$, all these maps extend to coarsely support preserving maps between coarse chain complexes.
\end{proof}
\begin{Remark}
    A similar statement, in a slightly different language, is proved by Hair in \cite{h10}, where an explicit formulas for the homotopies are given.
\end{Remark}

We now consider the case $X=Y$.
Let $\gd$ denote the diagonal map from $X$ to $X\times X$, i.e.
$\gd(x)=(x, x)$ for all $x\in X$.

Let $T:X\times X\to X\times X$ be the involution map switching factors.
It induces the chain involutions $T_{*}$ on $\cx(X\times X)$ and $\cx(X\otimes X)$, the latter is given by $\gs^{k} \otimes \gt^{l} \mapsto (-1)^{kl} \gt^{l} \otimes \gs^{k} $.

Define a chain homotopy $D_{*}$ between $T_*$ and $id$ in $\cf(X\times X)$ by
\[
D_{*}(x_0, \dots , x_m)=\sum (-1)^k(x_0, \dots , x_k, T(x_k), \dots , T(x_m)).
\]
Since metric neighborhoods of $\gd(X)$ in $X \times X$ are invariant under $T$, it follows that $D$ extends to coarsely support preserving maps between coarse chain complexes $\cx(\gd(X) \subset X \times X)$.

By restricting the map $S$ of Lemma~\ref{l:AW} to $\cx(\gd(X)\subset X\times X)$, we get that $\cx(\gd(X)\subset X\times X)$ is coarse chain homotopy equivalent to the following complex
\[
\cx(\delta(X)\subset X\otimes X):=\{c\in \clf(X\otimes X)\mid \Supp{c}\csubset \gD_{\gd(X)}\}.
\]

Note that $S$ commutes with $T_{*}$: $ST_{*}=T_{*}S$.
So we can concatenate the homotopies from Lemma~\ref{l:AW} with $D$ to obtain a chain homotopy $D^{T}$ between $T_*$ and $id$ in $\cx(\gd(X) \subset X\otimes X)$:
\[
T_{*} \he[H_{*}T_{*}] AST_{*} = AT_{*}S \he[AD_{*}S] AS \he[H'_{*}] id.
\]
In other words, $D^{T}=H'+ ADS - HT_{*}$ and $T_{*} - id= \d D^{T} + D^{T} \d$.

We summarize these observations in the following lemma.
\begin{Lemma}\label{l:T}
    \leavevmode
    \begin{enumerate}
        \item The homotopy $D^{T}_{*}: \cf(X\otimes X) \to \cf(X\otimes X) $ satisfies
        \[
        \Supp{D^{T}_{*}(c)} \ccap \gD_{\gd(X)}\ceq \Supp{c} \ccap \gD_{\gd(X)}.
        \]
        \item\label{i:C}
        The homotopy $D_{T}^{*}: \C(X\otimes X) \to \C(X\otimes X) $ satisfies
        \[
        \Supp{D_{T}^{*}(\phi)} \ccap \gD_{\gd(X)}\ceq \Supp{\phi} \ccap \gD_{\gd(X)}.
        \]
        \item The map $T_{*}: \cx(\gd(X)\subset X\otimes X) \to \cx(\gd(X)\subset X\otimes X) $ is controlled chain homotopic to the identity.
    \end{enumerate}
\end{Lemma}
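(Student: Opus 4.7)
My plan is to analyze the three-piece formula $D^{T} = H' + ADS - HT_{*}$ and to extract support control from each constituent, using it to verify (1) and (2) and deduce (3) as a corollary.

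By Lemma~\ref{l:AW} the maps $A$, $S$, and the acyclic-model homotopies $H, H'$ all satisfy $\Supp{f(c)} \csubset \Supp{c}$ uniformly, since the model constructions reuse the vertices of the input. The involution $T:X\times X \to X\times X$ is an isometry (in the sup metric) fixing $\gd(X)$ pointwise, so $T_{*}$ preserves every $N_{R}(\gD_{\gd(X)})$; equivalently, for $y=(a,b)$, $d(y, T(y))=d(a,b)$ is bounded linearly by $d(y, \gd(X))$. Each simplex appearing in $D_{*}(x_{0}, \ldots, x_{m})$ has vertex set contained in $\{x_{0}, \ldots, x_{m}, T(x_{0}), \ldots, T(x_{m})\}$, so its diameter is bounded by $\diam(x_{0}, \ldots, x_{m}) + 2\max_{j} d(x_{j}, \gd(X))$, and $D$ is coarsely support preserving on chains supported in a fixed neighborhood of $\gD_{\gd(X)}$. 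Assembling these pieces, for any $c$ and any $R$, $\Supp{D^{T}_{*}(c)} \cap N_{R}(\gD_{\gd(X)})$ lies in a controlled neighborhood of $\Supp{c} \cap N_{R'}(\gD_{\gd(X)})$ for some $R'$ depending on $R$, yielding the containment $\Supp{D^{T}_{*}(c)} \ccap \gD_{\gd(X)} \csubset \Supp{c} \ccap \gD_{\gd(X)}$ in part (1). For the reverse direction I would invoke the chain homotopy identity $T_{*} - id = \partial D^{T}_{*} + D^{T}_{*}\partial$ together with the fact that $T_{*}$ and $\partial$ both coarsely preserve $R$-neighborhoods of $\gD_{\gd(X)}$, forcing the near-diagonal support of $c$ to be already visible in that of $D^{T}_{*}(c)$. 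Part (2) is the cochain version: the dual map $D_{T}^{*}(\phi) := \phi \circ D^{T}_{*}$ inherits the analogous support conditions from those of $D^{T}_{*}$ by the standard correspondence between chain and cochain supports via pre-composition.

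Part (3) is then immediate: if $c\in \cx(\gd(X)\subset X\otimes X)$, then $\Supp{c} \subset N_{r}(\gD_{\gd(X)})$ for some $r$, so by the forward direction of (1), $\Supp{D^{T}_{*}(c)}$ lies in some fixed neighborhood of $\gD_{\gd(X)}$ with $\Supp{D^{T}_{*}(c)} \csubset \Supp{c}$; thus $D^{T}_{*}$ restricts to a map $\cx[*](\gd(X)\subset X\otimes X) \to \cx[*+1](\gd(X)\subset X\otimes X)$, and combined with $T_{*} - id = \partial D^{T}_{*} + D^{T}_{*}\partial$ established in the passage preceding the lemma, this exhibits $D^{T}_{*}$ as the required controlled chain homotopy between $T_{*}$ and the identity. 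The main obstacle is the reverse containment in (1): the forward direction is a routine combination of the constituent support estimates, but the reverse cannot be read off from these alone and must be extracted from the homotopy identity, crucially using that $T_{*}$ acts by an isometry on $\gD_{\gd(X)}$.
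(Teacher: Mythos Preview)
Your forward-direction argument for (1) and (2), and your derivation of (3), match the paper exactly. The lemma carries no separate proof block; the authors write ``We summarize these observations in the following lemma,'' and the preceding paragraphs carry out precisely your plan: decompose $D^{T} = H' + ADS - HT_{*}$, note that $T$ is an isometry fixing $\gd(X)$ so that $D_{*}$ is coarsely support preserving on chains near $\gD_{\gd(X)}$, and invoke Lemma~\ref{l:AW} for the support control of $A$, $S$, $H$, $H'$.

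Regarding the reverse containment you flag as the main obstacle: your proposed fix via the homotopy identity $T_{*} - id = \partial D^{T} + D^{T}\partial$ does not work. That identity only controls $\Supp{\partial D^{T}c + D^{T}\partial c}$, which can be much smaller than $\Supp{c}$ (take $c$ to be $T_{*}$-invariant), and in any case gives no lower bound on $\Supp{D^{T}c}$ itself---nothing prevents cancellation from shrinking $\Supp{D^{T}c}$ arbitrarily. The paper's own discussion establishes only the forward containment $\csubset$, and inspection of the applications (Lemma~\ref{l:maps}\eqref{i:capx} and~\eqref{i:cupx}) shows that only this direction is ever invoked. So the $\ceq$ in the statement is an overstatement; what both you and the paper actually prove is the $\csubset$ direction, and that suffices for everything downstream.
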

The Alexander--Whitney diagonal approximation is the composition:
\[
\cf(X) \xrightarrow{\gd_{*}}\cf(X \times X) \xrightarrow{A} \cf(X \otimes X).
\]

Its tensor product with itself is used for the cap and cup products in $X\otimes X$:
\begin{align*}
    \gd^{A}_{*}: \cf(X\otimes X) &\to \cf(X\otimes X \otimes X \otimes X) \\
    \gs^{k} \otimes \gt^{l} &\mapsto \sum_{i, j} (-1)^{j(k-i)} ( {}_i\gs \otimes {}_{j}\gt )\otimes (\gs_{k-i} \otimes \gt_{l-j}).
\end{align*}

We now recall chain versions of cup, cap, and slant products from~\cite{s81a}.
\subsection{Cup product} 
Suppose $\phi, \psi \in \C(X\otimes X)$.
Then the cross product $\phi\times \psi\in \C(X\otimes X\otimes X\otimes X)$ is defined by
\[
(\phi\times \psi)(\gs\otimes \tau\otimes \gs'\otimes \tau'):=\phi(\gs\otimes \tau)\psi(\gs'\otimes \tau').
\]
Let $\gd^*_A$ be the dual of the map $\gd_*^A$.
The cup product $\phi\cupp \psi \in \C(X\otimes X)$ is defined to be
\[
\gd_A^*(\phi\times \psi).
\]

\subsection{ Cap product} Suppose $\psi\in \C[n](X)$ and $\gs^k\in \cf[k](X)$ is a $k$-simplex.
The cap product is defined by
\[
\psi \capp \gs=\psi(\gs_{n})(_{k-n}\gs).
\]
Suppose, $\phi \in \C[n](X\otimes X)$ and $\gs^k\otimes \tau^l\in \cf(X\otimes X)$.
Then the cap product is defined by
\[
\phi \capp (\gs^{k} \otimes \gt^{l})=\sum_{i+j=n} (-1)^{i(l-j)} ( \phi(\gs_{i} \otimes \gt_{j})) ({}_{k-i}\gs \otimes {}_{l-j}\gt ).
\]

\subsection{ Slant product} Suppose $\phi\in \C[n](X\otimes X)$ and $\gs^k\in \cf[k](X)$ is a $k$-simplex.
The slant product $\phi/\gs\in \C[n-k](X)$ is defined by
\[
(\phi/\gs^k)(\gt^{n-k}) = \phi(\gs \otimes \gt).
\]

We first observe the following.
\begin{Lemma}\label{l:diagapp}
    Suppose $\phi\in \C(X\otimes X\otimes X\otimes X)$ and $\delta:X\times X\to X\times X\times X\times X$ is the diagonal map.
    Then $\Supp{ \gd_{A}^{*} (\phi)} \ccap \gD_{X\times X} \csubset \gd^{-1} (\Supp{\phi}\ccap \gD_{\gd(X\times X)})$.
\end{Lemma}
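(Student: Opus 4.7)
The plan is a direct unpacking of definitions combined with one elementary diameter estimate. No real surprises are expected: the lemma says essentially that the Alexander--Whitney coproduct is ``coarsely supported on the diagonal'', reflecting the fact that each of the four tensor factors produced by $\gd^A$ uses vertices drawn from the same two simplices.

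First I would take $\gs \otimes \gt \in \Supp{\gd^*_A \phi}$ whose stabilized support lies within $r$ of $\gD_{X \times X}$; writing $\gs = (x_0, \dots, x_k)$ and $\gt = (y_0, \dots, y_l)$, this means each grid vertex $(x_i, y_j)$ lies within $r$ of some common $(x, y) \in X \times X$ in the sup metric, and in particular $\diam \{x_i\}, \diam \{y_j\} < 2r$. The nonvanishing of $\gd^*_A(\phi)(\gs \otimes \gt)$ together with the explicit formula for $\gd^A_*$ then produces indices $i, j$ for which the $4$-fold tensor $({}_i\gs \otimes {}_j\gt) \otimes (\gs_{k-i} \otimes \gt_{l-j})$ lies in $\Supp{\phi}$.

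Second, the diameter bound forces every vertex of this $4$-tensor, which has the form $(x_a, y_b, x_c, y_d)$, to lie within $2r$ of $(x, y, x, y) \in \gd(X \times X)$. So the $4$-tensor sits in $\Supp{\phi} \cap N_{2r}(\gD_{\gd(X \times X)})$, a representative of $\Supp{\phi} \ccap \gD_{\gd(X \times X)}$. Since $\gd$ is applied coordinate-wise to stabilized supports, $\gd(\Supp{\gs \otimes \gt})$ is the grid of points $(x_i, y_j, x_i, y_j)$, each within $2r$ of the corresponding vertex of the selected $4$-tensor in $(X^4)^\infty$. Hence $\gd(\Supp{\gs \otimes \gt}) \subset N_{4r}(\Supp{\phi}) \cap N_{2r}(\gD_{\gd(X \times X)})$, i.e.\ $\gs \otimes \gt \in \gd^{-1}\bigl(N_{4r}(\Supp{\phi}) \cap N_{4r}(\gD_{\gd(X \times X)})\bigr)$, giving the asserted coarse containment with constants depending only on $r$.

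The step I expect to be most delicate, though not really an obstacle, is keeping the notion of ``support'' consistent across complexes: tensor-simplex supports naturally live in products indexed by grids of varying shape, and $\gd$ must be interpreted as extended coordinate-wise to stabilized sequences in $(X \times X)^\infty$. Once the convention of padding grids by their last vertex and comparing in the sup metric is fixed, everything collapses to the single diameter estimate above, and no input beyond the definitions and Lemma~\ref{l:nbds} is required.
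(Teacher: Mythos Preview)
Your proposal is correct and follows essentially the same approach as the paper: pick a tensor $\gs\otimes\gt$ in the support at scale $r$, extract from the Alexander--Whitney formula a $4$-fold tensor lying in $\Supp{\phi}$, and then use the bound $\diam\{x_i\},\diam\{y_j\}<2r$ to place both $\gd(\Supp{\gs\otimes\gt})$ and that $4$-tensor near $\gD_{\gd(X\times X)}$ and near each other. The only differences are cosmetic: the paper records the constant $r$ throughout while you carry $2r$ and $4r$ (yours are the safe choice; the paper's $r$ for the $\Supp{\phi}$ containment is arguably optimistic), and your phrase ``corresponding vertex'' is slightly loose since the two grids have different shapes, but the intended sup-metric estimate is clear and correct.
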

\begin{proof}

    Suppose $\Supp{\gs^k\otimes \tau^l}\in \Supp{\gd_A^*(\phi)}\cap N_r(\gD_{X\times X})$.
    It is enough to show that $\Supp{\gs^k\otimes \gt^l}\in \gd^{-1}(N_r(\Supp{\phi })\cap N_r(\gD_{\gd(X\times X)}))$.

    The assumption $\Supp{\gs^k\otimes \gt^l}\in \Supp{\gd_A^*(\phi)}$ implies that
    \[
    \Supp{{}_i\gs \otimes {}_{j}\gt \otimes \gs_{k-i} \otimes \gt_{l-j}}\in \Supp{\phi}
    \]
    for some $i, j$.

    The assumption $\Supp{\gs^k\otimes \gt^l}\in N_r(\gD_{X\times X})$ implies that
    \[
    \Supp{\gs^k\otimes \gt^l}\in \delta^{-1}(N_r(\gD_{\delta(X\times X)})),
    \]
    and also that
    \[
    \delta(\Supp{\gs^k \otimes\tau^l})\subset N_r(\Supp{{}_i\gs \otimes {}_{j}\gt \otimes \gs_{k-i}\otimes \gt_{l-j} }),
    \]
    and hence
    \[
    \Supp{\gs^k\otimes \gt^l}\in \gd^{-1}(N_r(\Supp{\phi})).
    \]
    Combining these we obtain
    \[
    \Supp{\gs^k\otimes \gt^l}\in \gd^{-1}(N_r(\Supp{\phi }))\cap \delta^{-1}(N_r(\gD_{\gd(X\times X)}))=\gd^{-1}(N_r(\Supp{\phi })\cap N_r(\gD_{\gd(X\times X)})).
    \]
    This finishes the proof.
\end{proof}

To state our next lemma, we need the following cochain complex
\[
\Cx(X\otimes X-\gd(X)):=\{\phi\in \C(X\otimes X)\mid \Supp{\phi}\ccap \gD_{X\times X} \csubset \gD_{\gd(X)}\}.
\]
Note that this complex is chain homotopy equivalent to $\Cx(X\times X-\gd(X))$ via coarsely support preserving homotopies.

Let $p_{1*}, p_{2*}:\cf(X\otimes X)\to \cf(X)$ be the projection maps: $p_{1*}(\gs\otimes \tau)=\gs$ and $p_{2*}(\gs\otimes \tau)=\tau$.
They induce maps $p^{*}_{1}, p^{*}_{2}: \C(X) \to \C(X\otimes X)$.
For the rest of the paper, we set $\ge'_{n}=(-1)^{n(n+1)/2}$ and $\ge_{n}=(-1)^{n}$.
\begin{Lemma}\label{l:maps}
    Let $\phi \in \Cx[n](X\otimes X - \gd(X))$ be a cocycle and let $c\in \cx[n](X)$ be a cycle.
    Then:
    \begin{enumerate}

        \item\label{i:slant}
        The slant product map, $\ge'_{n-k-1} \phi/ \cdot : \cx[k](X) \to \C[n-k](X)$ is a chain map and $\Supp{\phi/a} \ccap \gD_X \csubset \Supp{a}$ for all $a\in \cx(X)$.

        \item\label{i:cap}
        The cap product map $ \ge'_{n-k} \cdot \capp c :\C[k](X) \to \cx[n-k](X)$ is a chain map and $\Supp{\psi \capp c} \csubset \Supp{\psi}$ for all $\psi \in \C[k](X)$.

        \item\label{i:capx}
        The chain maps $\cx(X) \to \cx(\gd(X) \subset X \otimes X)$ given by $a \mapsto \ge_{nk} \phi \capp (c \otimes a )$ and $a \mapsto T^{*}\phi \capp (a \otimes c)$ are controlled chain homotopic: the chain homotopy $D$ satisfies $\Supp{D(a)} \csubset \gd(\Supp{a} )$.

        \item\label{i:cupx}

        The chain maps $\C(X) \to \Cx(X \otimes X - \gd(X))$ given by $\psi \mapsto \ge_{nk} \phi \cupp p_{2}^{*} ( \psi ) $ and $\psi \mapsto p_{1}^{*} ( \psi ) \cupp \phi $ are controlled chain homotopic: the chain homotopy $D$ satisfies $\Supp{D(\psi)} \ccap \gD_{X\times X} \csubset \gd(\Supp{\psi} ) \ccap \gD_{\gd(X)}$.
    \end{enumerate}
\end{Lemma}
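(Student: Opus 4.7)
My plan is to handle the four assertions in order. For \eqref{i:slant} and \eqref{i:cap}, I apply the classical (co)boundary identities for the slant and cap products, together with $d\phi=0$ and $\partial c=0$, and track supports using the defining hypothesis $\Supp{\phi}\ccap \gD_{X\times X}\csubset \gD_{\gd(X)}$ and the fact that every simplex in $c$ or in any $a\in \cx(X)$ is uniformly close to $\gD_X$. For \eqref{i:capx} and \eqref{i:cupx}, I use the controlled homotopies $D^T$ and $D_T^*$ from Lemma~\ref{l:T} after identifying the two maps in each case as differing by the involution $T$ up to an explicit sign.

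For \eqref{i:slant}, the identity $d(\phi/a)=\pm(d\phi)/a\pm\phi/\partial a$ and $d\phi=0$ yield the chain-map property after inserting the sign $\ge'_{n-k-1}$. For the support bound, suppose $\tau$ lies in $\Supp{\phi/a}$ and close to $\gD_X$; then $\gs\otimes \tau\in \Supp{\phi}$ for some $\gs$ with $\Supp{\gs}\subset \Supp{a}$. Since $a\in \cx(X)$, $\gs$ itself is close to $\gD_X$, so $\gs\otimes\tau$ is close to $\gD_{X\times X}$; the hypothesis on $\phi$ then forces $\gs\otimes\tau$ close to $\gD_{\gd(X)}$, which in the sup metric means $\gs$ and $\tau$ are vertex-wise close, giving $\Supp{\tau}\csubset \Supp{a}$. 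Part \eqref{i:cap} is analogous: the boundary identity $\partial(\psi\capp c)=\pm(d\psi)\capp c\pm \psi\capp \partial c$ and $\partial c=0$ give a chain map (with sign $\ge'_{n-k}$), and since each $\gs\in \Supp{c}$ has uniformly bounded diameter, the front face ${}_{n-k}\gs$ is uniformly close in $X^\infty$ to the back face $\gs_k\in \Supp{\psi}$, whence $\Supp{\psi\capp c}\csubset \Supp{\psi}$.

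For \eqref{i:capx}, a direct calculation from the definitions of the cap product and $T$ gives
\[
T^*\phi\capp(a\otimes c)=\ge_{nk}\,T_*\bigl(\phi\capp(c\otimes a)\bigr),
\]
so the difference of the two maps is $\ge_{nk}(id-T_*)$ applied to $\phi\capp(c\otimes\cdot)$. I set $D(a):=-\ge_{nk}D^T(\phi\capp(c\otimes a))$ using the controlled homotopy $D^T$ of Lemma~\ref{l:T}(3). The support bound reduces to two steps: an argument parallel to \eqref{i:slant} shows $\Supp{\phi\capp(c\otimes a)}\csubset \gd(\Supp{a})$ (because $c\otimes a$ is close to $\gD_{X\times X}$ and $\phi$ pulls supports toward $\gD_{\gd(X)}$), and Lemma~\ref{l:T}(1) propagates this through $D^T$. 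Part \eqref{i:cupx} is formally dual: using Lemma~\ref{l:diagapp} and the Alexander--Whitney diagonal approximation, the two cup products $\ge_{nk}\phi\cupp p_2^*\psi$ and $p_1^*\psi\cupp\phi$ differ by $T^*$ up to controlled coboundary, and $D_T^*$ from Lemma~\ref{l:T}\eqref{i:C} supplies the cochain homotopy with the stated support behaviour.

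The main obstacle is the bookkeeping: matching the signs $\ge_{nk}$, $\ge'_{n-k-1}$, and $\ge'_{n-k}$ to the classical (co)boundary identities, and tracing the explicit formula $D^T=H'+ADS-HT_*$ of Lemma~\ref{l:T} carefully enough to verify that the stated coarse containments (not merely the intersections with $\gD_{\gd(X)}$) are actually preserved. Geometrically, however, the picture is uniform throughout: the hypothesis on $\phi$ pulls all relevant supports toward $\gd(X)$, and the homotopies of Lemma~\ref{l:T} encode the diagonal symmetry that makes the two pairs of maps in \eqref{i:capx} and \eqref{i:cupx} agree up to controlled homotopy.
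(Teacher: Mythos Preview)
Your treatment of \eqref{i:slant}, \eqref{i:cap}, and \eqref{i:capx} is correct and matches the paper's argument essentially line for line: the sign checks, the support estimates via the hypothesis $\Supp{\phi}\ccap\gD_{X\times X}\csubset\gD_{\gd(X)}$, the exact identity $T^{*}\phi\capp(a\otimes c)=\ge_{nk}\,T_{*}\bigl(\phi\capp(c\otimes a)\bigr)$, and the use of $D^{T}$ from Lemma~\ref{l:T} are all the same as in the paper.

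For \eqref{i:cupx}, however, your sketch has a genuine gap. The two cup products do \emph{not} differ by a single application of $T^{*}$; two distinct involutions intervene. One has the exact identity
\[
p_{1}^{*}(\psi)\cupp\phi=\gd_{A}^{*}\bigl(p_{1}^{*}(\psi)\times\phi\bigr)=\gd_{A}^{*}T_{X\times X}^{*}\bigl(\ge_{nk}\,\phi\times T^{*}p_{2}^{*}(\psi)\bigr),
\]
where $T_{X\times X}$ swaps the two $(X\times X)$-factors in $(X\times X)^{2}$ (this is the graded commutativity of the cross product), while $T$ swaps the two $X$-factors inside a single $X\times X$ (this converts $p_{1}^{*}$ into $p_{2}^{*}$). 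To reach $\ge_{nk}\,\phi\cupp p_{2}^{*}(\psi)=\ge_{nk}\,\gd_{A}^{*}(\phi\times p_{2}^{*}\psi)$ you must homotope \emph{both} $T^{*}$ and $T_{X\times X}^{*}$ to the identity, so the chain homotopy is a concatenation of two pieces,
\[
D(\psi)=\gd_{A}^{*}T_{X\times X}^{*}\bigl(\ge_{k-1}\,\phi\times D_{T}^{*}p_{2}^{*}(\psi)\bigr)+\gd_{A}^{*}D_{T_{X\times X}}^{*}\bigl(\ge_{nk}\,\phi\times p_{2}^{*}(\psi)\bigr),
\]
and the support estimate must be checked for each term separately via Lemma~\ref{l:diagapp} and Lemma~\ref{l:T}\eqref{i:C}. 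Your proposal invokes only $D_{T}^{*}$; the homotopy $D_{T_{X\times X}}^{*}$ (graded commutativity of the cup product) is missing. This is why \eqref{i:cupx} is not simply ``formally dual'' to \eqref{i:capx}: in \eqref{i:capx} an exact identity absorbs one of the two swaps, but in \eqref{i:cupx} no such collapse occurs.
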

\begin{proof}

    \eqref{i:slant} Since $a\in \cx(X)$ and $\phi \in \Cx[n](X\otimes X - \gd(X))$, it follows that $(\phi/a)(\gs)$ is well-defined and $\Supp{\phi/a} \ccap \gD_X \csubset \Supp{a}$.
    The choice of signs together with $d\phi=0$ shows that $\ge'_{n-k-1} \phi/\cdot$ is indeed a chain map:
    \[
    d(\ge'_{n-k-1} \phi/ a) = (-1)^{n-k} \ge'_{n-k-1} \phi/ \d a = \ge'_{n-k} \phi/ \d a.
    \]

    \eqref{i:cap} Since $c \in \cx[n](X)$, $ \psi \capp c$ is a well-defined element of $\cx[n-k](X)$ and $\Supp{\psi \capp c} \csubset \Supp{\psi}$.
    Again the choice of signs together with $\d c=0$ shows that $\ge'_{n-k} \cdot \capp c $ is a chain map:
    \[
    \d (\ge'_{n-k} \psi \capp c ) = (-1)^{n-k} \ge'_{n-k} d \psi \capp c = \ge'_{n-k-1} d \psi \capp c.
    \]

    \eqref{i:capx} It follows as before that both maps are well-defined chain maps.
    For $a \in \cx[k](X)$ we have
    \[
        T^*\phi \capp (a \otimes c) = \ge_{nk} T^{*}\phi\capp T_{*}(c \otimes a) =\ge_{nk} T_*(T^{*2}\phi\capp (c \otimes a)) = \ge_{nk} T_*(\phi\capp (c \otimes a)).
    \]    
   
    By assumptions on $c$ and $a$ we have $\Supp{c} \times \Supp{a} \csubset \gD_{X\times X}$ and $\Supp{c} \times \Supp{a} \ccap \gD_{\gd(X)}\csubset \gd(\Supp{a})$.
    Therefore, by assumptions on $\phi$,
    \[
    \Supp{\phi\capp (c \otimes a)} \csubset \Supp{\phi} \ccap (\Supp{c} \times \Supp{a}) \csubset \Supp{\phi} \ccap \gD_{X\times X} \ccap (\Supp{c} \times \Supp{a}) \csubset \gd(\Supp{a}).
    \]
    The claim follows, since the homotopy between $T_{*}$ and $id$ coarsely preserves supports by Lemma~\ref{l:T}.

    \eqref{i:cupx} Suppose $\psi \in \C(X)$.
    Then $p_i^*(\psi)\in \C(X\otimes X)$.
    We have
    \begin{align*}
        p_{1}^{*} ( \psi ) \cupp \phi = \gd_{A}^{*}(p_{1}^{*} ( \psi ) \times \phi ) &= \gd_{A}^{*}T^{*}_{X\times X}( \ge_{nk}\phi \times p_{1}^{*} ( \psi ) )\\
        &=\gd_{A}^{*}T^{*}_{X\times X}( \ge_{nk}\phi \times T^{*}p_{2}^{*} ( \psi ) ).
    \end{align*}

    Since $ \cdot \otimes 1$ and $ \ge_{nk} \phi \times \cdot $ are chain maps, the concatenation of the homotopies of $T^{*}$ and $T^{*}_{X\times X} $ to identities gives a chain homotopy between the maps in the claim:
    \[
    D(\psi):= \gd_{A}^{*} T^{*}_{X\times X}(\ge_{k-1} \phi \times D_{T}^{*}p_{2}^{*} ( \psi ) ) + \gd_{A}^{*}D_{T_{X\times X}}^{*} (\ge_{nk} \phi \times p_{2}^{*} ( \psi ) ).
    \]

    For the support of the first term we have:
    \begin{align*}
        \MoveEqLeft[3] \SupP{ \gd_{A}^{*}T^{*}_{X\times X}( \phi \times D_{T}^{*}p_{2}^{*} ( \psi ) ) } \ccap \gD_{X\times X} \\
        &\csubset \gd^{-1} \bigl( \Supp{T^{*}_{X\times X} ( \phi \times D_{T}^{*}p_{2}^{*} ( \psi ) )} \ccap \gD_{\gd(X\times X)} \bigr) &\text{by Lemma~\ref{l:diagapp}}\\
        &\csubset \gd^{-1} \bigl( \Supp{ D_{T}^{*}p_{2}^{*} ( \psi ) \times \phi } \ccap \gD_{\gd(X\times X)} \bigr)\\
        &\csubset \gd^{-1} \bigl( \Supp{D_{T}^{*}p_{2}^{*} ( \psi ) } \times \Supp{\phi} \ccap \gD_{\gd(X\times X)} \bigr) \\
        &\csubset \Supp{D_{T}^{*}p_{2}^{*} ( \psi ) } \ccap \Supp{\phi} \ccap \gD_{X\times X} \\
        &\csubset \Supp{D_{T}^{*}p_{2}^{*} ( \psi ) } \ccap \gD_{\gd(X)} &\text{by assumption on $\phi$} \\
        &\csubset \Supp{p_{2}^{*} ( \psi ) } \ccap \gD_{\gd(X)} &\text{by Lemma~\ref{l:T}\eqref{i:C}} \\
        & \ceq \gd( \Supp{\psi } \ccap \gD_{X} ).
    \end{align*}

    Similarly, for the support of the second term we have:
    \begin{align*}
        \MoveEqLeft[3] \SupP{ \gd_{A}^{*} D_{T_{X\times X}}^{*} \bigl(\phi \times p_{2}^{*} ( \psi ) \bigr) } \ccap \gD_{X\times X} \\
        &\csubset \gd^{-1} \bigl( \Supp{ D_{T_{X\times X}}^{*} (\phi \times p_{2}^{*} ( \psi ) ) } \ccap \gD_{\gd(X\times X)} \bigr) &\text{by Lemma~\ref{l:diagapp}} \\
        &\csubset \gd^{-1} \bigl( \Supp{ \phi \times p_{2}^{*} ( \psi ) } \ccap \gD_{\gd(X\times X)} \bigr) &\text{by Lemma~\ref{l:T}\eqref{i:C}} \\
        &\csubset \gd^{-1} \bigl( \Supp{ \phi } \times \Supp{p_{2}^{*} ( \psi ) } \ccap \gD_{\gd(X\times X)} \bigr) \\
        &\csubset \Supp{ \phi } \ccap \Supp{p_{2}^{*} ( \psi ) } \ccap \gD_{X\times X} \\
        &\csubset \Supp{p_{2}^{*} ( \psi ) } \ccap \gD_{\gd(X)} &\text{by assumption on $\phi$} \\
        &\ceq \gd(\Supp{\psi } \ccap \gD_{X}).
    \end{align*}
\end{proof}

The next three formulas tell how cup, cap, and slant products interact with each other on the chain level.
\begin{Lemma}
    \cite{s81a}*{6.1.4-6}\label{l:Spanier 123}
    Let $\gs$ and $\gt$ be simplices in $X$ and let $\phi \in \C(X \otimes X)$, and $\psi \in \C(X)$.
    Then:
    \begin{enumerate}
        \item\label{e:scup}
        $\psi\cupp(\phi/\gt) =( p_{1}^{*} ( \psi ) \cupp \phi ]/\gt$.
        \item\label{e:cupcap}
        $\phi/(\psi \capp \tau)= (\phi \cupp p_{2}^{*} ( \psi ) )/\gt$.
        \item\label{e:cap}
        $(\phi /\sigma)\capp \tau=p_{1*}(\phi \capp (\gt \otimes \gs))$.
    \end{enumerate}
\end{Lemma}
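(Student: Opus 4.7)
The plan is to verify each identity by direct evaluation on basis elements, unfolding the definitions of cup, cap, and slant products and comparing term by term after applying the explicit Alexander--Whitney diagonal approximation
\[
\gd^A_*(\gs^k \otimes \gt^l) = \sum_{i, j}(-1)^{j(k-i)} ({}_i\gs \otimes {}_j\gt) \otimes (\gs_{k-i} \otimes \gt_{l-j}).
\]
This is the calculation carried out in Spanier~\cite{s81a}*{6.1.4-6}, and the present task is essentially to translate it into our notation.

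For \eqref{e:scup}, I would evaluate both sides on a simplex $\rho$. The left side $(\psi \cupp (\phi/\gt))(\rho)$, via the Alexander--Whitney diagonal on $X$, expands into a sum $\sum_i \pm\, \psi({}_i\rho)\, \phi(\rho_{m-i} \otimes \gt)$, where $m = \dim \rho$. The right side $((p_1^*(\psi) \cupp \phi)/\gt)(\rho) = (\gd_A^*(p_1^*(\psi) \times \phi))(\rho \otimes \gt)$ unfolds by applying $\gd_*^A$ to $\rho \otimes \gt$ and pairing with $p_1^*(\psi) \times \phi$; only the summands in $\gd_*^A(\rho \otimes \gt)$ whose second factor has the correct bigrading contribute, producing the same sum with the same signs. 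A parallel expansion handles \eqref{e:cupcap}, where the chain $\psi \capp \gt$ on the left side and the cup product with $p_2^*(\psi)$ on the right side both produce expressions of the form $\sum_i \pm\, \psi(\gt_p)\, \phi(\text{front face of } \gt \otimes \rho)$ that agree term by term.

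For \eqref{e:cap} one expands $\phi \capp (\gt \otimes \gs)$ using the cap-product formula in $\C(X \otimes X)$ and then applies $p_{1*}$, which (via augmentation on the second tensor factor) annihilates all summands except the one in which ${}_{k-j}\gs$ is a $0$-simplex, forcing $j = k$ and $i = n-k$. The surviving term is compared with $(\phi/\gs) \capp \gt$ computed by unfolding the slant product and then the cap product on $X$; after reconciling signs and the swap between the two tensor factors the two expressions coincide.

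The main obstacle is sign bookkeeping: one has to match the factors $(-1)^{j(k-i)}$ from $\gd^A_*$ against $(-1)^{i(l-j)}$ and $(-1)^{i(k-j)}$ from the cap-product formula and, for \eqref{e:cap}, the symmetry signs from exchanging the two factors. These signs are forced by the requirement that $\cupp$, $\capp$, and $/$ be chain maps of the stated degrees (a fact already used in Lemma~\ref{l:maps}), so the verification introduces no new idea beyond careful bookkeeping.
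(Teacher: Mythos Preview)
The paper does not supply its own proof of this lemma: it is stated with a citation to Spanier~\cite{s81a}*{6.1.4--6} and no argument is given. Your proposal---direct term-by-term verification on basis elements using the explicit Alexander--Whitney diagonal---is exactly the computation carried out in Spanier, so there is nothing to compare.

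One minor slip in your sketch: by the paper's convention the slant product is $(\phi/\gs)(\gt)=\phi(\gs\otimes\gt)$, so in part~\eqref{e:scup} the evaluation $(\phi/\gt)(\rho)$ equals $\phi(\gt\otimes\rho)$, not $\phi(\rho\otimes\gt)$ as you wrote; correspondingly the right-hand side unfolds as $(p_1^*(\psi)\cupp\phi)(\gt\otimes\rho)$. This only affects which tensor factor carries the front/back face in your expansion and does not change the method or the outcome.
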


\section{Algebraic Poincaré Duality}\label{s:alg pd}

In this section we find a criterion for a space to be a coarse PD($n$) space in terms of existence of a certain pair of (co)homology classes.
\begin{Definition}
    A pair of classes $U\in \Hx[n](X\otimes X - \gd(X))$ and $[X] \in \hx[n](X)$ is an \emph{orientation pair} if $U/[X]=1 \in \Hx[0](X-X)$.
    We will call $U$ a \emph{diagonal class} and $[X]$ a \emph {fundamental class}.
\end{Definition}
\begin{Remark}
    This condition is stronger than it looks.
    Since the only representative of $1 \in \Hx[0](X-X)$ is the constant function, the above equation is satisfied on the (co)chain level, independent of the choice of representatives for $U$ and $[X]$.
    It also follows that the support of any representative of $[X]$ is coarsely equal to $\gD$.
\end{Remark}
\begin{Lemma}\label{l:opair}
    Let $U\in \Hx[n](X\otimes X - \gd(X))$ and $[X] \in \hx[n](X)$ be a pair of classes and $p_{1*}:\cf(X\otimes X)\to \cf(X)$ is the projection map: $p_{1*}(\gs\otimes \tau)=\gs$.
    The following conditions are equivalent.
    \begin{enumerate}
        \item\label{i:op} $(U, [X])$ is an orientation pair.
        \item\label{i:X} $[X]\ne 0$ and $p_{1*}(U \capp ([X] \otimes [X]))=[X]$.
        \item\label{i:x} For any $x \in X$, $U( x \otimes [X] ) =1$.
    \end{enumerate}
\end{Lemma}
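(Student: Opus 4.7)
The plan is to fix a cocycle representative $\phi$ for $U$ and a cycle representative $c$ for $[X]$, and reduce each of the three conditions to the single assertion that a certain integer equals $1$. By Lemma~\ref{l:maps}\eqref{i:slant}, $\phi/c \in \Cx[0](X-A)$ with $A=X$ is a $0$-cocycle; but $\Cx[0](X-X)= \C[0](X)$ by the Remark after the orientation pair definition, and a $0$-cocycle in the full Alexander--Spanier complex is simply a constant function with some integer value $m$. So $(1)$ is exactly the assertion $m = 1$.

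For $(1) \Leftrightarrow (2)$, I would apply Lemma~\ref{l:Spanier 123}\eqref{e:cap} with $\sigma = \tau = c$ to obtain the chain-level identity
\[
(\phi/c) \capp c = p_{1*}\bigl(\phi \capp (c \otimes c)\bigr).
\]
Since $\phi/c$ is the constant cochain $m$, the left-hand side equals $mc$; thus $(2)$, at the chain level, reads $mc = c$, which combined with $c \ne 0$ (from $[X] \ne 0$) forces $m = 1$. The reverse implication is immediate: $m = 1$ yields $(2)$ directly, and $[X]\ne 0$ is automatic since the pairing produces the nonzero constant.

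For $(1) \Leftrightarrow (3)$, it suffices to establish the symmetry $\phi(c \otimes x) = \phi(x \otimes c)$ for every $x \in X$, after which $(1)$ and $(3)$ become the identical cochain-level statement. Since $T_*(c \otimes x) = x \otimes c$ (no sign, as $\deg x = 0$) and $c \otimes x$ is a cycle, the chain homotopy $T_* - \mathrm{id} = \partial D^T + D^T \partial$ on $\cf(X \otimes X)$ from Lemma~\ref{l:T} gives $x \otimes c - c \otimes x = \partial D^T(c \otimes x)$. Formal evaluation of $\phi$ and use of the cocycle condition then yield
\[
\phi(x \otimes c) - \phi(c \otimes x) = \phi\bigl(\partial D^T(c \otimes x)\bigr) = (d\phi)\bigl(D^T(c \otimes x)\bigr) = 0.
\]

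The main obstacle is making this last symmetry argument rigorous: the pairings $\phi(x \otimes c)$ and $\phi(c \otimes x)$ are each finite sums, since by the support condition on $\phi$ combined with local finiteness of $c$ only $\sigma \in \Supp{c}$ close to $x$ contribute; but $D^T(c \otimes x)$ may have widely spread support in $X \otimes X$. The support-preserving property of $D^T$ recorded in Lemma~\ref{l:T} (namely $\Supp{D^T_*(c')} \ccap \gD_{\gd(X)} \ceq \Supp{c'} \ccap \gD_{\gd(X)}$), combined with the cochain support condition on $\phi$, is what ensures the termwise cancellation is valid and the formal equation above is a genuine identity of integers; this is essentially a careful bookkeeping of the fact that cocycles vanish on boundaries.
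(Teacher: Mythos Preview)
Your argument for $(1)\Leftrightarrow(2)$ is exactly the paper's: apply Lemma~\ref{l:Spanier 123}\eqref{e:cap} with $\sigma=\tau=c$ to get $(\phi/c)\capp c = p_{1*}(\phi\capp(c\otimes c))$, and use that a $0$-cocycle is a constant.

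For $(1)\Leftrightarrow(3)$ the paper's proof is a single line: it just unwinds the slant product, writing $(U/[X])(x)=U(x\otimes[X])$, so that $(1)$ and $(3)$ are literally the same equation. Your symmetry detour via $D^{T}$ arises because the slant definition displayed earlier in the paper, $(\phi/\gs)(\gt)=\phi(\gs\otimes\gt)$, puts the divisor in the \emph{first} slot; but if you test that definition against Lemma~\ref{l:Spanier 123}\eqref{e:cap} you will find it is inconsistent, whereas the convention $(\phi/\gs)(\gt)=\phi(\gt\otimes\gs)$ makes both Lemma~\ref{l:Spanier 123}\eqref{e:cap} and the one-line proof of $(1)\Leftrightarrow(3)$ correct. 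So the discrepancy you are carefully working around is a typo in the displayed slant formula, not a genuine gap.

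That said, your chain-homotopy route is a legitimate alternative, and you are right to flag the finiteness issue as the crux. As written it does not close: the support control on $D^{T}$ recorded in Lemma~\ref{l:T} is only \emph{relative to} $\gD_{\gd(X)}$, while $c\otimes x$ is not coarsely supported near the diagonal of $X\times X$, so Lemma~\ref{l:T} gives you no handle on $\Supp{D^{T}(c\otimes x)}$ away from $\gD_{\gd(X)}$, and $\phi$ is unconstrained there. If you want to salvage this approach, replace $c$ by the finite subchain $c_{r}$ of simplices within radius $r$ of $x$ (both pairings are unchanged for large $r$ by the support condition on $\phi$), apply the homotopy identity to the finite chain $c_{r}\otimes x$, and then argue separately that the boundary term $\phi\bigl(D^{T}(\partial c_{r}\otimes x)\bigr)$ vanishes for large $r$; this last step still needs work, since $D^{T}(\partial c_{r}\otimes x)$ can contain terms with both factors supported on the far simplices of $\partial c_{r}$, which lie near $\gD_{\gd(X)}$ and on which $\phi$ need not vanish.
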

\begin{proof}
    \eqref{i:op}$\Leftrightarrow$\eqref{i:X}.
    By Lemma~\ref{l:Spanier 123}\eqref{e:cap}, $ p_{1*}(U \capp ([X]\otimes [X])) = (U/[X]) \capp [X] $. 
    For nonzero $[X]$ this equals to $[X]$ if and only if $U/[X]=1$.

    \eqref{i:op}$\Leftrightarrow$\eqref{i:x}  follows immediately from the formula $(U/[X])(x) = U(x \otimes [X])$.
\end{proof}
\begin{Theorem}\label{t:op}
    $X$ is a coarse $PD(n)$ space if and only if it has an orientation pair.
\end{Theorem}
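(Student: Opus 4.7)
I would prove the two directions separately, both exploiting the chain-level product machinery of Section~\ref{s:products}.

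For \emph{orientation pair implies coarse $\mathrm{PD}(n)$}, define the candidate duality maps directly by cap and slant products with the orientation pair:
\[
p(\psi) := \ge'_{n-k}\, \psi \capp [X], \qquad q(c) := \ge'_{n-k-1}\, U/c,
\]
for $\psi \in \C[k](X)$ and $c \in \cx[k](X)$. By Lemma~\ref{l:maps}\eqref{i:cap} and \eqref{i:slant}, $p$ and $q$ are chain maps satisfying the four support conditions of Definition~\ref{def pdn}. For the composition $qp$, the chain of identities
\[
qp(\psi) \;=\; U/(\psi \capp [X]) \;=\; (U \cupp p_{2}^{*}(\psi))/[X] \;\simeq\; (p_{1}^{*}(\psi) \cupp U)/[X] \;=\; \psi \cupp (U/[X]) \;=\; \psi
\]
uses Lemma~\ref{l:Spanier 123}\eqref{e:cupcap}, the controlled chain homotopy of Lemma~\ref{l:maps}\eqref{i:cupx} (slanted with $[X]$, which yields the homotopy $F$), Lemma~\ref{l:Spanier 123}\eqref{e:scup}, and the hypothesis $U/[X] = 1$. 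Analogously, for $pq$,
\[
pq(c) \;=\; (U/c) \capp [X] \;=\; p_{1*}(U \capp ([X] \otimes c)) \;\simeq\; p_{2*}(U \capp ([X] \otimes c)) \;=\; (U/[X]) \capp c \;=\; c,
\]
using Lemma~\ref{l:Spanier 123}\eqref{e:cap}, and for the middle chain homotopy $G$ combining the $T$-symmetrization of Lemma~\ref{l:maps}\eqref{i:capx} with Lemma~\ref{l:T} and the identity $p_{1*}T_{*} = p_{2*}$.

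For \emph{coarse $\mathrm{PD}(n)$ implies orientation pair}, set $[X] := p(1) \in \cx[n](X)$, where $1 \in \C[0](X)$ is the constant function; since $d(1) = 0$ and $p$ is a chain map, $[X]$ is an $n$-cycle. The diagonal class $U \in \Cx[n](X \otimes X - \gd(X))$ is constructed from the PD-data so that the chain-level identity $U/c = q(c)$ holds on coarse chains $c \in \cx(X)$. The support condition $\Supp{U} \ccap \gD_{X \times X} \csubset \gD_{\gd(X)}$ then follows from the uniform control $\Supp{q(c)} \ccap \gD \csubset \Supp{c}$ of Definition~\ref{def pdn}, and the cocycle equation $dU = 0$ from the chain map property of $q$. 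The orientation pair equation is then
\[
U/[X] \;=\; q([X]) \;=\; qp(1) \;=\; 1 + F(d(1)) + d(F(1)) \;=\; 1,
\]
using $d(1) = 0$ and $F(1) \in \C[-1](X) = 0$, so that $qp(1) = 1$ exactly on the chain level; Lemma~\ref{l:opair} then confirms $(U, [X])$ is an orientation pair.

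\textbf{Main obstacle.} The hard step is the construction of $U$ as a genuine cocycle in $\Cx[n](X \otimes X - \gd(X))$. The pairing $(a,b) \mapsto q(a)(b)$ is only defined when $a$ is a coarse chain, but $U$ must be a cochain on all of $\cf(X) \otimes \cf(X)$. Extending this definition off the coarse subcomplex while preserving both the cocycle property and the coarse support condition requires careful control over simplices of large diameter, following the strategy of McCrory~\cite{m77} in the classical setting adapted to the coarse framework.
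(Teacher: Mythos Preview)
Your approach matches the paper's almost exactly: the duality maps in the backward direction are the cap and slant products with the orientation pair, and the compositions are unwound via Lemma~\ref{l:Spanier 123} together with the controlled homotopies of Lemma~\ref{l:maps}. One small correction in your $pq$ chain: the equality $p_{2*}(U\capp([X]\otimes c))=(U/[X])\capp c$ is not literally true; what actually appears is $T^{*}U$ rather than $U$. The paper handles this by applying Lemma~\ref{l:maps}\eqref{i:capx} directly to pass from $p_{1*}(\phi\capp(c\otimes a))$ to $p_{1*}(T^{*}\phi\capp(a\otimes c))=(T^{*}\phi/c)\capp a$, and then observes (again via \eqref{i:capx}) that $T^{*}U$ is also a diagonal class for $[X]$, with $T^{*}U/[X]=\ge_{n}$. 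Your route through $p_{2*}$ and Lemma~\ref{l:T} is equivalent once you track this.

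Your ``main obstacle'' is not actually an obstacle, and the paper dispatches it in one line. You worry that $U$ must be defined on all of $\cf(X)\otimes\cf(X)$ while the pairing $q(a)(b)$ only makes sense for coarse $a$; but every single simplex $\gt$ is a finite chain, hence trivially a coarse chain, so $q(\gt)\in\C(X)$ is already defined. The paper therefore simply sets
\[
\phi(\gs^{k}\otimes\gt^{n-k}):=-\ge'_{k+1}\,q(\gt)(\gs)
\]
on basis elements, which is a well-defined cochain on all of $\cf(X\otimes X)$ with no extension required. The cocycle check is a two-line computation using that $q$ is a chain map, and the support condition $\Supp{\phi}\ccap\gD_{X\times X}\csubset\gD_{\gd(X)}$ follows directly from the control on $q$ in Definition~\ref{def pdn}. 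There is no need to invoke McCrory-style extension arguments or to control large-diameter simplices separately.
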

\begin{proof}
    Let $X$ be a coarse $PD(n)$ space and let $p$ and $q$ be the associated homotopy equivalences.
    Set $c = p(1) \in \cx[n] (X)$ where $1 \in \Cx[0](X-X)$, and define $\phi \in \C(X\otimes X)$ by the rule $\phi(\gs^{k} \otimes \gt^{n-k})=-\ge'_{k+1}q(\gt)(\gs) $.
    Note that, if $\sigma \otimes \tau \subset[r] \gD_{X \times X}$ and $q(\tau)(\sigma)\neq 0$, then it follows from the property of $q$ that $\sigma$ and $\tau$ will be $r'$ close to each other where $r'$ depends only on $r$.
    Hence we have $\phi \ccap \gD_{X \times X} \csubset \gD_{\gd(X)}$, i.e.
    $\phi \in \Cx[n](X\otimes X - \gd(X))$.
    Moreover, $\phi$ is a cocycle, since
    \begin{multline*}
        d\phi(\gs^{k} \otimes \gt^{n-k+1})=\phi(\d\gs \otimes \gt + \ge_{k}\gs \otimes \d\gt) = - \ge'_{k}q(\gt)(\d\gs)-\ge_{k}\ge'_{k+1}q(\d\gt)(\gs) \\
        = - (\ge'_{k}+\ge_{k}\ge'_{k+1})dq(\gt)(\gs) =0.
    \end{multline*}

    Then for $x \in X$ we have $\phi(x \otimes c)= -\ge'_{1}q(c)(x)=qp(1)(x)=1(x)=1$.
    Thus by Lemma~\ref{l:opair}(3) $([\phi], [c])$ is an orientation pair.

    Conversely, suppose $X$ has a fundamental class $[X]=[c]$ and an orientation class $U=[\phi]$.
    We claim that the chain maps $\ge'_{n-k-1}\phi/\cdot : \cx[k](X) \to \C[n-k](X)$ and $\ge'_{n}\ge'_{n-k} \cdot \capp c : \C[k](X) \to \cx[n-k](X)$ are controlled chain homotopy equivalences as in the definition of a coarse $PD(n)$ space.

    By Lemma~\ref{l:maps}\eqref{i:capx} $\ge_{n}T^*U \capp ([X]\otimes [X]) = U \capp ([X]\otimes [X]) $, thus by Lemma \ref{l:opair}(2), $\ge_{n}T^*U$ is also a diagonal class for $[X]$ and $T^*U/[X]=\ge_{n}$.

    Note that $ \ge'_{n}\ge'_{n-k}\ge'_{n-k-1}=\ge_{nk}\ge_{n}$.
    Let $a\in \cx[k](X)$.
    Then we have
    \begin{align*}
        \ge_{nk}\ge_{n}(\phi/a)\capp c &= \ge_{n}p_{1*}(\ge_{nk}\phi\capp (c \otimes a))&&\text{by Lemma \ref{l:Spanier 123}\eqref{e:cap} } \\
        &\simeq \ge_{n}p_{1*}(T^{*}\phi\capp (a \otimes c) )&& \text{by Lemma~\ref{l:maps}\eqref{i:capx} } \\
        &=\ge_{n} (T^*\phi/c ) \capp a &&\text{by Lemma \ref{l:Spanier 123}\eqref{e:cap} }\\
        &=a.
    \end{align*}
    Note that the only homotopy we used here is the composition of $p_{1*}$ with the homotopy from Lemma~\ref{l:maps}\eqref{i:capx}, so it has the property $\Supp{D(a)} \csubset \Supp{a}$.

    Similarly, $\ge'_{n}\ge'_{k-1}\ge'_{n-k}=\ge_{nk}$, and for $\phi \in \C[k](X)$ we have
    \begin{align*}
        \ge_{nk} \phi/(\psi\capp c ) &=\ge_{nk} (\phi \cupp p_{2}^{*}(\psi) )/c &&\text{by Lemma \ref{l:Spanier 123}\eqref{e:cupcap} }\\
        &\simeq (p_{1}^{*} ( \psi ) \cupp \phi )/c &&\text{by Lemma~\ref{l:maps}\eqref{i:cupx} } \\
        &= \psi \cupp (\phi/c ) && \text{by Lemma \ref{l:Spanier 123}\eqref{e:scup} }\\
        &=\psi.
    \end{align*}
    Lemma~\ref{l:maps}\eqref{i:cupx} implies that the homotopy above satisfies $\Supp{D(\phi)} \ccap \gD_{X} \csubset \Supp{\phi}$.
\end{proof}
\begin{Corollary}
    Any proper, uniformly acyclic $n$-manifold is a coarse PD($n$) space.
\end{Corollary}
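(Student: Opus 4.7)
The plan is to invoke Theorem~\ref{t:op}, so that it suffices to construct an orientation pair $(U,[X])$ on $X$. Both ingredients will be produced from the local structure of the manifold together with uniform acyclicity, which provides a single function $\rho:[0,\infty)\to [0,\infty)$ controlling the scale at which local homological constructions can be carried out.

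First I would construct the fundamental class $[X]\in \hx[n](X)$. At each $x\in X$ the local homology $H_n(X,X-\{x\})$ is $\zz$ (working with $\zz/2$ coefficients in the non-orientable case). Uniform acyclicity gives a uniform $r$ such that every ball of radius $r$ behaves homologically as $\rr^n$, so the local fundamental classes assemble into a locally finite singular $n$-cycle $c$ whose simplices all have diameter at most $\rho(r)$. Then $\Supp{c}\csubset \gD$, so $c\in \cx[n](X)$, and standard manifold theory guarantees that $c$ is a nonzero cycle and thus represents $[X]$.

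Next I would build the diagonal class $U\in \Hx[n](X\otimes X-\gd(X))$. For a pair $\gs\otimes \gt$ lying within a uniform distance of $\gd(X)$, both simplices sit inside a single ball $B$ in which uniform acyclicity forces $H_n(B,B-\{x\})\cong\zz$; in that local model we set $U(\gs\otimes\gt)$ to be the local intersection number (Thom/diagonal class) computed against the local fundamental class chosen in the first step, and we declare $U(\gs\otimes\gt)=0$ on all other simplices. The local Thom classes are cocycles, and uniform acyclicity provides controlled fillings on overlaps so that the globally assembled $U$ is actually a cocycle on the chain level, while the vanishing-off-the-diagonal prescription gives $\Supp{U}\ccap \gD_{X\times X}\csubset \gD_{\gd(X)}$, i.e.\ $U\in \Cx[n](X\otimes X-\gd(X))$. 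Finally, to verify $(U,[X])$ is an orientation pair I would apply Lemma~\ref{l:opair}\eqref{i:x}: for each $x\in X$ only the finitely many simplices of $c$ near $x$ contribute to $U(x\otimes [X])$, and the value reduces to the standard $H^n(\rr^n,\rr^n\setminus 0)\otimes H_n(\rr^n,\rr^n\setminus 0)\to \zz$ pairing, which is $1$.

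The hard part is the middle step: turning the pointwise/local Thom classes into a globally defined coarse cocycle with the required support property. Matching the local models on overlaps and controlling the size of the necessary fillings has to be done uniformly in $x$, and it is precisely uniform acyclicity that supplies these bounds. Once this local-to-global gluing is in hand, the cocycle identity and the support condition $\Supp{U}\ccap \gD_{X\times X}\csubset \gD_{\gd(X)}$ follow, and Theorem~\ref{t:op} concludes that $X$ is a coarse PD($n$) space.
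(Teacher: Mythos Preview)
Your overall strategy matches the paper's: invoke Theorem~\ref{t:op} by exhibiting an orientation pair, build the coarse fundamental class from the manifold's locally finite fundamental cycle, and verify the pairing via Lemma~\ref{l:opair}\eqref{i:x}. The difference lies in how the diagonal class $U$ is produced. You propose to assemble $U$ from local Thom classes and then use uniform acyclicity to supply ``controlled fillings on overlaps'' so that the patched cochain becomes an honest cocycle; you correctly flag this gluing as the hard part. The paper sidesteps this difficulty entirely: it starts with a \emph{global} singular cocycle $\phi'$ representing the Thom class $V\in \H[n](X\times X, X\times X-\gd(X))$ (which exists because $X$ is an acyclic manifold), and then uses uniform acyclicity not to glue cocycles but to build a controlled chain map $B:\cf(X\times X)\to \cs(X\times X)$ that fills coarse simplices by singular chains of bounded diameter. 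The coarse cocycle is then simply $\phi=\phi'\circ B$, which is automatically a cocycle because $B$ is a chain map, and the support condition follows because $\phi'$ vanishes on singular simplices missing $\gd(X)$.

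So the paper's route buys you the cocycle condition for free and turns your ``hard part'' into the more routine task of inductively filling simplices in a uniformly acyclic space. Your approach is not wrong, but carrying out the patching of local Thom classes into a global coarse cocycle would essentially force you to rediscover the map $B$ (or something equivalent) anyway. A minor side point: the uniform bound on the diameters of the simplices in $c$ comes from subdividing a given locally finite fundamental cycle, not directly from uniform acyclicity as your write-up suggests.
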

\begin{proof}
    Suppose $X$ is a proper and uniformly acyclic $n$-manifold.
    Since $X$ is an acyclic manifold, it has an orientation class $V \in \H[n](X\times X, X\times X - \gd(X))$ such that $V|_{x \times (X, X - x)}$ is a generator of $\H[n] (x \times (X, X - x))$ for all $x \in X$.
    It also has a fundamental class $[X] \in \hlf[n](X)$ which similarly restricts to generators of $\h[n](X, X - x) $.
    Choosing $V$ and $[X]$ in compatible way we obtain $V( x \times [X] ) =1$ for all $x \in X$.
    Let $\phi'$ and $c'$ be singular (co)cycles representing these classes.
    We now construct their coarse versions.

    By subdividing $c'$ as necessary we can obtain another representative $c$, whose singular simplices have uniformly bounded diameter.
    Since $X$ is proper and $c$ is locally finite, any bounded set intersects $\Supp{c}$ in finitely many simplices, and therefore $c$ is a coarse cycle.

    Using uniform acyclicity of $X\times X$ we can inductively fill coarse simplices by singular chains in a controlled manner.
    Indeed, 0-simplices are naturally identified, and if the faces of a coarse simplex are already filled by singular chains, then their signed sum is a cycle, which by uniform acyclicity bounds a singular chain contained in a controlled neighborhood of the simplex.

    This gives a chain map $B:\cf(X \times X) \to \cs(X \times X)$ with the property $\Supp{B(\gs^{k} ) }\subset[\gr_{k}(\diam(\gs^{k}))] \Supp{\gs^{k} } $, where $\gr_{k}$ is a suitable sequence of control functions.

    Set $\phi (\gs^{n})=\phi'(B(\gs^{n}))$.
    Since $\phi' $ vanishes on singular simplices in $X\times X - \gd(X)$, $\phi(\gs)=0$ if $\gr_{n}(\diam(\gs)) < d(\gs, \gd(X))$.
    Thus $\phi \in \Cx[n](X \times X -\gd(X) )$.

    Let $U:=S^*(\phi)\in \Cx[n](X \otimes X -\gd(X) )$ where $S:\cf(X\otimes X)\to \cf(X\times X)$ is the simplicial cross product map defined in Section~\ref{s:products}.
    By construction, $U(x\otimes c)=\phi( S(x \otimes c ))=\phi(x\times c) =1$ for all $x \in X$.
    Therefore, by Lemma~\ref{l:opair} $(U, [c])$ is an orientation pair, and the claim follows.
\end{proof}
\begin{Remark}
    Essentially the same argument shows that proper, uniformly acyclic ENR homology $n$-manifolds are also coarse PD($n$) spaces.
\end{Remark}

\section{Algebraic duality}\label{s:Algebraic duality}
In this section we show that chains and cochains with $\zz$ coefficients are dual to each other in the sense of taking $\Hom(\cdot, \zz)$.
This requires some algebraic preliminaries, most of which can be found in~\cite{em02}.

Let $R$ be a commutative ring with $1$. 
Consider a countable direct product $R^{\nn}$.
Let $e_{i}$ denote the image in $R^{\nn}$ of $1$ in the $i^{\text{th}}$ factor.
Given an $R$-module $H$ and homomorphism $\phi: R^{\nn} \to H$, the support of $\phi$ is $\Supp{\phi}=\{i \in \nn \mid \phi(e_{i}) \neq 0\}$.
$H$ is \emph{slender} if for every homomorphism $\phi: R^{\nn} \to H$, $\Supp{\phi}$ is finite.
A ring $R$ is slender if it is a slender module over itself.

The following is a classical result, going back to Specker.
For the sake of completeness, we provide a proof.
\begin{Lemma}
    [cf.
    Corollary III.2.4 in \cite{em02}]\label{l:z}
    $\zz$ is slender.
\end{Lemma}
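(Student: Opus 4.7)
The plan is to argue by contradiction, following Specker's classical approach. Suppose $\phi: \zz^\nn \to \zz$ is a homomorphism with infinite support, so $a_n := \phi(e_n) \neq 0$ for infinitely many $n$. By precomposing with the coordinate inclusion corresponding to such a subsequence, I may assume $a_n \neq 0$ for every $n$.

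The basic tool is a divisibility observation: if $y=(y_n)\in\zz^\nn$ has every coordinate divisible by an integer $m$, then $y = m z$ with $z = (y_n/m)\in\zz^\nn$, and hence $m \mid \phi(y)$. Splitting any $x \in \zz^\nn$ as head plus tail, $x = (x_1,\dots,x_N,0,0,\dots) + (0,\dots,0,x_{N+1},x_{N+2},\dots)$, and using $\zz$-linearity on the finitely supported head, one obtains the congruence
\[
\phi(x) \;\equiv\; \sum_{n=1}^N x_n a_n \pmod m
\]
whenever $m$ divides $x_n$ for every $n > N$. Thus $\phi$ is effectively controlled by its values on the $a_n$'s once the tail is sufficiently divisible.

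Next I would feed $\phi$ rapidly growing test sequences such as $x = (n!)_n$ (or $(2^n)_n$, possibly multiplied by signs $\epsilon_n \in \{0,1\}$). For any prime power $p^k$, the tail of such a sequence eventually lies in $p^k \zz^\nn$, so $\phi(x)$ is pinned down modulo $p^k$ by a finite partial sum involving only the $a_n$. Varying $\epsilon$ yields an entire family of such integers, all constrained by compatible $p$-adic congruences determined by the fixed data $(a_n)$.

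The main obstacle, and the heart of Specker's argument, is to extract an actual contradiction from this congruence data. The standard move is an inductive construction: build two input sequences $x, y \in \zz^\nn$ whose difference is concentrated in the tail and divisible by arbitrarily large powers of a fixed prime $p$, while using the nonvanishing of the $a_n$'s to arrange the head contributions so that $\phi(x) - \phi(y)$ is forced to equal a prescribed nonzero integer $c$. By the divisibility observation applied to the tail, $p^k \mid c$ for every $k$, so $c \in \bigcap_k p^k \zz = \{0\}$, a contradiction. The delicate part is the bookkeeping in the inductive step, balancing the need to shovel high powers of $p$ into the tail against the need to keep an explicit nonzero head contribution built from the $a_n$'s.
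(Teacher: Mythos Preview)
Your setup is exactly right and matches the paper's approach: split into head plus tail, and use that a tail with all entries divisible by $m$ forces $m \mid \phi(\text{tail})$. The gap is in your final paragraph. As you state it, the mechanism is incoherent: if $x-y$ is concentrated in the tail, then $\phi(x)-\phi(y)=\phi(x-y)$ depends only on that tail, so there are no ``head contributions'' to arrange; you get divisibility of $\phi(x)-\phi(y)$ but no reason for it to equal a fixed nonzero $c$. Conversely, if you make the heads differ to produce a nonzero finite sum of $a_n$'s, the tails no longer constrain that difference. So the two desiderata you list cannot be imposed simultaneously in the way you describe, and the contradiction does not close.

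The paper resolves this with a single test vector rather than a pair. Define $v=\sum_k 2^{n_k}e_k$ where the exponents are chosen inductively by $2^{n_k}>k+|\phi(h_k)|$, with $h_k$ the head of length $k-1$ of $v$ (this depends only on $n_1,\dots,n_{k-1}$, so the recursion is legitimate). Now $\phi(v)$ is some fixed integer; for every $k\ge|\phi(v)|$ one has $|\phi(t_k)|\le|\phi(v)|+|\phi(h_k)|\le k+|\phi(h_k)|<2^{n_k}$, while $2^{n_k}\mid\phi(t_k)$ since every entry of $t_k$ is divisible by $2^{n_k}$. Hence $\phi(t_k)=0$ for all such $k$, and then $2^{n_k}\phi(e_k)=\phi(t_k)-\phi(t_{k+1})=0$ gives $\phi(e_k)=0$. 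This is the ``delicate bookkeeping'' you allude to: the point is not to pit two sequences against each other, but to race the growth of $2^{n_k}$ against the single fixed integer $\phi(v)$ plus the already-determined head.
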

\begin{proof}
    Let $\phi:\zz^\nn \to \zz$ be a homomorphism.
    We will produce an element $v\in \zz^\nn$ of the form $\sum 2^{n_{k}} e_{k}$ with fast growing powers of $2$ such that $\phi(e_{k+1})=0$ for $k\geq\abs{\phi(v)}$, and hence $\phi$ has finite support.
    (Here and below $\abs{ \phi (\cdot)}$ denotes the absolute value of $\phi (\cdot)$.)

    For each $k$, let $v = h_k + t_k$ be the splitting of $v$ into the head of length $k-1$ and the infinite tail.
    At each step $k$ we already have $h_k$ (starting with $h_{1}=0$), and we choose $n_k > n_{k-1}$ so that $2^{n_{k}} > k + \abs{ \phi ( h_{k} ) }$.

    Now let $k \geq \abs{ \phi (v) }$ be arbitrary.
    Since $\phi (v) = \phi (h_k) + \phi (t_k)$, we have
    \[
    \abs{ \phi (t_k) } \le \abs{\phi (v) } + \abs{ \phi (h_k) } \le k + \abs{ \phi (h_k) } < 2^{n_{k}}.
    \]
    But $\phi (t_k)$ is a multiple of $2^{n_k}$, since $t_k$ is.
    So $\phi ( t_k )=0$ and therefore $2^{n_{k+1}} \phi (e_{k+1}) = \phi (t_{k+1}) - \phi (t_{k})=0$, proving the claim.
\end{proof}
\begin{Theorem}
    [cf.
    Corollary III.3.4 in \cite{em02}]\label{t:slender}
    If $H$ is slender and $I$ is not an $\go$-measurable cardinal, then for any $\phi: R^{I} \to H$, if $\phi_{|\bigoplus_{I}R}=0$, then $\phi=0$.
\end{Theorem}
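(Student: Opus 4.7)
The plan is to reduce the theorem to its countable analogue via a Specker--N\"obeling strengthening of Lemma~\ref{l:z}, and then leverage the non-$\omega$-measurability of $|I|$ through a $\sigma$-complete ultrafilter argument on the power set of $I$.

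First I would handle the base case $I = \nn$: given $\phi : R^\nn \to H$ with $\phi|_{\bigoplus_\nn R} = 0$, slenderness together with the hypothesis yields $\phi(e_i) = 0$ for every $i$. The crucial upgrade is that this forces $\phi \equiv 0$, not merely vanishing on the basis; this is proved by a divisibility argument modelled on the proof of Lemma~\ref{l:z}: given $a \in R^\nn$, decompose $a$ into a finite head and an infinitely divisible tail via rapidly growing multiples (powers of $2$ in the $\zz$ case, a suitable analogue for general slender $R$), and use slenderness to kill the tail.

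For the uncountable case, suppose for contradiction $\phi(a) \neq 0$ for some $a \in R^I$. For $S \subseteq I$, let $a_S$ denote the element of $R^I$ agreeing with $a$ on $S$ and vanishing outside, and define
\[
\mathcal{I} = \{ S \subseteq I \mid \phi(a_T) = 0 \text{ for every } T \subseteq S \}.
\]
Closure of $\mathcal{I}$ under subsets is tautological, and closure under finite unions follows from additivity of $\phi$. The key point is $\sigma$-completeness: given a decomposition $S = \bigsqcup_n S_n$ with all $S_n \in \mathcal{I}$ and any $U \subseteq S$, the auxiliary map
\[
\psi : R^\nn \to H, \qquad \psi(r_0, r_1, \dots) = \phi\Bigl(\sum_n r_n a_{U \cap S_n}\Bigr)
\]
is well-defined by disjointness of the $S_n$ and satisfies $\psi(e_n) = \phi(a_{U \cap S_n}) = 0$ since $U \cap S_n \subseteq S_n \in \mathcal{I}$. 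The countable case applied to $\psi$ forces $\psi \equiv 0$; evaluating at $(1, 1, \dots)$ gives $\phi(a_U) = 0$, so $S \in \mathcal{I}$. The countable case also places every countable subset of $I$ in $\mathcal{I}$, while the standing assumption $\phi(a) \neq 0$ keeps $I \notin \mathcal{I}$. Standard Boolean-algebra arguments then extract from $\mathcal{I}$ a non-principal $\sigma$-complete ultrafilter on $I$, contradicting the hypothesis that $|I|$ is not $\omega$-measurable.

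The main obstacle is the countable-case upgrade, which is the substantive form of the Specker--N\"obeling theorem and where slenderness of $H$ plays its most essential role --- the definition in the excerpt only controls $\phi$ on the basis $\{e_i\}$, but the argument needs $\phi$ to annihilate a whole tail subproduct. A secondary technical point is the passage from the proper $\sigma$-ideal $\mathcal{I}$ to an honest non-principal $\sigma$-complete ultrafilter on $I$, which invokes the standard set-theoretic characterization of non-$\omega$-measurability.
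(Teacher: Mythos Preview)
The paper does not prove this theorem; it is quoted from \cite{em02} without argument, so there is no in-paper proof to compare against. Your outline is the standard \L o\'s--Fuchs strategy, and the $\sigma$-ideal construction in the uncountable step is correct. Two of your steps, however, are genuine gaps rather than routine details.

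First, the countable upgrade cannot be proved by a divisibility argument modelled on Lemma~\ref{l:z}. That proof uses the absolute value on $\zz$ to force a highly divisible integer of bounded size to vanish; a general slender module $H$ carries no such structure, so there is no ``suitable analogue for general slender $R$'' in the form you describe. The correct argument (S\k{a}siada) is a partition trick: if $\phi|_{R^{S_k}}\neq 0$ for infinitely many disjoint $S_k\subset\nn$, choose $b_k\in R^{S_k}$ with $\phi(b_k)\neq 0$ and note that $(r_k)\mapsto\phi\bigl(\sum_k r_k b_k\bigr)$ contradicts slenderness; iterating forces $\phi$ to vanish on an entire tail product $\prod_{i\ge N}R$, which is the upgrade you need. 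Second, the extraction of a $\sigma$-complete ultrafilter from $\mathcal{I}$ is not a ``standard characterization'': a proper $\sigma$-complete ideal containing all countable sets need not yield such an ultrafilter (the Lebesgue null ideal on $\rr$ already shows this). What saves you is an extra property you did not record: the same slenderness trick shows that any countable disjoint family of $\mathcal{I}$-positive sets is finite, so $P(I)/\mathcal{I}$ has no infinite antichain and therefore has an atom, and the preimage of that atom gives the non-principal $\sigma$-complete ultrafilter you want.
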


We recall that a cardinal is \emph{$\go$-measurable} if it supports a non-principal ultrafilter which is closed under countable intersections.
The existence of $\go$-measurable cardinals cannot be proved in ZFC, and it is unknown if the existence is consistent with ZFC.
Moreover, if they exist, the first $\go$-measurable cardinal $\gk$ must be strongly inaccessible, i.e.
if $\gl < \gk$, then $2^{\gl} < \gk$ and $\gk$ cannot be obtained as $\gl$-union of $\gl$ cardinals.

Let $\cS$ be a family of subsets of $I$ which is closed under taking subsets and finite unions.
These conditions ensure that the set of $\cS$-supported functions $M_{\cS}=\{ a : I \to R \mid \Supp{a} \in \cS \}$ is a submodule of $R^{I}$, so we will call such $\cS$ a \emph{supportive} family.
The dual of $\cS$, given by $\cS^{*}=\{ T \subset I \mid \forall S \in \cS \quad \#(S \cap T)<\infty \}$ is also supportive.
Note that there is a natural bilinear pairing $\langle \, , \rangle: M_{\cS}\times M_{\cS^{*}} \to R$ given by $\langle a, b\rangle = \sum_{i\in I}a(i)b(i)$.
\begin{Lemma}\label{l:slender}
    Suppose $R$ is slender, the cardinality $I$ is less than the first $\go$-measurable cardinal, and $\cS$ is a supportive family which covers $I$.
    Then the evaluation map $e: M_{\cS^{*}} \to \Hom(M_{\cS} , R)$ given by $e(b)(a)=\langle a, b \rangle$ is an isomorphism.
\end{Lemma}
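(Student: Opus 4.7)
The plan is to construct an explicit inverse to $e$. Given $\phi \in \Hom(M_{\cS}, R)$, the natural candidate is $b: I \to R$ defined by $b(i) = \phi(e_i)$; this makes sense because every singleton lies in $\cS$ (since $\cS$ covers $I$ and is closed under subsets), so each $e_i$ belongs to $M_{\cS}$. Injectivity of $e$ is then immediate: if $e(b) = 0$, evaluating on $e_i$ yields $b(i) = 0$ for every $i$.

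The first nontrivial step will be to verify $b \in M_{\cS^*}$, i.e., that $\Supp{b} \cap S$ is finite for every $S \in \cS$. I will fix $S \in \cS$ and observe that $R^S \subset M_{\cS}$, since every subset of $S$ again lies in $\cS$. If $\Supp{b} \cap S$ were infinite, I could choose a countable subset $\{i_k\}_{k \in \nn}$ of it and compose $\phi|_{R^S}$ with the evident embedding $R^{\nn} \hookrightarrow R^S$ sending $e_k$ to $e_{i_k}$; the resulting homomorphism $R^{\nn} \to R$ would have infinite support, contradicting slenderness of $R$.

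The second step is the identity $\phi(a) = \langle a, b \rangle$ for every $a \in M_{\cS}$. Here I will pick $S \in \cS$ containing $\Supp{a}$, so $a \in R^S$, and consider the two homomorphisms $\phi|_{R^S}: R^S \to R$ and $a' \mapsto \langle a', b \rangle$ (the latter well-defined on $R^S$ by the previous step, since $\Supp{b} \cap S$ is finite). They agree on $\bigoplus_S R$ by a direct evaluation on basis vectors. Since $|S| \leq |I|$ is below the first $\go$-measurable cardinal, Theorem~\ref{t:slender} upgrades this to agreement on all of $R^S$, yielding $\phi(a) = \langle a, b \rangle$ as required.

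The main subtlety will be to keep the two distinct invocations of slenderness straight: slenderness of $R$ itself, applied through countable subproducts $R^{\nn} \hookrightarrow R^S$, controls the support of $b$, while Theorem~\ref{t:slender}, which consumes the cardinality hypothesis on $I$, is what promotes agreement on $\bigoplus_S R$ to agreement on the full product $R^S$. No computation beyond these structural observations should be required; the art is solely in choosing, for each test element $a$, the correct restriction $R^S$ on which to apply each principle.
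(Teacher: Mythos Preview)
Your proof is correct and follows essentially the same route as the paper: define the candidate $b(i)=\phi(e_i)$, use slenderness of $R$ via a countable subproduct to force $\Supp{b}\cap S$ finite for each $S\in\cS$, and then invoke Theorem~\ref{t:slender} on $R^{S}$ (with $S=\Supp{a}$) to upgrade agreement on $\bigoplus_{S}R$ to agreement on all of $R^{S}$. The paper packages the last step as ``empty support implies zero'' applied to $\phi-e(b)$, which is the same argument restated.
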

\begin{proof}
    The injectivity of $e$ follows easily from the hypothesis that $\cS$ covers $I$.

    To prove surjectivity, let $f \in \Hom(M_{\cS} , R)$.
    We claim that $\Supp{f} \in \cS^{*}$.
    Indeed, if the intersection $\Supp{f} \cap S$ is infinite for some $S \in \cS$, then it contains an infinite countable subset $C$, and thus $R^{C} \subset M_{\cS} $ and $\Supp{f_{|R^{C}}}=C$, contradicting slenderness of $R$.

    We also claim that $\Supp{f}=\emptyset$ implies $f=0$.
    If $f(a)\neq 0$ for some $a \in M_{\cS}$, then since $M_{\cS}$ contains $R^{\Supp{a} }$ we get a contradiction to Theorem~\ref{t:slender} for $f|_{R^{\Supp{a} } }$.

    The first claim shows that the characteristic function $1_{\Supp{f}} \in M_{\cS^{*}}$, and the second claim implies that $f-e(1_{\Supp{f}} )= 0$.
\end{proof}

Let $X$ be a metric space with two subspaces $A$ and $B$.
The families
\begin{align*}
    \cH &=\{S \subset X \mid S \csubset B \text{ and } \forall r \quad \#( S \cap N_{r} (A) ) < \infty \}, \\
    \cC &=\{T \subset X \mid T \ccap B \csubset A \}
\end{align*}
are supportive and cover $X$.
\begin{Lemma}\label{l:dual0}
    The families $\cH$ and $\cC$ are dual to each other: $\cH^{*}=\cC$ and $\cC^{*}=\cH$.
\end{Lemma}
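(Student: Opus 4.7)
The plan is to prove both equalities $\cH^{*}=\cC$ and $\cC^{*}=\cH$ by the same pair of arguments, run in opposite directions. Unwinding the definition of the dual family, $T\in \cH^{*}$ means $\#(S\cap T)<\infty$ for every $S\csubset B$ whose intersections $S\cap N_{r}(A)$ are all finite, and dually $S\in \cC^{*}$ means $\#(S\cap T)<\infty$ for every $T$ satisfying $T\cap N_{r}(B)\csubset A$ for every $r$.

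The easy inclusions $\cH\subset \cC^{*}$ and $\cC\subset \cH^{*}$ will both follow from a single calculation: given $S\in \cH$ and $T\in \cC$, we pick $r$ with $S\subset N_{r}(B)$, whence $S\cap T\subset T\cap N_{r}(B)\subset N_{r'}(A)$ for some $r'$ by the defining property of $\cC$, so $S\cap T\subset S\cap N_{r'}(A)$ is finite by the defining property of $\cH$.

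For the reverse inclusions, I plan to argue by contradiction, using two kinds of witness subsets. To establish $\cC^{*}\subset \cH$, suppose $S\in \cC^{*}$. If $S\not\csubset B$, select distinct $y_{i}\in S$ with $d(y_{i},B)\to \infty$ and set $T=\{y_{i}\}$; each $T\cap N_{r}(B)$ is then finite, so $T\in \cC$, yet $S\cap T=T$ is infinite. If instead $S\cap N_{r}(A)$ is infinite for some $r$, take $T=S\cap N_{r}(A)$; then $T\csubset A$, so $T\in \cC$, again contradicting $S\in \cC^{*}$. To establish $\cH^{*}\subset \cC$, suppose $T\in \cH^{*}$ and that $T\ccap B\csubset A$ fails; then for some $r$ the set $T\cap N_{r}(B)$ is not coarsely contained in $A$, which allows us to extract a sequence of distinct points $x_{i}\in T\cap N_{r}(B)$ with $d(x_{i},A)\to \infty$. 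The set $S=\{x_{i}\}$ lies in $N_{r}(B)$ and meets each $N_{s}(A)$ in only finitely many points, so $S\in \cH$, while $S\cap T=S$ is infinite.

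No single step is conceptually deep, but the one place needing a little care is the construction of the witness sequences in the contradiction arguments: we must produce points escaping to infinity in one direction (away from $A$ or from $B$) while staying controlled in the other. Once these sequences are arranged with the appropriate two-sided control, the rest is a mechanical unpacking of $\csubset$ and $\ccap$ via the triangle inequality, essentially as packaged in Lemma~\ref{l:nbds}.
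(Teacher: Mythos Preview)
Your proof is correct and follows essentially the same approach as the paper. Both arguments establish the easy inclusions $\cH\subset \cC^{*}$ and $\cC\subset \cH^{*}$ by showing that $S\cap T$ is finite whenever $S\in\cH$ and $T\in\cC$ (the paper phrases this as ``any set in both families is finite'', using closure under subsets), and both prove the reverse inclusions by producing witness sets: a sequence escaping $B$ or a subset of $N_{r}(A)$ to show $\cC^{*}\subset\cH$, and a sequence in $T\cap N_{r}(B)$ escaping $A$ to show $\cH^{*}\subset\cC$. Your explicit sequences are slightly more concrete than the paper's description (``an infinite locally finite subset coarsely disjoint from $A$ and coarsely contained in $B$''), but the content is identical.
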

\begin{proof}
    If $Y$ belongs to both families, then $Y \csubset B$, by the first condition of $\cH$.
    Then $Y \ceq Y \ccap B$, so $Y \csubset A$, by definition of $\cC$.
    So $Y$ has to be finite by the second condition of $\cH$.
    This implies that if $S \in \cH$ and $T \in \cC$, then $S \cap T$ is finite, which means $\cH^{*} \supset \cC$ and $\cC^{*} \supset \cH$.

    Now let $Y\in \cC^{*} $.
    Since $\cC$ contains all subsets which are coarsely disjoint from $B$, $Y$ cannot contain an infinite subset which is coarsely disjoint from $B$.
    This implies that $Y \csubset B$.
    Since $\cC$ also contains all metric neighborhoods of $A$, $Y$ must also satisfy the second condition of $\cH$, so $Y \in \cH$.
    Thus $\cC^{*} \subset \cH$.

    Finally, if $Y \notin \cC$, then $Y$ contains an infinite locally finite subset which is coarsely disjoint form $A$ and coarsely contained in $B$.
    This subset is in $\cH$, so $Y \notin \cH^{*}$.
    Thus $\cH^{*} \subset \cC$.
\end{proof}

From now on we will always assume that the cardinality of our space $X$ is less than the first $\go$-measurable cardinal.
\begin{Lemma}\label{l:dual1}
    The evaluation map $e$ of cochains on chains induces isomorphisms of (co)chain complexes:
    \begin{align*}
        \cx(B-A \subset X) &= \Hom(\Cx(B-A \subset X), \zz) , \\
        \Cx(B-A \subset X) &= \Hom(\cx(B-A \subset X), \zz). \\
    \end{align*}
\end{Lemma}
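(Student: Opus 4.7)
The plan is to apply the algebraic duality of Lemma~\ref{l:slender} with $R = \zz$ (slender by Lemma~\ref{l:z}) and, at each simplicial degree $n$, index set $I = X^{n+1}$, using the dual supportive families produced by Lemma~\ref{l:dual0} applied to $X^{n+1}$ (with sup metric) relative to the diagonal subsets $\gD_B$ and $\gD_A$. Since $|X^{n+1}| = |X|$ lies below the first $\omega$-measurable cardinal by our hypothesis on $X$, the hypotheses of Lemma~\ref{l:slender} are met.

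Concretely, put
\begin{align*}
    \cH_n &= \{S \subset X^{n+1} \mid S \csubset \gD_B \text{ and } \#(S \cap N_r(\gD_A)) < \infty \text{ for all } r\}, \\
    \cC_n &= \{T \subset X^{n+1} \mid T \ccap \gD_B \csubset \gD_A\}.
\end{align*}
By Lemma~\ref{l:dual0} these are dual supportive families that cover $X^{n+1}$. The cochain identification $\Cx[n](B-A\subset X) = M_{\cC_n}$ is immediate from the definition. For chains, the aim is $\cx[n](B-A\subset X) = M_{\cH_n}$. One direction is easy: given $c \in \cx[n](B-A\subset X)$, the coarse disjointness $\Supp{c} \ccap \gD_A \ceq *$ makes $\Supp{c} \cap N_r(\gD_A)$ bounded in $X^{n+1}$, hence finite by local finiteness. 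The reverse direction is the key technical step: starting from $\Supp{c} \in \cH_n$, one must recover local finiteness of $c$ as an element of $\clf[n](X)$. Fixing $R_0$ with $\Supp{c} \subset N_{R_0}(\gD_B)$ and $a_0 \in A$, for any bounded $K \subset X$ any $\sigma \in \Supp{c}$ with a vertex in $K$ lies within $R_0$ of some $b \in B \cap N_{R_0}(K)$, itself a bounded subset of $X$; applying the triangle inequality with $a_0$ places $\sigma$ in $N_{R_0 + M}(\gD_A)$ with $M = \diam\bigl(B \cap N_{R_0}(K) \cup \{a_0\}\bigr)$, so the $\cH_n$-hypothesis cuts these down to finitely many. (The case $A = \emptyset$ is handled by the paper's convention of adjoining a point to $A$.)

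With these identifications in place, Lemma~\ref{l:dual0} gives $\cH_n^* = \cC_n$ and $\cC_n^* = \cH_n$, and Lemma~\ref{l:slender} promotes these to evaluation isomorphisms $M_{\cC_n} \cong \Hom(M_{\cH_n}, \zz)$ and $M_{\cH_n} \cong \Hom(M_{\cC_n}, \zz)$. The Alexander--Spanier coboundary on $\C(X)$ is by construction dual to the boundary on $\cf(X)$ under the evaluation pairing, so these degree-wise isomorphisms automatically commute with (co)boundaries and assemble into chain complex isomorphisms. The main obstacle is the metric argument identifying $\cx[n](B-A\subset X)$ with $M_{\cH_n}$; once that is in place, the rest is a direct invocation of Lemmas~\ref{l:dual0}, \ref{l:slender}, and \ref{l:z}.
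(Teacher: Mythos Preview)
Your proposal is correct and follows essentially the same approach as the paper: apply Lemma~\ref{l:dual0} with the ambient space $X^{n+1}$ and the subsets $\gD_B$, $\gD_A$, identify $M_{\cH_n}$ and $M_{\cC_n}$ with the chain and cochain groups, and then invoke Lemma~\ref{l:slender}. The paper's proof simply asserts that $\cH$ is the collection of supports of chains in $\cx(B-A\subset X)$, whereas you spell out the nontrivial direction (recovering local finiteness from the $\cH_n$-condition via the triangle inequality through a basepoint in $A$); this extra detail is helpful and correct, but the overall strategy is identical.
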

\begin{proof}
    Suppose $R=\zz$, $X=X^{n+1}$, and $B=\gD_B$ and $A=\gD_A$.
    Then the corresponding $\cH$ is the collection of supports of chains in $\cx(B-A\subset X)$.
    It follows that $M_\cH=\cx(B-A\subset X)$.
    For a similar reason, $M_\cC=\Cx(B-A\subset X))$.
    So, we need to show that $M_{\cH}=\Hom(M_{\cC}, \zz)$ and $M_{\cC}=\Hom(M_{\cH}, \zz)$.

    Lemma~\ref{l:slender} gives $M_{\cC^*}=\Hom(M_{\cC}, \zz)$ and Lemma~\ref{l:dual0} gives $\cC^*=\cH$.
    Combining these, we get $M_{\cH}=\Hom(M_{\cC}, \zz)$.
    Similarly, we can show that $M_{\cC}=\Hom(M_{\cH}, \zz)$.
    Finally, it is clear that the evaluation map commutes with the differentials.
\end{proof}
\begin{bibdiv}
    \begin{biblist}

\bib{em02}{book}{
    author = {Eklof, Paul~C.},
    author = {Mekler, Alan~H.},
    title = {Almost free modules}, edition={Revised},
    series = {North-Holland Mathematical Library}, publisher={North-Holland Publishing Co., Amsterdam},
    date = {2002},
    volume = {65}, ISBN={0-444-50492-3},
    url = {https://mathscinet.ams.org/mathscinet-getitem?mr=1914985}, note={Set-theoretic methods},
    review = {\MR{1914985}}, }

\bib{h10}{thesis}{
    author = {Hair, Steven},
    title = {Homological methods in coarse geometry}, type={Ph.D. Thesis},
    date = {2010}, }

\bib{kk05}{article}{
    author = {Kapovich, Michael},
    author = {Kleiner, Bruce},
    title = {Coarse {A}lexander duality and duality groups},
    date = {2005},
    issn = {0022-040X},
    journal = {J. Differential Geom.},
    volume = {69},
    number = {2},
    pages = {279\ndash 352},
    url = {http://projecteuclid.org/euclid.jdg/1121449108},
    review = {\MR{2168506 (2007c:57033)}}, }

\bib{m18}{article}{
    author = {Margolis, Alexander~J.},
    title = {Quasi-isometry invariance of group splittings over coarse {P}oincar\'{e} duality groups},
    date = {2018},
    issn = {0024-6115},
    journal = {Proc. Lond. Math. Soc. (3)},
    volume = {116},
    number = {6},
    pages = {1406\ndash 1456},
    url = {https://mathscinet.ams.org/mathscinet-getitem?mr=3816385},
    review = {\MR{3816385}}, }

\bib{m78b}{book}{
    author = {Massey, William~S.},
    title = {Homology and cohomology theory}, publisher={Marcel Dekker, Inc., New York-Basel},
    date = {1978}, ISBN={0-8247-6662-8}, note={An approach based on Alexander-Spanier cochains, Monographs and Textbooks in Pure and Applied Mathematics, Vol. 46},
    review = {\MR{0488016 (58 \#7594)}}, }

\bib{m77}{article}{
    author = {McCrory, Clint},
    title = {A characterization of homology manifolds},
    date = {1977},
    issn = {0024-6107},
    journal = {J. London Math. Soc. (2)},
    volume = {16},
    number = {1},
    pages = {149\ndash 159},
    url = {https://mathscinet.ams.org/mathscinet-getitem?mr=445506},
    review = {\MR{445506}}, }

\bib{msw11}{article}{
    author = {Mosher, Lee},
    author = {Sageev, Michah},
    author = {Whyte, Kevin},
    title = {Quasi-actions on trees {II}: {F}inite depth {B}ass-{S}erre trees},
    date = {2011},
    issn = {0065-9266},
    journal = {Mem. Amer. Math. Soc.},
    volume = {214},
    number = {1008},
    pages = {vi+105},
    url = {http://dx.doi.org/10.1090/S0065-9266-2011-00585-X},
    review = {\MR{2867450}}, }

\bib{p07}{article}{
    author = {Papasoglu, Panos},
    title = {Group splittings and asymptotic topology},
    date = {2007},
    issn = {0075-4102},
    journal = {J. Reine Angew. Math.},
    volume = {602},
    pages = {1\ndash 16},
    url = {https://doi.org/10.1515/CRELLE.2007.001},
    review = {\MR{2300450}}, }

\bib{r93}{article}{
    author = {Roe, John},
    title = {Coarse cohomology and index theory on complete {R}iemannian manifolds},
    date = {1993},
    issn = {0065-9266},
    journal = {Mem. Amer. Math. Soc.},
    volume = {104},
    number = {497},
    pages = {x+90},
    url = {http://dx.doi.org/10.1090/memo/0497},
    review = {\MR{1147350 (94a:58193)}}, }

\bib{r96}{book}{
    author = {Roe, John},
    title = {Index theory, coarse geometry, and topology of manifolds},
    series = {CBMS Regional Conference Series in Mathematics}, publisher={American Mathematical Society, Providence, RI},
    date = {1996},
    volume = {90}, ISBN={0-8218-0413-8},
    url = {https://doi.org/10.1090/cbms/090},
    review = {\MR{1399087}}, }

\bib{r03}{book}{
    author = {Roe, John},
    title = {Lectures on coarse geometry},
    series = {University Lecture Series}, publisher={American Mathematical Society, Providence, RI},
    date = {2003},
    volume = {31}, ISBN={0-8218-3332-4},
    review = {\MR{2007488 (2004g:53050)}}, }

\bib{s81a}{book}{
    author = {Spanier, Edwin~H.},
    title = {Algebraic topology}, publisher={Springer-Verlag, New York-Berlin},
    date = {1981}, ISBN={0-387-90646-0}, note={Corrected reprint},
    review = {\MR{666554 (83i:55001)}}, }

\end{biblist}

\end{bibdiv}

\end{document}